\numberwithin{equation}{section}
\newtheorem{corollary}[equation]{Corollary}
\newtheorem*{corollary*}{Corollary}
\newtheorem{lemma}[equation]{Lemma}
\newtheorem*{lemma*}{Lemma}
\newtheorem{proposition}[equation]{Proposition}
\newtheorem*{proposition*}{Proposition}
\newtheorem{theorem}[equation]{Theorem}
\newtheorem*{theorem*}{Theorem}
\theoremstyle{remark}
\newtheorem*{assume*}{Assume}
\newtheorem*{claim*}{Claim}
\newtheorem{definition}[equation]{Definition}
\newtheorem*{definition*}{Definition}
\newtheorem{example}[equation]{Example}
\newtheorem*{example*}{Example}
\newtheorem*{hint*}{Hint}
\newtheorem*{notation*}{Notation}
\newtheorem*{question*}{Question}
\newtheorem*{answer*}{Answer}
\newtheorem{remark}[equation]{Remark}
\newtheorem*{remark*}{Remark}
\numberwithin{HWeq}{section}
\theoremstyle{definition}
\def\a{\alpha}
\def\b{\beta}
\def\c{\gamma}
\def\d{\delta}
\def\e{\varepsilon}
\def\z{\zeta}
\def\m{\mu}
\def\n{\nu}
\def\s{\sigma}
\def\t{\tau}
\def\O{\Omega}
\def\w{\omega}
\def\bara{{\bar\alpha}}
\def\barb{{\bar\beta}}
\def\barc{{\bar\gamma}}
\def\bare{{\bar\varepsilon}}
\def\barO{{\bar\Omega}}
\def\bC{\mathbb C}
\def\bP{\mathbb P}
\def\bR{\mathbb R}
\def\bZ{\mathbb Z}
\def\cF{\mathcal F}
\def\cG{\mathcal G}
\def\cL{\mathcal L}
\def\cP{\mathcal P}
\def\fg{{\mathfrak{g}}}
\def\fgl{\mathfrak{gl}}
\def\fsu{\mathfrak{su}}
\def\sfa{\mathsf{a}}
\def\sfb{\mathsf{b}}
\def\sfb{\mathsf{b}}
\def\sfc{\mathsf{c}}
\def\td{\mathrm{d}}
\def\sfe{\mathsf{e}}
\def\sff{\mathsf{f}}
\def\tGL{\mathrm{GL}}
\def\tGr{\mathrm{Gr}}
\def\ti{\mathrm{i}}
\def\sfi{\mathsf{i}}
\def\tId{\mathrm{Id}}
\def\tIm{\mathrm{Im}}
\def\sfJ{\mathsf{J}}
\def\tmod{\mathrm{mod}}
\def\tspan{\mathrm{span}}
\def\tSU{\mathrm{SU}}
\def\tSym{\mathrm{Sym}}
\def\sfw{\mathsf{w}}
\def\sfz{\mathsf{z}}
\def\del{\partial}
\def\barz{\bar z}
\def\ot{\otimes}
\def\wt{\widetilde}
\def\wh{\widehat}
\def\half{\tfrac{1}{2}}
\def\ihalf{\tfrac{\ti}{2}}
\def\ifourth{\tfrac{\sfi}{4}}
\def\fourth{\tfrac{1}{4}}
\newcounter{cnt}
\newcounter{exam_cnt}
\begin{document}
\title{Projective invariants of CR--hypersurfaces}
\author{C. Hammond}
\email{hammondc@math.tamu.edu}
\author[C. Robles]{C. Robles${}^\ast$}
\thanks{${}^\ast$Robles is partially supported by NSF-DMS 0805782 \& 1006353.}
\email{robles@math.tamu.edu}
\address{Mathematics Department, Mail-stop 3368, Texas A\&M University, College Station, TX  77843-3368}
\date{\today}
\begin{abstract}
We study the equivalence problem under projective transformation for CR-hypersurfaces of complex projective space.  A complete set of projective differential invariants for analytic hypersurfaces is given.  The self-dual strongly $\bC$-linearly convex hypersurfaces are characterized.  
\end{abstract}
\keywords{Equivalence problem, CR-hypersurface}
\subjclass[2010]{32V99}
\maketitle

%------------------------------------------------------------------------------
%------------------------------------------------------------------------------

%------------------------------------------------------------------------------
\section{Introduction}
%------------------------------------------------------------------------------

%------------------------------------------------------------------------------
\subsection{History}
%------------------------------------------------------------------------------

Before stating the equivalence problem addressed in this paper, we describe two related problems previously addressed in the literature.

Cartan studied the equivalence of real-analytic CR-hypersurfaces in complex 2-manifolds up to local biholomorphism \cite{MR1553196, MR1556687}.  Chern and Moser generalized Cartan's result \cite{MR0425155, MR709144}, solving the equivalence problem for nondegenerate, real-analytic CR-hypersurfaces in complex $n$-manifolds modulo local biholomorphism \cite[Theorem 4.6]{MR0425155}.  

Following Chern and Moser, Jensen considered the equivalence problem for nondegenerate CR-hypersurfaces of complex projective space $\bP W$ up to (local) projective deformation  \cite{MR0500648, MR836926, MR739891}.  He showed that two nondegenerate CR-hypersurfaces in $\bP W$ are locally first-order projective deformations of each other if and only if they locally biholomorphically equivalent \cite[Theorem 7.2]{MR739891}, and that two smooth CR--hypersurfaces are locally projectively equivalent if and only if they are locally second-order projective deformations of each other \cite{MR836926}.

%------------------------------------------------------------------------------
\subsection{Statement of the problem}
%------------------------------------------------------------------------------

In this paper we apply techniques of E. Cartan \cite{MR1190006, MR0410607} to characterize CR-hypersurfaces in complex projective space up to projective transformation.   While a projective transformation is a local biholomorphism of complex projective space $\bP W$, it is not the case that every local biholomorphism of $\bP W$ is a projective transformation:  in particular we consider the equivalence problem under a smaller transformation group than Cartan and Chern--Moser.  Likewise, Jensen's projective deformation is a more also a more general equivalence than projective congruence.  (See Remark \ref{R:Jensen}.)
\smallskip

Let $W$ be a complex vector space, let $\bP W$ denote the associated complex projective space.  Given $w \in W$, let $[w]$ denote the corresponding point in $\bP W$.  Let $\tGL(W)$ denote the space of complex linear automorphisms of $W$.

\begin{definition} \label{D:proj}
Every $A \in \tGL(W)$ naturally induces a \emph{projective linear transformation} $A : \bP W \to \bP W$ by $[w] \mapsto [Aw]$.  Two submanifolds $S,\, \wt S \in \bP W$ are \emph{projectively equivalent} if there exists a projective linear transformation $A$ such that $A(S) = \wt  S$.
\end{definition}

%------------------------------------------------------------------------------
\subsection{Contents}
%------------------------------------------------------------------------------

The equivalence problem is lifted from CR-hypersurfaces $S \subset \bP W$ to frame bundles $\cF_S \subset  \tGL(W)$ over $S$ in Section \ref{S:F}.  Projective differential invariants (of all orders) are defined in Section \ref{S:h}, and in Section \ref{S:local_comp_w} it is shown that set of invariants is complete (in the analytic category): they characterize the analytic CR-hypersurfaces up to projective equivalence (Proposition \ref{P:hcomplete}).  Modulo normalizations (in Section \ref{S:P_S}), the set of second-order projective invariants is precisely the CR--second fundamental form.  This is illustrated in Section \ref{S:wrtO} which presents the invariants from a slightly different, but sometimes computationally more convenient, perspective.  

The self-dual strongly $\bC$--linearly convex hypersurfaces are characterized in Section \ref{S:SD}, see Theorem \ref{T:SD}.  

%------------------------------------------------------------------------------
\subsection{Notation}
%------------------------------------------------------------------------------
Fix the index ranges
$$
\begin{array}{rclrclrcl}
  0 & \le & j,k \ \le \ n+1 \,, \qquad &
  0 & \le & a,b \ \le \ m \,, \qquad & 
  1 & \le & s,t \ \le \ n \\
  & & &
  0 & < & \a,\b \ < \ m \,, &
  1 & < & \sigma , \tau \ < \ n\, .
\end{array}
$$

Let $V$ be a real vector space of dimension $n+2 = 2m+2$ with a complex structure $\sfJ : V \to V$, $\sfJ^2 = -\tId$.  Give $(V,\sfJ)$ the structure of a complex vector space $W$ by defining $\ti v := \sfJ v$, where $\ti = \sqrt{-1}$.  Define an equivalence relation on $V\backslash\{0\}$ by $v \sim (x v + y \sfJ v)$ for any $(0,0)\not=(x,y) \in\bR^2$.  Then the quotient space is naturally identified with $\bP W \simeq \bC\bP^m$. 

Fix a basis $e = (e_0,\ldots,e_{n+1})$ of $V$ with the property that 
\begin{equation} \label{E:e}
  \sfJ \, e_{2a} \ = \ e_{2a+1} \quad \hbox{ and } \quad
  \sfJ \, e_{2a+1} \ = \ -e_{2a} \quad \forall \ 0 \le a \le m \, .
\end{equation}
The 
$$
  f_a \ := \ e_{2a}\, , \quad 0 \le a \le m \, , 
  %\half ( e_{2a} \,-\, \ti\, e_{2a+1} ) 
$$
form a basis of of the complex vector space $W$.  

\begin{remark} \label{R:evf}
At times we will find it most convenient to work with the frame $e$, while at other times the frame $f$ is better suited to the computation at hand.  For this reason we will develop both perspectives in the sections below.  See also Remark \ref{R:wvO}. 
\end{remark}

Define $\fgl(V,\sfJ) := \{ X \in \fgl(V) \ | \ X \sfJ = \sfJ X\} \simeq \
\fgl(W)$.  Let $X \in \fgl(V)$ be given by a matrix $(X^j_k)$ with respect to the basis $(e_0,\ldots ,e_{n+1})$.  Then $X \in \fgl(V,\sfJ)$ if and only if 
\begin{equation} \label{E:mA}
  X^{2a}_{2b} \ = \ X^{2a+1}_{2b+1} \quad \hbox{ and } \quad 
  X^{2a}_{2b+1} \ = \ -X^{2a+1}_{2b} \, .
\end{equation}
In this case, as an element of $\fgl(W)$, $X$ is given by the matrix $(Y^a_b)$, 
\begin{equation} \label{E:RvC}
  Y^a_b \ = \ X^{2a}_{2b} + \ti X^{2a+1}_{2b}
\end{equation}
with respect to the basis $(f_0 , \ldots , f_m)$ of $W$.

%------------------------------------------------------------------------------
\section{Frame bundles}
%------------------------------------------------------------------------------

%------------------------------------------------------------------------------
\subsection{Frame bundle over \boldmath $\bP W$ \unboldmath} \label{S:F}
%------------------------------------------------------------------------------

Let $\cF$ denote the set of frames (or bases) $e = (e_0,\ldots,e_{n+1})$ of $V$ satisfying \eqref{E:e}.  If we fix a frame, then $\cF$ may be identified with $\tGL(V,\sfJ) \simeq \tGL(W)$.  Given $A \in \tGL(W)$, let $L_A : \tGL(W) \to \tGL(W)$ denote left-multiplication by $A$.  Let $\pi : \tGL(W) \to \bP W$ denote the projection $\pi(e) = [e_0]$.  Then the diagram below commutes.
\begin{center}%\label{F:diagram}
\setlength{\unitlength}{0.8cm}
\begin{picture}(3.5,3.5)(0,0)
  \put(0.2,0){$\bP W$} 
  \put(4,0){$\bP W$}
  \put(1.6,0.1){\vector(1,0){1.8}}
  \put(2.3,0.3){$A$}
  \put(0,2.5){$\tGL(W)$}
  \put(3.8,2.5){$\tGL(W)$}
  \put(1.7,2.6){\vector(1,0){1.8}}
  \put(2.2,2.8){$L_A$}
  \put(0.5,2.2){\vector(0,-1){1.5}} \put(0,1.6){$\pi$}
  \put(4.3,2.2){\vector(0,-1){1.5}} \put(4.5,1.6){$\pi$}
\end{picture}\end{center}

%------------------------------------------------------------------------------
\subsubsection{The Maurer-Cartan form} \label{S:mc}
%------------------------------------------------------------------------------
The $\fgl(V,\sfJ)$--valued Maurer-Cartan 1-form $\w = (\w^j_k)$ on $\cF$ is defined by 
\begin{equation} \label{E:de}
  \td \, e_j \ = \ \w^k_j \, e_k \, .
\end{equation}
It is straight-forward to check that $\w$ is left-invariant:
\begin{equation} \label{E:Linvar}
  L_A^* \w \ = \ \w \, ,
\end{equation}
for all $A \in \tGL(W)$.  We also have the \emph{Maurer-Cartan equation}
\begin{equation} \label{E:mcR}
  \td \w^j_k \ = \ -\w^j_\ell \wedge \w^\ell_k \, ,
\end{equation}
and, by \eqref{E:e}, 
\begin{equation}\label{E:w}
  \w^{2a}_{2b} \ = \ \w^{2a+1}_{2b+1} \quad \hbox{ and } \quad  
  \w^{2a}_{2b+1} \ = \ -\w^{2a+1}_{2b} \, . 
\end{equation}

If
\begin{equation} \label{E:W=(w)}
  \Omega^b_a \ := \ \w^{2b}_{2a} \ + \ \ti \, \w^{2b+1}_{2a} \, , 
\end{equation}
then 
\begin{equation} \label{E:df}
  \td f_a \ = \ \Omega^b_a f_b
\end{equation}
and we have the \emph{Maurer-Cartan equation}
\begin{equation} \label{E:mcC}
\td \Omega^b_a \ = \ -\Omega^b_c \wedge \Omega^c_a \, .
\end{equation}
In particular, $\Omega = (\Omega^a_b)$ is the $\fgl(W)$--valued Maurer-Cartan form.  In analogy with \eqref{E:Linvar} we have
\begin{equation} \label{E:LinvarO}
  L_A^* \Omega \ = \ \Omega \, .
\end{equation}
For later convenience we note that 
\begin{equation} \label{E:w=(W)}
  \w^{2b}_{2a} \ = \ \half( \Omega^b_a + \bar\Omega^b_a ) \, , \qquad
  \w^{2b+1}_{2a} \ = \ -\ihalf ( \Omega^b_a - \bar\Omega^b_a ) \, .
\end{equation}

\begin{remark} \label{R:wvO}
As a follow up to Remark \ref{R:evf} we note that generally the $\fgl(W)$--valued Maurer-Cartan form $\Omega$ is more convenient in computations.  However, we found that the $\fgl(V,\sfJ)$--valued $\w$ has the advantage of yielding a general formula for the $k$-th order differential invariants (Proposition \ref{P:h}).  So we will continue to work with both, favoring the more convenient form for the computation at hand, and often giving equivalent statements for each.
\end{remark}

%------------------------------------------------------------------------------
\subsubsection{Change of frame} \label{S:tw}
%------------------------------------------------------------------------------
It will be necessary to understand how $\Omega$ varies under a change of frame.  Consider a smooth map $g = (g^a_b) : \cF \to \tGL_{m+1}\bC$.  Let $\tilde f_a = g_a^b(f) f_b$.  This map induces $G : \cF \to \cF$ by mapping the frame $f = (f_a)$ to the frame $G(f) = (g^b_a(f)f_b) = \tilde f$.  Let $\wt\O$ denote the pull-back $G^*(\Omega_{\tilde f})$.  Then \eqref{E:df} yields 
\begin{equation} \label{E:tO}
  \wt\O^b_a \ = \ 
  (g^{-1})^b_c \,\td g^c_a \ + \ (g^{-1})^b_c \, \Omega^c_d \, g^d_a \, .
\end{equation}

%------------------------------------------------------------------------------
\subsection{Frame bundle over a CR--hypersurface in \boldmath $\bP W$ \unboldmath}
\label{S:F_S}
%------------------------------------------------------------------------------

Let $S \subset \bP W$ be a CR--hypersurface.  Let $\wh S := \{ v \in V\backslash\{0\} \ | \ [v] \in S \}$ denote the cone over $S$.  Then $\wh S$ is a CR--hypersurface in $(V,\sfJ)$.  If $z = [e_0] \in S$, let $\wh T_z S \subset V$ be the $(n+1)$--plane tangent to $\wh S$ at $e_0$.  (We distinguish the linear subspace $\wh T_zS$ from the intrinsic tangent space $T_{e_0} \wh S$.)  Define 
$$
  \cF_S \ := \ \{ (e_0,\ldots,e_{n+1}) \in \cF \ | \ z = [e_0] \in S \, , \
  \tspan\{e_0,\ldots,e_n\} = \wh T_z S \} \, .
$$

\begin{remark} \label{R:LAFS}
Given $A \in \tGL(W)$, notice that $L_A(\cF_S) = \cF_{\wt S}$ where $\wt S = 
A(S) = \pi \circ L_A(\cF_S)$.  In particular, two CR-hypersurfaces $S , \wt S \subset \bP W$ are projectively equivalent if and only if there exists $A \in \tGL(W)$ such that $\cF_{\wt S} = L_A(\cF_S)$.
\end{remark}  

\noindent The key ingredient in the solution of the projective equivalence problem is the following.

\begin{proposition} \label{P:proj_equiv}
Two CR-hypersurface $S , \wt S \subset \bP W$ are projectively equivalent if and only if there exists a smooth map $\phi : \cF_S \to \cF_{\wt S}$ such that $\phi^*(\Omega_{|\cF_{\wt S}}) = \Omega_{|\cF_S}$.  (Equivalently, $\phi^*(\omega_{|\cF_{\wt S}}) = \omega_{|\cF_S}$.)
\end{proposition}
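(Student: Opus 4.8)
The plan is to prove the two implications separately. The forward direction is essentially a restatement of the left-invariance of $\Omega$ (and of $\omega$); the reverse direction is the substantive one, and it rests on the classical rigidity of maps into a Lie group that pull the Maurer--Cartan form back to a prescribed form.

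\emph{($\Rightarrow$)} Assume $S$ and $\wt S$ are projectively equivalent. By Remark~\ref{R:LAFS} there is $A \in \tGL(W)$ with $\cF_{\wt S} = L_A(\cF_S)$, so $\phi := L_A|_{\cF_S}:\cF_S\to\cF_{\wt S}$ is a well-defined smooth map. Since the restriction of a form to a submanifold is its pullback under the inclusion, and since $L_A^*\Omega=\Omega$ by \eqref{E:LinvarO}, we get $\phi^*(\Omega_{|\cF_{\wt S}}) = \bigl(L_A|_{\cF_S}\bigr)^*\bigl(\Omega_{|\cF_{\wt S}}\bigr) = (L_A^*\Omega)_{|\cF_S} = \Omega_{|\cF_S}$; the identity for $\omega$ follows in the same way from \eqref{E:Linvar}.

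\emph{($\Leftarrow$)} First note that the two hypotheses $\phi^*(\Omega_{|\cF_{\wt S}}) = \Omega_{|\cF_S}$ and $\phi^*(\omega_{|\cF_{\wt S}}) = \omega_{|\cF_S}$ are equivalent for any smooth $\phi$: the components of $\Omega$ and $\omega$ are obtained from each other by the $\bR$-linear, conjugation-involving formulas \eqref{E:W=(w)} and \eqref{E:w=(W)} (together with \eqref{E:w}), and $\phi^*$ commutes with these operations. So it suffices to work with $\Omega$. Fix a base frame to identify $\cF$ with $\tGL(W)$; write $\iota:\cF_S\hookrightarrow\tGL(W)$ for the inclusion and $\tilde\phi:\cF_S\to\tGL(W)$ for the composite of $\phi$ with the inclusion $\cF_{\wt S}\hookrightarrow\tGL(W)$, so the hypothesis says $\tilde\phi^*\Omega = \iota^*\Omega$. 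By \eqref{E:df}, in matrix notation $\Omega$ is the left-invariant form $g^{-1}\td g$, so writing $p$ for a point of $\cF_S$ regarded in $\tGL(W)$ the hypothesis reads $\tilde\phi(p)^{-1}\td\tilde\phi = p^{-1}\td p$. Now consider $\Psi:\cF_S\to\tGL(W)$, $\Psi(p):=\tilde\phi(p)\,p^{-1}$. Differentiating with the product and inverse rules,
$$
  \td\Psi \ = \ (\td\tilde\phi)\,p^{-1} \ - \ \tilde\phi\,p^{-1}(\td p)\,p^{-1}
  \ = \ \tilde\phi\bigl[\,\tilde\phi^{-1}\td\tilde\phi \ - \ p^{-1}\td p\,\bigr]\,p^{-1} \ = \ 0 \, .
$$
Hence $\Psi$ is locally constant, and since $\cF_S$ is connected it is constant, say $\Psi\equiv A\in\tGL(W)$; that is, $\tilde\phi(p)=Ap$ for all $p$, so $\phi = L_A|_{\cF_S}$. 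Therefore $L_A(\cF_S)=\phi(\cF_S)\subseteq\cF_{\wt S}$, which by the first part of Remark~\ref{R:LAFS} reads $\cF_{A(S)}\subseteq\cF_{\wt S}$; applying $\pi$ gives $A(S)\subseteq\wt S$, and when $\phi$ is onto (the case of interest) these frame bundles, hence $A(S)$ and $\wt S$, coincide. Thus $S$ and $\wt S$ are projectively equivalent.

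The only genuine computation is the product-rule identity for $\td\Psi$, which is routine. The point that needs care is the passage from $L_A(\cF_S)\subseteq\cF_{\wt S}$ to the equality $A(S)=\wt S$: this uses connectedness of $\cF_S$ (to force $\Psi$ constant) and, for the equality rather than a containment, surjectivity of $\phi$ or, failing that, connectedness of the hypersurfaces together with the observation that $A(S)$ is relatively open in $\wt S$ — without such an hypothesis one concludes only that $S$ is projectively equivalent to an open subset of $\wt S$.
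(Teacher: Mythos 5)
Your proof is correct and takes essentially the same route as the paper: the reverse implication is exactly the uniqueness clause of Cartan's Theorem (Theorem \ref{T:Cartan}), which the paper invokes with $f_1$ the inclusion and $f_2 = \phi$, and which your computation $\td\bigl(\tilde\phi(p)\,p^{-1}\bigr)=0$ simply re-proves directly in the matrix-group setting. Your closing caveat about connectedness of $\cF_S$ and surjectivity of $\phi$ (needed to upgrade $A(S)\subseteq\wt S$ to equality) concerns a point the paper's one-line proof also glosses over, so flagging it is sensible rather than a flaw.
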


\noindent The proof of the proposition makes use of the following well-known theorem of E. Cartan.

\begin{theorem}[Cartan {\cite[Theorem 1.6.10]{MR2003610}}] \label{T:Cartan}
let $G$ be a Lie group with Lie algebra $\fg$ and Maurer-Cartan form $\vartheta$.  Let $M$ be a manifold with a $\fg$--valued 1-form $\eta$ satisfying the Maurer-Cartan equation $\td \eta = -[\eta,\eta]$.  Then for any $p \in M$ there exists a neighborhood $U$ and a map $f : U \to G$ such that $f^*\vartheta = \eta$.  Moreover any two such maps $f_1,f_2 : U \to G$ are related by $f_2 = L_g \circ f_1$ for some $g \in G$.
\end{theorem}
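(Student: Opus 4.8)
The plan is to derive both implications from Remark \ref{R:LAFS}---which says that $S$ and $\wt S$ are projectively equivalent precisely when $\cF_{\wt S} = L_A(\cF_S)$ for some $A \in \tGL(W)$---together with the left-invariance \eqref{E:LinvarO} of $\Omega$ and Cartan's Theorem \ref{T:Cartan}. Write $\iota_S : \cF_S \hookrightarrow \tGL(W)$ for the inclusion, so that $\Omega_{|\cF_S} = \iota_S^*\Omega$, and similarly for $\wt S$. For the forward implication, suppose $S$ and $\wt S$ are projectively equivalent and choose $A$ with $\cF_{\wt S} = L_A(\cF_S)$; then $\phi := L_A$ restricts to a map $\cF_S \to \cF_{\wt S}$ satisfying $\iota_{\wt S}\circ\phi = L_A\circ\iota_S$. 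Hence $\phi^*(\Omega_{|\cF_{\wt S}}) = \phi^*\iota_{\wt S}^*\Omega = \iota_S^* L_A^*\Omega = \iota_S^*\Omega = \Omega_{|\cF_S}$, the penultimate equality being \eqref{E:LinvarO}. This direction is routine.

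For the reverse implication, suppose $\phi : \cF_S \to \cF_{\wt S}$ satisfies $\phi^*(\Omega_{|\cF_{\wt S}}) = \Omega_{|\cF_S}$, and set $\psi := \iota_{\wt S}\circ\phi : \cF_S \to \tGL(W)$. Then $\psi^*\Omega = \phi^*\iota_{\wt S}^*\Omega = \Omega_{|\cF_S} = \iota_S^*\Omega =: \eta$. The $\fgl(W)$-valued $1$-form $\eta$ on $M := \cF_S$ satisfies the Maurer--Cartan equation, since it is the $\iota_S$-pullback of $\Omega$ and pullback commutes with \eqref{E:mcC}. Thus $M$, $\eta$, $G := \tGL(W)$ and its Maurer--Cartan form $\Omega$ meet the hypotheses of Theorem \ref{T:Cartan}, and both $\psi$ and $\iota_S$ pull $\Omega$ back to $\eta$. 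The uniqueness clause of Theorem \ref{T:Cartan} therefore furnishes, about each point of $\cF_S$, a neighborhood $U$ and an element $g_U \in \tGL(W)$ with $\psi|_U = L_{g_U}\circ\iota_S|_U$.

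It remains to globalize. If $L_{g_1}\circ\iota_S$ and $L_{g_2}\circ\iota_S$ agree on a nonempty open set then $g_1 e = g_2 e$ for an invertible frame $e$, so $g_1 = g_2$; hence $g_U$ is locally constant and, $\cF_S$ being connected, equals a single $A \in \tGL(W)$ with $\psi = L_A\circ\iota_S$. In other words $\phi$ is the restriction of $L_A$, so $L_A(\cF_S) = \phi(\cF_S) \subseteq \cF_{\wt S}$; projecting by $\pi$ and invoking Remark \ref{R:LAFS} gives $\cF_{A(S)} = L_A(\cF_S)$ and $A(S)\subseteq\wt S$. Because the restricted Maurer--Cartan form is pointwise injective on $T\cF_S$ (as $\Omega$ is a coframe on $\tGL(W)$), the identity $\phi^*(\Omega_{|\cF_{\wt S}}) = \Omega_{|\cF_S}$ forces $\phi$ to be an immersion, hence a local diffeomorphism between the equidimensional manifolds $\cF_S$ and $\cF_{\wt S}$; in particular $L_A(\cF_S) = \phi(\cF_S)$ is open in $\cF_{\wt S}$, so $A(S)$ is open in $\wt S$. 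For connected $\wt S$ this upgrades $A(S)\subseteq\wt S$ to the equality $A(S)=\wt S$ once $\phi(\cF_S)$ is also closed (automatic when $S$, and hence $A(S)$, is an embedded closed hypersurface), whence $S$ and $\wt S$ are projectively equivalent. Finally, the parenthetical statement follows because \eqref{E:W=(w)}, \eqref{E:w=(W)} and \eqref{E:w} exhibit the components of $\Omega$ and those of $\w$ as invertible real-linear combinations of each other, so $\phi^*\Omega = \Omega$ if and only if $\phi^*\w = \w$.

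I expect the globalization to be the main obstacle: Theorem \ref{T:Cartan} produces the translations $L_{g_U}$ only locally, so one must patch them into a single ambient $A$ and then promote the resulting inclusion $A(S)\subseteq\wt S$ to an equality. This rests on the connectedness of the frame bundles and on the immersion/dimension (and closedness) argument, whereas the appeal to Cartan's theorem itself is immediate once the Maurer--Cartan hypothesis for $\eta$ is checked.
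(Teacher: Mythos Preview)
Your proposal does not prove Theorem~\ref{T:Cartan}; it proves Proposition~\ref{P:proj_equiv}, and in fact \emph{invokes} Theorem~\ref{T:Cartan} as a black box in the reverse implication. Theorem~\ref{T:Cartan} is the classical statement that a $\fg$-valued $1$-form satisfying the Maurer--Cartan equation integrates locally to a map into $G$; its proof (not given in the paper, which merely cites it) proceeds via the Frobenius theorem applied to the distribution on $M\times G$ cut out by $\eta-\vartheta$. None of that is present in your write-up, so read literally as a proof of the stated theorem your argument is circular.

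If, as seems almost certain, you intended this as a proof of Proposition~\ref{P:proj_equiv}, then your approach coincides with the paper's: the paper's one-sentence proof simply applies Theorem~\ref{T:Cartan} with $G=\tGL(W)$, $M=\cF_S$, $\eta=\Omega_{|\cF_S}$, $f_1=\iota_S$, and $f_2=\iota_{\wt S}\circ\phi$. You have filled in substantially more detail, and in particular you correctly isolate the two points the paper suppresses: (i) the patching of the local left-translations $L_{g_U}$ into a single global $A$ (your rigidity-plus-connectedness argument is sound, assuming $\cF_S$ is connected), and (ii) the promotion of $A(S)\subseteq\wt S$ to equality. Your handling of (ii) via an open--closed argument is the right idea but, as you yourself note, requires an auxiliary hypothesis (closedness of $S$, or that $\phi$ be surjective) that neither you nor the paper makes explicit; this is a genuine lacuna shared by both treatments rather than an error on your part.
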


\begin{remark*}
In the case that $\fg$ is a matrix Lie algebra, $\td \eta = -[\eta,\eta]$ is \eqref{E:mcR}.
\end{remark*}

\begin{proof}[Proof of Proposition \ref{P:proj_equiv}]
The proposition follows directly from Theorem \ref{T:Cartan} by taking $G = \tGL(W)$ and $\vartheta = \Omega$ with $M = \cF_S$ and $\eta = \Omega_{|\cF_S}$, $f_1$ the inclusion map and $f_2 = \phi$.
\end{proof}

Consider the pull-back of the Maurer-Cartan form to $\cF_S$.  By construction $\{\td e_0(\xi) \ | \ \xi \in T_v\cF_S\} =  \tspan\{ e_0 , \ldots , e_n\}$ so that the $\w^0_0 , \w^1_0 , \ldots , \w^n_0$ are linearly independent on $\cF_S$, and 
\begin{equation}\label{E:w^n+1_0}
  \w^{n+1}_0 \ = \ 0 \qquad \stackrel{\eqref{E:w=(W)}}{\Longleftrightarrow} \qquad
  \Omega^m_0 \ - \ \bar\Omega^m_0 \ = \ 0 \, .
\end{equation}  

\begin{definition*}
A tangent vector $v \in T\cF_S$ is \emph{vertical} if $\pi_*(v) = 0$, where $\pi : \cF_S \to S$ is the natural projection.  A 1-form $\varrho$ on $\cF_S$ is \emph{semi-basic} if $\varrho(v) = 0$ for all vertical $v \in T\cF_S$.  
\end{definition*}

\begin{remark} \label{R:sb}
The equation $\td e_0 = \w^j_0 e_j$ implies that the linearly independent $\w^2_0 , \ldots , \w^n_0$ span the semi-basic 1-forms.  Equivalently, 1-forms $\Omega^\a_0$, $\bar\Omega^\a_0$ and $\Omega^m_0$ are linearly independent on $\cF_S$ and span the semi-basic 1-forms (as a real vector space).
\end{remark}

%------------------------------------------------------------------------------
\section{Projective differential invariants w.r.t $\w$}
%------------------------------------------------------------------------------

%------------------------------------------------------------------------------
\subsection{Definition and construction} \label{S:h}
%------------------------------------------------------------------------------

An application of the Maurer-Cartan equation \eqref{E:mcR} to \eqref{E:w^n+1_0} yields 
\begin{equation} \label{E:dw} \renewcommand{\arraystretch}{1.5}
\begin{array}{rcl}
  0 & = & -\td \w^{n+1}_0 \ = \ \w^{n+1}_1 \wedge \w^1_0 
  \ + \ \w^{n+1}_2 \wedge \w^2_0 \ + \ \cdots 
  \ + \ \w^{n+1}_n \wedge \w^n_0 \, .
\end{array}\end{equation}
Recall that the $\w^1_0,\ldots , \w^n_0$ are linearly independent on $\cF_S$, by Remark \ref{R:sb}.  The following lemma is well-known.

\begin{lemma}[Cartan's Lemma {\cite{MR1083148,MR2003610}} for $\w$] \label{L:Cartan4w}
Suppose there exist real-valued 1-forms $\xi_1 , \ldots , \xi_n$ on $\cF_S$ such that $\xi_s \wedge \w^s_0 = 0$.  Then there exist functions $H_{st} = H_{ts} : \cF_S \to \bR$, $1 \le s,t \le n$, such that 
$$
  \xi_s \ = \ H_{st}\,\w^t_0 \, .
$$
\end{lemma}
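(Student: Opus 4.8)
The plan is to reduce the statement to pointwise multilinear algebra in the exterior algebra of $\cF_S$ and then observe that the coefficients produced vary smoothly with the point. Throughout I keep the summation convention of the paper, so that (as in \eqref{E:dw}) the hypothesis $\xi_s\wedge\w^s_0=0$ stands for the single relation $\sum_{s=1}^n\xi_s\wedge\w^s_0=0$.

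First I would use that the real $1$-forms $\w^1_0,\ldots,\w^n_0$ are pointwise linearly independent on $\cF_S$ (Remark \ref{R:sb}) to complete them, on a neighborhood of an arbitrary point of $\cF_S$, to a smooth local coframe $\{\w^1_0,\ldots,\w^n_0,\pi^1,\ldots,\pi^N\}$, where $n+N=\dim_\bR\cF_S$. Expanding each $\xi_s$ in this coframe produces smooth real-valued functions $H_{st}$ and $K_{s\mu}$ with
\begin{equation*}
  \xi_s \ = \ H_{st}\,\w^t_0 \ + \ K_{s\mu}\,\pi^\mu \, .
\end{equation*}
Substituting into $\xi_s\wedge\w^s_0=0$ gives
\begin{equation*}
  H_{st}\,\w^t_0\wedge\w^s_0 \ + \ K_{s\mu}\,\w^s_0\wedge\pi^\mu \ = \ 0 \, .
\end{equation*}
The $2$-forms $\w^t_0\wedge\w^s_0$ (for $1\le t<s\le n$) and $\w^s_0\wedge\pi^\mu$ occur among the basis of $\bw^2 T^*\cF_S$ determined by the chosen coframe, hence are linearly independent; comparing the coefficients of the $\w^s_0\wedge\pi^\mu$ forces $K_{s\mu}=0$ for all $s,\mu$. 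Thus $\xi_s=H_{st}\,\w^t_0$, an identity that now holds globally on $\cF_S$, with the $H_{st}$ uniquely (hence smoothly) determined by the linear independence of the $\w^t_0$.

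It then remains to establish the symmetry $H_{st}=H_{ts}$. I would decompose $H_{st}$ into its symmetric and skew-symmetric parts; wedging with $\w^t_0\wedge\w^s_0$ annihilates the symmetric part, because $\w^t_0\wedge\w^s_0=-\w^s_0\wedge\w^t_0$, so the surviving identity $H_{st}\,\w^t_0\wedge\w^s_0=0$ becomes a relation among the skew part alone, namely $\sum_{t<s}(H_{st}-H_{ts})\,\w^t_0\wedge\w^s_0=0$. Linear independence of the $2$-forms $\w^t_0\wedge\w^s_0$ with $t<s$ then gives $H_{st}=H_{ts}$ for all $s,t$, completing the proof. The argument has no genuine obstacle; the only point needing a word of care is that the coframe completion, and therefore the functions $H_{st}$ and $K_{s\mu}$, can be chosen smoothly on a neighborhood of each point, after which uniqueness of the $H_{st}$ upgrades the local conclusion to the global statement of the lemma.
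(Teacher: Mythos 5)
Your proof is correct: it is the standard coframe-completion argument for Cartan's Lemma (expand $\xi_s$ in a local coframe extending the linearly independent $\w^t_0$, kill the transverse coefficients by comparing basis $2$-forms, then extract symmetry of $H_{st}$ from the vanishing of the skew part), and the paper itself offers no proof, simply citing the lemma as well-known from the references \cite{MR1083148,MR2003610}, where essentially this argument appears. The only cosmetic point is the phrase ``wedging with $\w^t_0\wedge\w^s_0$ annihilates the symmetric part''; what you mean, and what your displayed identity correctly uses, is that the symmetric part of $H_{st}$ contributes zero to the sum $H_{st}\,\w^t_0\wedge\w^s_0$ by antisymmetry of the wedge.
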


Lemma \ref{L:Cartan4w} and \eqref{E:dw} imply there exist functions $h_{st} = h_{ts} : \cF_S \to \bR$, $1 \le s,t \le n$, such that 
\begin{equation} \label{E:h2}
  \w^{n+1}_s \ = \ h_{st} \, \w^t_0 \, , \ 1 \le s \le n  \,.
\end{equation}
Note that $\w^n_0 \, \stackrel{\eqref{E:w}}{=} \, \w^{n+1}_1 \, = \, h_{1s} \, \w^s_0$.  This forces
\begin{equation} \label{E:h2_id}
  h_{1s} \ = \ h_{s1} \ = \ 0 \,, \ \forall \ s < n \,,\quad \hbox{ and } \quad
  h_{1n} \ = \ h_{n1} \ = \ 1 \, .
\end{equation}

The $h^{(2)} = \{h_{st}\}$ are second-order projective differential invariants: they describe the second-order differential geometry of $S$ at $z = [e_0]$;  see Section \ref{S:local_comp_w} for more detail.  The $\{h_{\sigma\tau}\} \subset h^{(2)}$ are the coefficients of the CR second fundamental form; see Section \ref{S:F2}.

We repeat the process above to obtain third-order invariants: apply the Maurer-Cartan equation \eqref{E:mcR} to differentiate \eqref{E:h2}.  An application of Cartan's Lemma \ref{L:Cartan4w} to the resulting 2-form yields functions $h_{rst} : \cF_S \to \bR$ that are fully symmetric in the indices such that 
\begin{equation} \label{E:h3}
  \td h_{rs} \ + \ h_{rs}\,(\w^0_0 + \w^{n+1}_{n+1}) \ - \ 
  h_{rt}\,\w^t_s \ - \ h_{ts}\,\w^t_r \ = \ 
  h_{rst}\,\w^t_0 \, .
\end{equation}

The together the coefficients $h^{(2)} = \{h_{st}\}$ and $h^{(3)} = \{h_{rst}\}$ describe the geometry of $S$ to third order.  See \S\ref{S:local_comp_w}.

Continuing inductively one may deduce a general formula (Proposition \ref{P:h}) for the $(p+1)$-st order invariants $h_{s_1\cdots s_p t}$.   First some notation:  given two tensors $T_{s_1 \ldots s_p}$ and $U_{s_{p+1} \ldots s_{p+q}}$, let $S_{p+q}$ denote the symmetric group on $p+q$ letters.  Let
$$
  T_{(s_1 \ldots s_p} U_{s_{p+1} \ldots s_{p+q})} \ = \ 
  \frac{1}{(p+q)!} \sum_{\sigma\in S_{p+q}} \, 
  T_{\sigma(a_1) \ldots \sigma(s_p)} U_{\sigma(s_{p+1}) \ldots \sigma(s_{p+q})}
$$ 
denote the \emph{symmetrization} of their product.  For example, $T_{(s_1} U_{s_2)} = \half( T_{s_1} U_{s_2} + T_{s_2} U_{s_1})$.  We exclude from the symmetrization operation any index that is outside the parentheses.  For example, in $h_{t(s_1 \ldots s_{p-1}} \, \w^t_{s_p)}$ we symmetrize over only the $s_i$, excluding the $t$ index. 

\begin{proposition} \label{P:h}
Set $h_s = 0$ and assume $p>1$.  There exist functions $h_{s_1\cdots s_p t} : \cF_S \to \bR$, fully symmetric in their indices, such that 
\begin{eqnarray*}
  h_{s_1 \ldots s_p t} \, \w^t_0 & = &
  \td h_{s_1 \ldots s_p} \, + \, (p-1) \, h_{s_1 \ldots s_p} \, \w^0_0
  \, + \, h_{s_1 \ldots s_p} \, \w^{n+1}_{n+1} \hfill \\
  & &
  + \ p \,  
  \left\{ (p-2) \, h_{(s_1 \ldots s_{p-1}} \w^0_{s_p)} \,  
       - \, h_{t(s_1 \ldots s_{p-1}} \, \w^t_{s_p)}  \right\} \\
  & & 
  + \ \sum_{j=1}^{p-2} \, \tbinom{p}{j} \, 
      \left\{
      (j-1)\, h_{(s_1 \ldots s_j} \, h_{s_{j+1} \ldots s_p)} \, \w^0_{n+1}
      \ - \ h_{t(s_1 \ldots s_j} \, h_{s_{j+1} \ldots s_p)} \, \w^t_{n+1} 
       \right\} \, . 
\end{eqnarray*}
\end{proposition}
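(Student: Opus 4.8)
The plan is to prove Proposition \ref{P:h} by induction on $p$. I regard the relation \eqref{E:h2}, $\w^{n+1}_s = h_{st}\,\w^t_0$, as the base case ``$p=1$'' (with the convention $h_s \equiv 0$), and \eqref{E:h3} as the case $p=2$; the latter was obtained above by differentiating \eqref{E:h2} via the Maurer--Cartan equation \eqref{E:mcR} and invoking Cartan's Lemma \ref{L:Cartan4w}, the symmetry of $h_{rst}$ following because the left side of \eqref{E:h3} is manifestly unchanged under $r\leftrightarrow s$ (using $h_{rs}=h_{sr}$) while Cartan's Lemma forces symmetry under $s\leftrightarrow t$. For the inductive step fix $p\ge 3$, assume the asserted identity holds at every level $1\le q\le p-1$, and write $\Theta_{s_1\ldots s_q}$ for the real $1$-form on $\cF_S$ defined by the right-hand side of the level-$q$ identity, so that the inductive hypothesis reads $\Theta_{s_1\ldots s_q}=h_{s_1\ldots s_q t}\,\w^t_0$ and, equivalently, the level-$q$ identity can be solved for the exact derivative $\td h_{s_1\ldots s_q}$.

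The idea is to differentiate the level-$(p-1)$ identity and read off the level-$p$ identity. On the one hand, using \eqref{E:mcR} to expand $\td\w^t_0=-\w^t_\ell\wedge\w^\ell_0$ and \eqref{E:w^n+1_0} to drop the $\ell=n+1$ term,
\[
  \td\bigl(h_{s_1\ldots s_{p-1}t}\,\w^t_0\bigr)\ =\
  \bigl(\td h_{s_1\ldots s_{p-1}t}\ +\ h_{s_1\ldots s_{p-1}t}\,\w^0_0
  \ -\ h_{s_1\ldots s_{p-1}u}\,\w^u_t\bigr)\wedge\w^t_0\,.
\]
On the other hand I would differentiate the explicit formula for $\Theta_{s_1\ldots s_{p-1}}$ term by term: $\td\td h_{s_1\ldots s_{p-1}}=0$, each $\td\w^j_k$ is replaced via \eqref{E:mcR}, every $\w^{n+1}_s$ that thereby appears is rewritten using \eqref{E:h2}, and every exact derivative $\td h_{s_1\ldots s_q}$ ($1\le q\le p-1$) produced by the Leibniz rule is replaced by $h_{s_1\ldots s_q t}\,\w^t_0$ minus the remaining terms of the level-$q$ formula. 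Equating the two resulting $2$-forms yields an identity on $\cF_S$ which must then be organized.

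The organization of this identity is where the real work --- and the main obstacle --- lies. I expect that, once the substitutions above are carried out, all terms whose two factors both lie among $\w^0_0,\w^{n+1}_{n+1},\w^0_{n+1},\w^t_{n+1}$ occur in cancelling pairs (for instance the $\w^0_0\wedge\w^{n+1}_{n+1}$ contributions coming from differentiating $h_{s_1\ldots s_{p-1}}\w^0_0$ and $h_{s_1\ldots s_{p-1}}\w^{n+1}_{n+1}$ cancel after substituting for $\td h_{s_1\ldots s_{p-1}}$), so that, after a further use of \eqref{E:w^n+1_0}, every surviving term carries a factor $\w^{s_p}_0$ with $1\le s_p\le n$ and the identity becomes $\Xi_{s_p}\wedge\w^{s_p}_0=0$ for certain real $1$-forms $\Xi_{s_p}$. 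The crux is then a careful bookkeeping of the symmetrizations and binomial coefficients --- using Pascal's rule $\tbinom{p-1}{j-1}+\tbinom{p-1}{j}=\tbinom{p}{j}$ together with the standard combinatorial identities for symmetrizing products of already-symmetric tensors --- that identifies $\Xi_{s_p}$, for each fixed $s_1,\ldots,s_{p-1}$, with precisely the right-hand side $\Theta_{s_1\ldots s_{p-1}s_p}$ of the asserted level-$p$ formula. (As a sample check: the coefficient $(p-1)$ of $h_{s_1\ldots s_p}\w^0_0$ should emerge as $(p-2)+1$, the $(p-2)$ from differentiating the term $(p-2)\,h_{s_1\ldots s_{p-1}}\w^0_0$ of $\Theta_{s_1\ldots s_{p-1}}$ and substituting for $\td h_{s_1\ldots s_{p-1}}$, and the $1$ from the $\ell=0$ summand of $\td\w^t_0$ in the display above.)

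Granting this, the proof finishes quickly. Since $\w^1_0,\ldots,\w^n_0$ are linearly independent on $\cF_S$ (Remark \ref{R:sb}) and the $\Xi_{s_p}$ are real-valued, Cartan's Lemma \ref{L:Cartan4w} supplies functions $h_{s_1\ldots s_p t}:\cF_S\to\bR$ with $\Theta_{s_1\ldots s_p}=h_{s_1\ldots s_p t}\,\w^t_0$ and $h_{s_1\ldots s_p t}=h_{s_1\ldots s_{p-1}t s_p}$. Moreover the right-hand side of the level-$p$ formula is manifestly symmetric in $s_1,\ldots,s_p$: the terms $\td h_{s_1\ldots s_p}$, $h_{s_1\ldots s_p}\w^0_0$ and $h_{s_1\ldots s_p}\w^{n+1}_{n+1}$ are symmetric by the inductive hypothesis, and every other term is an explicit symmetrization over $s_1,\ldots,s_p$; hence $h_{s_1\ldots s_p t}\,\w^t_0=\Theta_{s_1\ldots s_p}$ is symmetric in $s_1,\ldots,s_p$, and since $S_p$ together with the transposition exchanging $s_p$ and $t$ generates $S_{p+1}$, linear independence of the $\w^t_0$ gives full symmetry of $h_{s_1\ldots s_p t}$. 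This completes the induction.
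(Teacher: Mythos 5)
Your proposal takes essentially the same route as the paper: the paper's proof simply notes that $p=2$ is \eqref{E:h3}, carries out the differentiation explicitly for $p=3$ to obtain \eqref{E:h4}, and then asserts that the general case follows by induction on $p$ via exactly the differentiate--substitute--apply-Cartan's-Lemma scheme you describe, with the combinatorial details left to the reader. Your deferral of the bookkeeping (``I expect that the cancellations occur'') is therefore at the same level of detail as the paper itself; the only thing the paper does that you might add is the explicit $p=3$ computation, which serves as a concrete check of the symmetrization and binomial-coefficient structure before invoking the induction.
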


\begin{definition*}
We let $h^{(p)} = \{ h_{s_1\cdots s_p}\}$ denote the $p$-th order coefficients; and $h = \cup\, h^{(p)}$ denote the complete set of coefficients.  
\end{definition*}

\begin{remark} \label{R:hinvar}
The $h$ are projective invariants.  To be precise, suppose that $e \in \cF_S$.  Let $A \in \tGL(W)$ and set $\wt S = A(S)$.  Then $\tilde e = L_A(e) \in \cF_{\wt S}$ by Remark \ref{R:LAFS}.  From Proposition \ref{P:h} and the left-invariance \eqref{E:Linvar} of $\w$, we see that $h(e) = h(\tilde e)$.  Thus $h$ is invariant under projective transformation.  In Proposition \ref{P:hcomplete} we show that, for fixed $e \in \cF_S$, the coefficients $h(e)$ characterize an analytic CR-hypersurface up to projective transformation.
\end{remark}

\begin{remark*}  The coefficients $h$ are the CR--analogs of the coefficients of the Fubini forms of a complex subvariety of $\bP W$, cf. \cite[Chapter 3]{MR2003610}.
\end{remark*}

\begin{definition*}In this setting, we say two CR-hypersurfaces $S, \wt S \subset \bP W$ \emph{agree to order $p\ge1$} if there exists a bundle map $\phi : \cF_S \to \cF_{\wt S}$ with the property that $\w^\s_0 = \phi^*\wt\s^\s_0$, $\w^n_0 = \phi^* \wt\w^n_0$ (first-order agreement) and $\phi^*\wt h^{(q)} = h^{(q)}$ for all $2\le q \le p$.  
\end{definition*}

\begin{remark}\label{R:Jensen}
This is a considerably weaker condition than Jensen's notion of $p$-th order projective deformation when $p\ge 2$.  For example, in our case second order agreement is equivalent to $\w^\s_0 = \phi^* \wt \w^\s_0$, $\w^n_0 = \phi^*\wt\w^n_0$ and $\w^{n+1}_s = \phi^* \wt \w^{n+1}_s$.  Jensen's condition that $S$ and $\wt S$ be second-order projective deformations of each other includes the additional relations $\Omega^a_b = \phi^* \wt\O^a_b$, with $1\le a,b\le m$.  Differentiating Jensen's second-order equations yields $\w = \phi^*\wt\w$ which implies that $S$ and $\wt S$ are projectively equivalent.  See \cite{MR836926}.
\end{remark}

\begin{proof}[Proof of Proposition \ref{P:h}]
From \eqref{E:h3} we see that Proposition \ref{P:h} holds for $p=2$.  Differentiating \eqref{E:h3} and applying Cartan's Lemma \ref{L:Cartan4w} yields functions $h_{rstu} : \cF_S\to\bR$, completely symmetric in their indices, such that 
\begin{equation} \label{E:h4} \renewcommand{\arraystretch}{1.3}
\begin{array}{rcl}
  h_{rstu}\,\w^u_0 & = & \td h_{rst} \, + \, 
  h_{rst}\,( 2\,\w^0_0 + \w^{n+1}_{n+1}) \, - \, 
  h_{ust}\,\w^u_r \, - \, h_{rut}\,\w^u_s \, - \, h_{rsu}\,\w^u_t \\
  & & + \, h_{rs}\,\w^0_t \, + \, h_{st}\,\w^0_r \, + \, h_{tr}\,\w^0_s \, - \, 
  ( h_{rs}\,h_{tu} \,+\, h_{st}\,h_{ru} \,+\, h_{tr}\,h_{su} ) \, \w^u_{n+1} \, .
\end{array}\end{equation}
This establishes Proposition \ref{P:h} in the case $p=3$.  The general case may not be established by induction on $p$.  Details are left to the reader.
\end{proof}

%------------------------------------------------------------------------------
\subsection{Local coordinate computation} \label{S:local_comp_w}
%------------------------------------------------------------------------------

In this section we give a local coordinate computation of $h_{st}$ at a point $w_o \in S$.  Given $w_o = [1:0:\cdots:0] \in S$ fix coordinates $z = (z^1,\ldots,z^m) \mapsto [1:z^1:\cdots:z^m]$ on $\bP W$ in a neighborhood of $w_o$ so that $S$ is locally given as a graph
$$
  y^m \ = \ f(z^1,\bar z^1,\ldots,z^{m-1},\bar z^{m-1},x^m)
      \ = \ f(x^1,y^1,\ldots,x^{m-1},y^{m-1},x^m) \, , \quad z^j = x^j + \ti\, y^j \, ,
$$
over its (real) tangent space $T_p S = \{y^m=0\}$.  Fix the index range $0<\a,\b< m$.  Locally the tangent space is spanned by 
$$
  \sfe_{2\a} \ = \ \partial_{x^\a} \ + \ f_{x^\a} \, \partial_{y^m} \,,\quad
  \sfe_{2\a+1} \ = \ \partial_{y^\a} \ + \ f_{y^\a} \, \partial_{y^m} \,,\quad
  \sfe_{2m} \ = \ \partial_{x^m} \ + \ f_{x^m} \, \partial_{y^m} \, .
$$
The vector $\sfe_{2m+1} = \sfJ \sfe_{2m} = \partial_{y^m} - f_{x^m} \partial_{x^m}$
completes the local framing of the tangent space to a framing of $\bC^m$ (as a real vector space).  If we set 
\begin{eqnarray*}
  \sfe_0 & = & \partial_{x^0} \ + \ x^\a \, \partial_{x^\a} 
  \ + \ y^\a \, \partial_{y^\a} 
  \ + \ x^m \, \partial_{x^m} \ + \ f \, \partial_{y^m} \\
  \sfe_1 \ = \ \sfJ \sfe_0 & = & \partial_{y^0} 
  \ - \ y^\a \, \partial_{x^\a} \ + \ x^\a \, \partial_{y^\a} 
  \ + \ x^m \, \partial_{y^m} \ - \ f \, \partial_{x^m}
\end{eqnarray*}
then we may regard $\sfe = (\sfe_0,\ldots,\sfe_{2m+1})$ as framing of $V$.  However this frame does not satisfy \eqref{E:e}.  We modify $\sfe$ as follows.  Set 
$$
  e_0 \ = \ \sfe_0 \, , \quad 
  e_1 \ = \ \sfe_1 \, , \quad 
  e_{2m} \ = \sfe_{2m} \, , \quad 
  e_{2m+1} \ = \ \sfe_{2m+1} \, .
$$
Let $\d = 1/(1 + f_{x^m}{}^2)$ and define 
\begin{eqnarray*}
  e_{2\a} & = & 
  \sfe_{2a} \ + \ \d \, ( f_{y^\a} - f_{x^a} f_{x^m} ) \, \sfe_{2m} \\
  & = & \partial_{x^\a} \ + \ \d \, ( f_{y^\a} - f_{x^\a} f_{x^m}) \partial_{x^m}
  \ + \ \d \, ( f_{x^\a} + f_{y^\a} f_{x^m}) \partial_{y^m} \, ,\\
  e_{2\a+1} & = & 
  \sfe_{2\a+1} \ - \ \d \, ( f_{x^\a} + f_{y^a} f_{x^m} ) \, \sfe_{2m} \\
  & = & \partial_{y^\a} \ - \ \d \, ( f_{x^\a} + f_{y^\a} f_{x^m}) \partial_{x^m}
  \ + \ \d \, ( f_{y^\a} - f_{x^\a} f_{x^m}) \partial_{y^m}\, .
\end{eqnarray*}
Then $e = (e_0,\ldots,e_{2m+1})$ is a local section of $\cF_S$.  The dual coframe is $e^0 = \td x^0$, $e^1 = \td x^1$,
\begin{eqnarray*}
  e^{2a} & = & \td x^a \ - \ x^\a \, \td x^0 \ + \ y^\a \, \td y^0 \, ,\quad
  e^{2a+1} \ = \ \td y^a \ - \ y^\a \, \td x^0 \ - \ x^\a \, \td y^0 \, , \\
  e^{2m} & = & \td x^m \ - \ x^m \, \td x^0 \ + \ f\, \td y^0
  \ - \ \d \, (f_{y^\a} - f_{x^\a} f_{x^m}) \, \td x^\a 
  \ + \ \d \, ( f_{x^\a} + f_{y^\a} f_{x^m}) \, \td y^\a \\
  e^{2m+1} & = & \td y^m \ - \ f \, \td x^0 \ - \ x^m \, \td y^0
  \ - \ \d \, (f_{x^\a} + f_{y^\a} f_{x^m}) \, \td x^\a 
  \ - \ \d \, (f_{y^\a} - f_{x^\a} f_{x^m}) \, \td y^\a \, .
\end{eqnarray*}

Notice that, at $z(w_o) = 0$ we have $f = 0 = \td f$, where the last equality  follows from the fact that the coordinates locally realize $S$ as a graph over its tangent space at $w_o$.  Differentiating at $w_o$ yields
\begin{eqnarray*}
  \td e_0 & = & \td x^\a \, e_{2\a} \, + \, \td y^\a \, e_{2\a+1} 
  \, + \, \td x^m \, e_{n} \, ,\quad
  \td e_1 \, = \, -\td y^\a \, e_{2\a} \, + \, \td x^\a \, e_{2\a+1} 
  \, + \, \td x^m \, e_{n+1}\, , \\
  \td e_{2\a} & = & \td f_{y^\a} \, e_{n} \ + \ \td f_{x^\a} \, e_{n+1}\, , \quad
  \td e_{2\a+1} \ = \ -\td f_{x^\a} \, e_{n} \ + \ \td f_{y^\a} \, e_{n+1}\, , \\
  \td e_{2m} & = & \td f_{x^m} \, e_{n+1}\, , \quad 
  \td e_{2m+1} =\ = \ -\td f_{x^m} \, e_{n} \, . 
\end{eqnarray*}
From \eqref{E:de} we see that, under pulled-back by the section $e$, the only nonzero components $\w^j_k$ of the Maurer-Cartan form are 
$$ \renewcommand{\arraystretch}{1.3}
\begin{array}{llll}
  \w^{2\a}_0 \ = \ \td x^\a \,,\quad & 
  \w^{2\a}_1 \ = \ -\td y^\a \,, \quad & 
  \w^{2\a+1}_0 \ = \ \td y^\a \,, \quad & 
  \w^{2\a+1}_1 \ = \ \td x^\a \,,\\
  \w^n_0 \ = \ \td x^m \,, & 
  \w^n_{2\b} \ = \ \td f_{y^\b} \,, & 
  \w^n_{2\b+1} \ = \ -\td f_{x^\b} \, , & 
  \w^n_{n+1} \ = \ -\td f_{x^m} \,,\\
  \w^{n+1}_1 \ = \ \td x^m \,, & 
  \w^{n+1}_{2\b} \ = \ \td f_{x^\b} \,, & 
  \w^{n+1}_{2\b+1} \ = \ \td f_{y^\b} \,, &
  \w^{n+1}_n \ = \ \td f_{x^m} \, .
\end{array}
$$
Equation \eqref{E:h2} yields (again, at $z(w_o) = 0$)
\begin{equation} \label{E:df=h} \renewcommand{\arraystretch}{1.3}
\begin{array}{rrr}
  h_{2\a,2\b} \ = \ f_{x^\a x^\b} \, , \quad &
  h_{2\a,2\b+1} \ = \ f_{x^\a y^\b} \, , \quad &
  h_{2\a,n} \ = \ f_{x^\a x^m} \, , \\
  h_{2\a+1,2\b+1} \ = \ f_{y^\a y^\b} \, , \quad &
  h_{2\a+1,n} \ = \ f_{y^\a x^m} \, , \quad &
  h_{n,n} \ = \ f_{x^m x^m} \, .
\end{array}\end{equation}
To be precise, the coefficients $h$ on the left are evaluated at $e(w_o) \in \cF_S$, and the derivatives on the right are evaluated at $z(w_o) = 0$.  More generally, for $p > 1$, Proposition \ref{P:h} yields 
$$
  h_{s_1 \ldots s_p t} \, \w^t_0 \ = \
  \td h_{s_1 \ldots s_p} \ - \ p \, h_{t(s_1 \ldots s_{p-1}} \, \w^t_{s_p)} 
  \ - \ \sum_{j=1}^{p-2} \, \tbinom{p}{j} \, 
   h_{n(s_1 \ldots s_j} \, h_{s_{j+1} \ldots s_p)} \, \td f_{x^m} \, . 
$$
In the case that none of the indices are equal to 1, we see that $h_{s_1\cdots s_p t}$ is a $(p+1)$-st derivative of $f$, modulo terms involving lower over derivatives.  This is why we refer to the $h_{s_1 \cdots s_p}$ as $p$-th order invariants.  Moreover, given the $h_{s_1\cdots s_p}$ we may recover the partial derivatives of $f$ at $0$ and therefore the hypersurface $S$ (assuming $S$ is real analytic).  That is, a connected, analytic $S$ is completely determined by $h(e)$.

The frame $e(w_o)$ over $w_o$ is not unique in this respect.  Given second frame $\tilde e$ over $w_o$, it is always possible to find a local section of the form above through $\tilde e$.  

\begin{proposition} \label{P:hcomplete}
Let $S, \wt S \subset \bP W$ be two connected, analytic CR-hypersurfaces.  Then there exists a complex projective linear transformation $A : \bP W \to \bP W$ such that $A(S) = \wt S$ if and only if there exist frames $e \in \cF_{S}$ and $\tilde e \in \cF_{\wt S}$ such that $h(e) = {\wt h}(\tilde e)$.
\end{proposition}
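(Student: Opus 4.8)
The plan is to prove the two implications separately. The forward direction is essentially immediate from the projective invariance of the coefficients, and the reverse direction is where the local-coordinate analysis of Section~\ref{S:local_comp_w} does the real work.

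\emph{The easy direction.} Assuming $A \in \tGL(W)$ with $A(S) = \wt S$, I would pick any frame $e \in \cF_S$ and set $\wt e := L_A(e)$. By Remark~\ref{R:LAFS}, $L_A$ maps $\cF_S$ onto $\cF_{A(S)} = \cF_{\wt S}$, so $\wt e \in \cF_{\wt S}$; and by Remark~\ref{R:hinvar} — which follows from the recursion of Proposition~\ref{P:h} together with the left-invariance \eqref{E:Linvar} of $\w$ (the $\w^s_0$ being independent, the coefficients are uniquely pinned down by those relations, so $L_A$ intertwines the two systems) — one gets $h(e) = \wt h(\wt e)$. No analyticity is needed here.

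\emph{The main direction.} Assume conversely $h(e) = \wt h(\wt e)$ for some $e \in \cF_S$, $\wt e \in \cF_{\wt S}$. Write $w_o = [e_0] \in S$, $\wt w_o = [\wt e_0] \in \wt S$, and let $A \in \tGL(W)$ be the unique automorphism with $L_A(e) = \wt e$; then $A w_o = \wt w_o$. The key steps, in order: (i) as explained at the end of Section~\ref{S:local_comp_w}, from the abstract frame $e$ (equivalently, from the basis $f_a = e_{2a}$ of $W$) one constructs an adapted affine chart $z = (z^1,\dots,z^m)$ about $w_o$ in which $S$ is a graph $y^m = f(x^1,y^1,\dots,x^{m-1},y^{m-1},x^m)$ over its tangent space, together with the distinguished local section of $\cF_S$ passing through $e$, all by formulas not depending on $S$; (ii) by the inductive identity displayed at the end of Section~\ref{S:local_comp_w}, the numbers $h(e)$ determine every partial derivative of $f$ at $0$ through a universal algebraic recursion independent of $S$; (iii) running (i)--(ii) for $\wt S$, $\wt e$ yields an adapted chart $\wt z$, defining function $\wt f$, and the same recursion for the partials of $\wt f$ at $0$ in terms of $\wt h(\wt e)$, so $h(e)=\wt h(\wt e)$ forces the Taylor series of $f$ and $\wt f$ at $0$ to agree, hence $f \equiv \wt f$ near $0$ by real-analyticity; (iv) since $L_A(e) = \wt e$, $A$ carries $(e_{2a})$ to $(\wt e_{2a})$, hence the chart $z$ to $\wt z$ and the distinguished section of $\cF_S$ to that of $\cF_{\wt S}$, and in these matched coordinates $S = \{y^m = f\}$ while $\wt S = \{\wt y^m = \wt f = f\}$, so $A(S) = \wt S$ on a neighborhood of $\wt w_o$; (v) finally $A(S)$ and $\wt S$ are connected real-analytic hypersurfaces agreeing on a nonempty open set, so the identity theorem gives $A(S) = \wt S$.

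\emph{Anticipated obstacle.} The delicate point is (i)--(ii): one must check that the adapted chart, the adapted section, and the reconstruction of the Taylor coefficients of the local defining function are all given by formulas depending only on the frame $e \in \cF$ and the real numbers $h(e)$ — never on $S$ itself — so that equality of the invariants genuinely forces $f = \wt f$; and one must verify that the unique automorphism matching the two adapted charts is exactly the $A$ with $L_A(e) = \wt e$. Once that bookkeeping is in place, steps (iii)--(v) are routine. As an alternative to (v), one could instead assemble a bundle map $\phi : \cF_S \to \cF_{\wt S}$ with $\phi^*\w_{|\cF_{\wt S}} = \w_{|\cF_S}$ and invoke Proposition~\ref{P:proj_equiv}, but the identity-theorem argument is the more direct route.
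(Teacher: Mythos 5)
Your proposal is correct and follows essentially the same route as the paper: the forward direction via Remark \ref{R:LAFS} and the left-invariance \eqref{E:Linvar} together with Proposition \ref{P:h}, and the converse by defining $A$ through $Ae=\tilde e$ and using the local-coordinate reconstruction of Section \ref{S:local_comp_w} (the invariants determine the Taylor coefficients of the graph function) plus analyticity and connectedness. The ``bookkeeping'' you flag — that the adapted chart and section can be arranged through any given frame, independently of $S$ — is exactly the assertion the paper itself relies on in the discussion preceding the proposition, so your write-up is, if anything, a more explicit version of the paper's proof.
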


\begin{remark*}
In Section \S\ref{S:h1} we will show that the $h_{s_1\cdots s_p}$ with some $s_i = 1$ are redundant.
\end{remark*}

\begin{proof}
Let $A : \bP W \to \bP W$ be an invertible projective linear transformation, and let $L_A : \tGL(W) \to \tGL(W)$ denote the lift (left multiplication) to $\tGL(W)$.  If $A$ maps $S$ to $\wt S$, then the lift $L_A$ maps $\cF_S$ to $\cF_{\wt S}$.  Given $e \in \cF_S$, let $\tilde e = \wt A(e)$.  Then \eqref{E:Linvar} and Proposition \ref{P:h} yield $h(e) = {h}(\tilde e)$.

Conversely, suppose that $h(e) = h(\tilde e)$ for some frames $e \in \cF_S$ and $\tilde e \in \cF_{\wt S}$.  Define $A \in \tGL(W)$ by $A e = \tilde e$.  Then $A(S)$ and $\wt S$ agree to infinite order at $\tilde e$.  It follows from the discussion preceding the lemma that $A(S) = \wt S$.
\end{proof}

%------------------------------------------------------------------------------
\subsection{Redundancy of the index 1 in \boldmath $h$ \unboldmath} \label{S:h1}
%------------------------------------------------------------------------------

The alert reader will have noticed that \eqref{E:w} allows us to write \eqref{E:dw} as 
$$
  0 \ = \ \w^{n+1}_2 \wedge \w^2_0 \ + \ \cdots \ + \  
          \w^{n+1}_{n-1}\wedge\w^{n-1}_0
          \ + \ (\w^{n+1}_n - \w^1_0) \wedge \w^n_0
$$
Cartan's Lemma then yields a smaller set of second-order invariants $\{ h_{st} \ | \ 2 \le s , t \le n \}$, defined by $\w^{n+1}_\sigma = h_{\sigma t} \,\w^\t_0$ and $\w^{n+1}_n - \w^1_0 = h_{nt}\,\w^t_0$, $2 \le t \le n$.  That is, the coefficients of $h^{(2)}$ with some index equal to 1 are redundant.  

This pattern continues.  First define
$$
  J^{2\a}_{2\b} \ := \ 0 \ =: \ J^{2\a+1}_{2\b+1} \,, \quad
  J^{2\a+1}_{2\b} \ := \ \d^\a_\b \quad \hbox{ and } \quad
  J^{2\a}_{2\b+1} \ := \ -\d^\a_\b \, .
$$
Then $J^\s_\n\,J^\n_\t = -\d^\s_\tau$, and \eqref{E:w} and Corollary \ref{C:P_S} yield
\begin{equation} \label{E:Jh} \renewcommand{\arraystretch}{1.3}
\begin{array}{rclrclrcl}
  \w^\sigma_1 & = & J^\sigma_\tau \, \w^\tau_0 \,,\quad &
  \w^1_\sigma & = & -J_\sigma^\tau\,\w^0_\tau \,,\quad &
  \w^0_\s & = & J^\t_\s\,\w^1_\t \, , \\
  \w^n_\sigma & = & J^\nu_\sigma\,h_{\nu\tau}\,\w^\tau_0 \,,\quad &
  \w^\s_{n+1} & = & J^\s_\tau\,\w^\tau_n \,,\quad &
  \w^\s_\t & = & -J^\s_\m\,\w^\m_\n\,J^\n_\t \,.
  % \quad \hbox{or} \quad \w^\s_\n\,J^\n_\t \ = \ J^\s_\n\,\w^\n_\t \, .
\end{array}\end{equation}
To see that the third-order invariants $h_{rst}$ with some index equal to 1 are redundant differentiate $0 = h_{11} = h_{1\sigma}$ and $h_{1n} = 1$ to obtain
\begin{eqnarray}
  \nonumber
  0 & = & h_{11s}  \ = \ h_{1nn} \,,\\
  \label{E:dh2_id}
  h_{1\s n} & = & -J_\sigma^\tau\, h_{\tau n} \,,\\
  \nonumber
  h_{1\s\t} & =& -J_\s^\n\,h_{\n\t} \ - \ J_\t^\n\,h_{\s\n} \, .
\end{eqnarray}

The general observation may be established by induction:  Define 
$$
  1^{\sfa} \sigma_1\cdots \sigma_p n^{\sfb} \ := \ 
  \underbrace{1\cdots1}_{\sfa \ 1'\mathrm{s}}\sigma_1\cdots\sigma_p
  \underbrace{n\cdots n}_{\sfb \ n'\mathrm{s}} \, .
$$
Assume that there exists $r_o \in \bZ$ such that every $h_{1^\sfa\s_1\cdots\s_p n^\sfb} \in \{h^{(r)} \ | \ r \le r_o \}$ with $\sfa > 0$ is a linear combination of the $\{h_{\tau_1\cdots\tau_p n^\sfb}\}$.  Above we saw that this is the case for $r_o = 2,3$.  Then a computation with Proposition \ref{P:h} that makes use of \eqref{E:w} and \eqref{E:Jh} will show that every $h_{1^\sfa\s_1\cdots\s_p n^\sfb} \in h^{(r_o+1)}$ with $\sfa>0$ is a linear combination of the $\{h_{\tau_1\cdots\tau_p n^\sfb}\}$.  Details are left to the reader.  As a corollary to Lemma \ref{P:hcomplete} we have the following.

\begin{corollary} \label{C:h}
  The $h_o = \{ h_{\sigma_1\cdots\sigma_p n^\sfb} \}$ form a complete set of projective differential invariants for an analytic CR-hypersurface.  That is, two hypersurfaces $S$ and $\wt S$ are projectively equivalent if and only if there exists $e \in \cF_S$ and $\tilde e \in \cF_{\wt S}$ such that $h_o(e) = h_o(\tilde e)$.
\end{corollary}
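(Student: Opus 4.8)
The plan is to obtain Corollary \ref{C:h} as a formal consequence of Proposition \ref{P:hcomplete}, once one knows that the reduced collection $h_o$ already determines the full collection $h$. Since $h_o$ is, by definition, precisely the sub-collection of those $h_{s_1\cdots s_q}\in h$ no index of which equals $1$, one implication needs nothing new: if $A(S)=\wt S$ for a projective linear transformation $A$, then Proposition \ref{P:hcomplete} produces frames $e\in\cF_S$, $\tilde e\in\cF_{\wt S}$ with $h(e)=\wt h(\tilde e)$, and a fortiori $h_o(e)=h_o(\tilde e)$.

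For the reverse implication I would first record the claim underlying Section \ref{S:h1} in the precise form needed: there is a \emph{universal} family of expressions $\Phi_{1^\sfa\sigma_1\cdots\sigma_p n^\sfb}$ --- polynomial in the coefficients belonging to $h_o$, with coefficients assembled only from the fixed tensor $J$ and from binomial coefficients, hence the same for every CR-hypersurface --- such that $h_{1^\sfa\sigma_1\cdots\sigma_p n^\sfb}=\Phi_{1^\sfa\sigma_1\cdots\sigma_p n^\sfb}(h_o)$ whenever $\sfa>0$. I would prove this by induction on the order $\sfa+p+\sfb$: the orders $\le 3$ are recorded in \eqref{E:h2_id} and \eqref{E:dh2_id}, and the inductive step is the computation indicated in Section \ref{S:h1} --- one differentiates $h_{1^\sfa\sigma_1\cdots\sigma_p n^\sfb}$ by Proposition \ref{P:h}, uses \eqref{E:w} and \eqref{E:Jh} to rewrite the forms $\w^\sigma_1$, $\w^1_\sigma$, $\w^0_\sigma$, $\w^n_\sigma$, $\w^\sigma_{n+1}$, $\w^\sigma_\tau$ occurring on the right as multiples of the semi-basic $\w^\tau_0$, and then invokes the inductive hypothesis to rewrite the lower-order index-$1$ coefficients thereby produced in terms of $h_o$. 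Reading off the coefficient of $\w^t_0$ then exhibits $h_{1^\sfa\sigma_1\cdots\sigma_p n^\sfb}$ as a universal expression in $h_o$. Granting this, $h$ is the image of $h_o$ under a hypersurface-independent map $\Phi$, so $h_o(e)=h_o(\tilde e)$ forces $h(e)=\wt h(\tilde e)$; Proposition \ref{P:hcomplete} then supplies a projective linear transformation carrying $S$ to $\wt S$, which completes the argument.

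The step I expect to require genuine work is the inductive step just sketched --- exactly the computation the text leaves to the reader. The potential obstacle is that, after differentiating via Proposition \ref{P:h} and substituting from \eqref{E:Jh}, the linear terms $h_{t(s_1\cdots s_{p-1}}\w^t_{s_p)}$, $h_{(s_1\cdots s_{p-1}}\w^0_{s_p)}$ and the quadratic terms $h_{t(s_1\cdots s_j}h_{s_{j+1}\cdots s_p)}\w^t_{n+1}$, $h_{(s_1\cdots s_j}h_{s_{j+1}\cdots s_p)}\w^0_{n+1}$ could re-introduce an index-$1$ coefficient of the top order on the right-hand side; one must verify, using $J^\sigma_\nu J^\nu_\tau=-\delta^\sigma_\tau$ and the full symmetry of the $h$, that any such contributions cancel or drop to lower order, so that the final coefficient of $\w^t_0$ is genuinely built from $h_o$ alone. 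Everything else is formal: once the reduction is in place, the Corollary is immediate from Proposition \ref{P:hcomplete}.
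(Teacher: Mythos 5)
Your argument is essentially the paper's own: the paper also deduces Corollary \ref{C:h} from Proposition \ref{P:hcomplete} together with the inductive redundancy claim of Section \ref{S:h1} (index-$1$ coefficients expressed universally through $h_o$ via Proposition \ref{P:h}, \eqref{E:w} and \eqref{E:Jh}), and it likewise leaves the inductive computation to the reader. So your proposal is correct and matches the paper's approach, including your identification of where the real work lies.
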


We close the section by pointing out that, while the $h_{1^\sfa\tau_1\cdots\tau_p n^\sfb}$ with $\sfa>0$ are redundant, they are very convenient for computations; the formulation of Proposition \ref{P:h} would not be so tidy without them.  Indeed we discern no analogous general expression for the differential invariants with respect to $\Omega$.

%------------------------------------------------------------------------------
\section{Projective differential invariants w.r.t. $\Omega$} \label{S:wrtO}
%------------------------------------------------------------------------------

%------------------------------------------------------------------------------
\subsection{Second-order invariants} \label{S:PQ}
%------------------------------------------------------------------------------

By Remark \ref{R:sb} the $\{ \O^\a_m , \barO^\a_m , \O^m_0 \}$ span the semi-basic forms on $\cF_S$.  The $\O$-version of Cartan's Lemma is as follows.

\begin{lemma}[Cartan's Lemma {\cite{MR1083148,MR2003610}} for $\Omega$] \label{L:CartanO}
Suppose there exist complex-valued 1-forms $\zeta_\a$, $\zeta_{\bar\a}$ and $\zeta_m$ on $\cF_S$ such that $\zeta_\a \wedge \O^\a_0 + \zeta_{\bar\a} \wedge \bar\O^\a_0 + \z_m \wedge \O^m_0 = 0$.  Then there exist functions $Z_{\a\b} , Z_{\a\bar\b} , Z_{\bar\a\b} , Z_{\bar\a\bar\b} , Z_\a , Z_{\bar\a} , Z : \cF_S \to \bC$ such that 
\begin{eqnarray*}
  \z_\a & = & Z_{\a\b}\,\O^\b_0 \ + \ Z_{\a\bar\b}\,\bar\O^\b_0 
  \ + \ Z_\a \, \O^m_0 \, , \\
  \z_{\bar\a} & = & Z_{\bar\a\b}\,\O^\b_0 \ + \ Z_{\bar\a\bar\b}\,\bar\O^\b_0 
  \ + \ Z_{\bar\a} \, \O^m_0 \, , \\
  \z_m & = & Z_{\b}\,\O^\b_0 \ + \ Z_{\bar\b}\,\bar\O^\b_0 
  \ + \ Z \, \O^m_0 \, ,
\end{eqnarray*}
and 
$$
  Z_{\a\b} \ = \ Z_{\b\a} \, , \quad
  Z_{\a\bar\b} \ = \ Z_{\bar\b\a} \, , \quad
  Z_{\bar\a\bar\b} \ = \ Z_{\bar\b\bar\a} \, .
$$
\end{lemma}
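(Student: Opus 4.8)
The plan is to recognize Lemma \ref{L:CartanO} as the classical Cartan Lemma applied to complex-valued $1$-forms --- that is, to the complexified cotangent bundle of $\cF_S$ --- with the index conventions of the statement absorbing the symmetry $H_{IJ}=H_{JI}$ that Cartan's Lemma always produces. The only genuinely structural input is a linear-independence fact. By Remark \ref{R:sb} the $n-1$ real $1$-forms $\w^2_0,\ldots,\w^n_0$ are pointwise linearly independent and span the semi-basic forms; on $\cF_S$ one has $\O^m_0=\w^n_0$ (use \eqref{E:w^n+1_0}, which is exactly the vanishing of $\w^{n+1}_0$), and via \eqref{E:w=(W)} the change of coframe from $\{\w^{2\a}_0,\w^{2\a+1}_0,\w^n_0\}$ to $\{\O^\a_0,\bar\O^\a_0,\O^m_0\}$ is invertible over $\bC$. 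Hence at each $p\in\cF_S$ the set $\{\O^\a_0,\bar\O^\a_0,\O^m_0:0<\a<m\}$ is a $\bC$-basis of the $(n-1)$-dimensional space of complex-valued semi-basic covectors, and in particular a $\bC$-linearly independent subset of $T^*_p\cF_S\otimes\bC$.

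Next I would pass to pointwise linear algebra. Fix $p$, set $U:=T^*_p\cF_S\otimes\bC$, and complete the $n-1$ forms above to a $\bC$-basis $\{\O^\a_0,\bar\O^\a_0,\O^m_0,\varpi^\rho\}$ of $U$, where the $\varpi^\rho$ span a complement to the semi-basic covectors. Expand $\zeta_\a,\zeta_{\bar\a},\zeta_m$ in this basis, writing the coefficients of the semi-basic part as the $Z$'s of the statement and those of the complementary part as auxiliary coefficients. Substituting into $\zeta_\a\wedge\O^\a_0+\zeta_{\bar\a}\wedge\bar\O^\a_0+\zeta_m\wedge\O^m_0=0$ and expanding, the left side becomes a $\bC$-linear combination of the basis $2$-vectors of $\Lambda^2 U$ built from $\{\O^\a_0,\bar\O^\a_0,\O^m_0,\varpi^\rho\}$, all of which are linearly independent. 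Setting the coefficients to zero: the $2$-forms that contain a factor $\varpi^\rho$ occur only in terms coming from the complementary parts of the $\zeta$'s, so every complementary coefficient vanishes; this already forces $\zeta_\a,\zeta_{\bar\a},\zeta_m$ to lie in the span of $\{\O^\a_0,\bar\O^\a_0,\O^m_0\}$ in exactly the shape displayed in the lemma. The remaining coefficients --- those of wedges of two semi-basic forms --- vanish precisely when the two cross-coefficients agree, which is exactly $Z_{\a\b}=Z_{\b\a}$, $Z_{\a\bar\b}=Z_{\bar\b\a}$ and $Z_{\bar\a\bar\b}=Z_{\bar\b\bar\a}$, the "mixed with $\O^m_0$" relations being built automatically into the common notation $Z_\a$, $Z_{\bar\a}$. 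Finally, since at each point the $Z$'s are recovered from the coefficients of the $\zeta$'s in a fixed smooth (resp. real-analytic) coframe by solving a linear system, they are smooth (resp. real-analytic) functions $\cF_S\to\bC$, and the argument being pointwise and uniform completes the proof. Alternatively one may invoke the real Cartan Lemma \ref{L:Cartan4w} directly, as in \cite{MR1083148, MR2003610}: split each complex equation into real and imaginary parts, apply Lemma \ref{L:Cartan4w} using the $\bR$-independence of $\w^2_0,\ldots,\w^n_0$, and repackage the resulting real symmetric coefficients into the complex $Z$'s via \eqref{E:w=(W)}.

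There is essentially no obstacle here; the content is bookkeeping. The single point demanding care is the first step: one must check that $\{\O^\a_0,\bar\O^\a_0,\O^m_0\}$ is independent over $\bC$, not merely over $\bR$, so that the $2$-forms appearing in the expansion are genuinely independent in $\Lambda^2 U$. This is where the reality of $\O^m_0$ on $\cF_S$ (from \eqref{E:w^n+1_0}) and the invertible change of basis \eqref{E:w=(W)} are used, and it is the only place the specific structure of $\cF_S$ enters the argument.
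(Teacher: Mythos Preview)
The paper does not actually supply a proof of Lemma~\ref{L:CartanO}: it is stated with a citation in the heading and treated as well-known, exactly as was done for the real version Lemma~\ref{L:Cartan4w}. Your proposal is a correct and standard argument for Cartan's Lemma in the complexified setting, and the one point you flag as needing care---the $\bC$-linear independence of $\{\Omega^\a_0,\bar\Omega^\a_0,\Omega^m_0\}$, deduced from Remark~\ref{R:sb} together with \eqref{E:w^n+1_0} and the invertible change of coframe \eqref{E:w=(W)}---is exactly the right ingredient.
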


Recall that \eqref{E:w^n+1_0} holds on $\cF_S$.  Differentiating with the Maurer-Cartan equation \eqref{E:mcC}, we obtain
\begin{equation} \label{E:dOm0}
  0 \ = \ \td \, (\Omega^m_0 - \bar\Omega^m_0) \ = \ 
  ( \Omega^0_0 + \bar\Omega^m_m - \bar\Omega^0_0 - \Omega^m_m ) \wedge \Omega^m_0
  \ - \ \Omega^m_\a \wedge \Omega^\a_0 
  \ + \ \bar\Omega^m_\a \wedge\bar\Omega^\a_0 \, .
\end{equation}
Note that $\Omega^0_0 + \bar\Omega^m_m - \bar\Omega^0_0 - \Omega^m_m$ takes values in $\ti\bR$.  So Lemma \ref{L:CartanO} yields functions $P_{\a\b} = P_{\b\a} \,,\ \bar P_{\a\bar\b} = -P_{\b\bar\a} \,,\ P_{\a m} \,,\ P_{mm} =-\bar P_{mm}: \cF_S \to \bC$ such that
\begin{subequations}  \label{E:1}
\begin{eqnarray}
  \label{E:1b}
  \Omega^m_\a 
  & = & P_{\a\b} \, \Omega^\b_0 \ + \ P_{\a\bar\b} \, \bar\Omega^\b_0 
  \ + \ P_{\a m} \, \Omega^m_0 \, ,\\
  \label{E:1a}
  (\Omega^0_0 \ + \ \bar\Omega^m_m) \ - \ 
  (\bar\Omega^0_0 \ + \ \Omega^m_m) & = & -P_{\a m} \, \Omega^\a_0 
  \ + \ \bar P_{\a m} \, \bar\Omega^\a_0 \ + \ P_{mm}\, \Omega^m_0 \,.
\end{eqnarray} \end{subequations}

Making use of \eqref{E:w}, \eqref{E:W=(w)}, \eqref{E:w=(W)}, \eqref{E:h2} and \eqref{E:1} we see that 
\begin{equation} \label{E:P=(h)} \renewcommand{\arraystretch}{1.3}
\begin{array}{rcl}
  P_{\a\b} & = & \half \,(h_{2\a,2\b+1} + h_{2\a+1,2\b} ) 
  \ + \ \ihalf\,(h_{2\a,2\b} - h_{2\a+1,2\b+1} ),\\
  P_{\a\barb} & = & \half\,(h_{2\a+1,2\b} - h_{2\a,2\b+1})
  \ + \ \ihalf\,(h_{2\a,2\b} + h_{2\a+1,2\b+1}) \,,\\
  P_{\a m} & = & h_{n,2\a+1} \ + \ \ti \, h_{n,2\a} \,,\\
  P_{mm} & = & -2\ti\, h_{nn} \,.
\end{array}\end{equation}
In particular, the coefficients on the right-hand side of \eqref{E:1} are the second-order differential invariants on $\cF_S$.  Indeed, the $-\ihalf P_{\a\barb}$ are the coefficients of the \emph{Levi form} in a local coordinate frame.  See Section \ref{S:local_comp_O}.  (In an abuse of language we will often refer to $P_{\a\barb}$ as the coefficients of the Levi form.)  The frame bundle $\cF_S$ admits a sub-bundle $\cP_S$ on which the $P_{\a m}$ and $P_{mm}$ vanish; see Lemma \ref{L:P_S}.

\begin{remark*}
Together the $P_{\a\b}$ and $P_{\a\barb}$ may be identified with the derivative of a Gauss map (see Section \ref{S:gauss}).  The are, respectively, the anti-hermitian and hermitian parts of the CR second fundamental form. 
\end{remark*}

%------------------------------------------------------------------------------
\subsection{Local coordinate computation} \label{S:local_comp_O}
%------------------------------------------------------------------------------

Here we convert the expressions of \S\ref{S:local_comp_w} to $z$ and $\bar z$ derivatives.  Recall $\del_z = \half ( \del_x - \ti \del_y)$ and $\del_{\bar z} = \half( \del_x + \ti \del_y)$, so that \eqref{E:df=h} and \eqref{E:P=(h)} yield
\begin{eqnarray}
  \nonumber
  f_{z^\a z^\b} & = & \fourth \, ( f_{x^\a x^\b} - f_{y^\a y^\b}) \ - \ 
  \ifourth \, ( f_{x^\a y^\b} + f_{y^\a x^\b}) 
  \ = \ -\ihalf P_{\a\b} \, , \\
  \label{E:levi}
  f_{z^\a \barz^\b} & = & \fourth \, ( f_{x^\a x^\b} + f_{y^\a y^\b} ) \ + \ 
  \ifourth \, ( f_{x^\a y^\b} - f_{y^\a x^\b}) 
  \ = \ - \ihalf \, P_{\a\barb} \, . 
\end{eqnarray}
So we have the power series representation for $y^m = f$ at $p$,
\begin{equation} \label{E:ps} \renewcommand{\arraystretch}{1.3}
\begin{array}{rcl}
  y^m & = & h_{2\a,2\b} \, x^\a x^\b \ + \ 2 \, h_{2\a,2\b+1} \, x^\a y^\b
  \ + \ h_{2\a+1,2\b+1} \, y^\a y^\b \ + \ \Delta \\
  & = & 
  \fourth (h_{2\a,2\b} - h_{2\a+1,2\b+1} - 2 \, \ti \, h_{2\a,2\b+1}) \, z^\a z^\b 
  \\ & &  + \ \half (h_{2\a,2\b} + h_{2\a+1,2\b+1} + 
               \ti\,(h_{2\a,2\b+1} - h_{2\a+1,2\b})) \, z^\a \barz^\b \\
  & & 
  + \ \fourth (h_{2\a,2\b} - h_{2\a+1,2\b+1} + 2 \, \ti \, h_{2\a,2\b+1}) \,
    \barz^\a \barz^\b \ + \ \Delta \\
  & = & -\ihalf P_{\a\b} z^\a z^\b \ - \ \ti\,P_{\a\barb} z^\a \barz^\b 
  \ + \ \ihalf\bar P_{\a\b} \barz^\a \barz^\b  \ + \ \Delta \, ,
\end{array}
\end{equation}
where $\Delta$ denotes the terms of degree greater than two, as well as the quadratic terms involving $x^m$.  

\begin{definition*}
From \eqref{E:levi} we see that $-\ihalf P_{\a\barb}$ is the \emph{Levi form}.  
\end{definition*}

\begin{remark*}
Classically, \emph{strong pseudoconvexity} is the condition that the Levi form be negative definite.  If we replace $e_0$ with $-e_0$, then the coefficients $P_{\a\barb}$ change sign.  So when working on the frame bundle $\cF_S$ we will define $S$ to be strongly pseudoconvex if the coefficients $P_{\a\barb} : \cF_S \to \bC$ define a definite form.  
\end{remark*}

\begin{definition} \label{D:SCLC}
A strongly pseudoconvex hypersurface $S$ is \emph{strongly $\bC$-linearly convex} (SCLC) if \emph{no} real line tangent to $\wh H_zS$ makes second order contact with $\wh S$ at $w \in \hat z$.  
\end{definition}

\noindent Let $\sfz = (\sfz^1,\ldots,\sfz^{m-1}) \in \bC^{m-1}$.  Define $P(\sfz,\sfz) = P_{\a\b}\sfz^\a\sfz^\b$ and $L(\sfz,\bar\sfz) = P_{\a\barb}\sfz^\a\bar\sfz^\b$.  From \eqref{E:ps} we see that the surface $S$ is strongly $\bC$--linearly convex if and only if
\begin{equation} \label{E:SCLC}
  0 \ \not= \ \ihalf P(\sfz,\sfz) \ + \ \ti L(\sfz,\bar\sfz) \ + \ \overline{\ihalf P(\sfz,\sfz)} 
  \ = \ -\tIm P(\sfz,\sfz) + \ti\,L(\sfz,\bar\sfz) 
  %\ihalf 
  %\left( \begin{array}{cc} \sfz^\a & \bar \sfz^\a \end{array}\right) \,
  %\left( \begin{array}{rr} P_{\a\b} & P_{\a\barb} \\ -\bar P_{\a\barb} 
  %& - \bar P_{\a\b} \end{array}\right) \,
  %\left( \begin{array}{c} \sfz^\b \\ \bar \sfz^\b \end{array} \right)
  %\ = \ \tRe(\ti P(z,z)) \ - \ |z|^2 \ = \ -\tIm(P(z,z)) \ - \ |z|^2 
\end{equation}
for all $0 \not= \sfz \in \bC^{m-1}$.  Equivalently, 
\begin{equation} \label{E:SCLC3}
  |\tIm P(\sfz,\sfz)| < |L(\sfz,\bar\sfz)| 
  \quad \forall \ 0 \not= \sfz \in \bC^{m-1} \,.  
\end{equation}
Equation \eqref{E:SCLC} can be expressed as 
\begin{equation} \label{E:SCLC2}
  0 \ \not= \ \ti\,\left( \begin{array}{cc} \sfz^t & \bar\sfz^t \end{array} \right) 
  \left( \begin{array}{cc} P & L \\ L^t & -\bar P \end{array} \right) 
  \left( \begin{array}{c} \sfz \\ \bar\sfz \end{array} \right) \,,
\end{equation}
for all $0 \not= \sfz \in \bC^{m-1}$, where $P = (P_{\a\b})$ and $L = (P_{\a\barb})$.  In particular the matrix above must be invertible.  Indeed, 
\begin{equation} \label{E:SCLC_inv}
  \left( \begin{array}{cc} P & L \\ L^t & -\bar P \end{array} \right)^{-1} \ = \ 
  \left( \begin{array}{cc} Q & M \\ M^t & -\bar Q \end{array} \right)
  \quad \hbox{ where } \quad \renewcommand{\arraystretch}{1.3}
  \begin{array}{l} Q = \overline{L^{-1}P} (P\overline{L^{-1}P} - L)^{-1}\,, \\
                   M = ( \bar P L^{-1} P - \bar L )^{-1} \, .  
  \end{array}
\end{equation}
Note that $Q^t = Q$ and $\bar M^t = -M$.

%------------------------------------------------------------------------------
\subsection{Second order projectively invariant tensors on \boldmath $S$ \unboldmath }\label{S:F2}
%------------------------------------------------------------------------------

In this section we illustrate the frame bundle construction of the second fundamental form of $S$.

Let $J_z : T_z\bP W \to T_z \bP W$ denote the complex structure on $\bP W$.  Given $z \in S$, let $H_z = T_z S \, \cap \, J_z(T_z S)$ denote the maximal complex subspace of $T_zS$.  Analogously define $\wh H_z S =  \wh T_zS \,\cap\, \sfJ(\wh T_zS) \subset W$.  Given $z \in \bP W$, let $\hat z = L_z \in W$ denote the corresponding (complex) line through the origin.  Then 
$$
  H_z S \ = \ L_z^* \ot ( \wh H_z S / L_z ) \, , 
$$
and $HS \to S$ defines a rank $m-1$ complex vector bundle over $S$.  Similarly, define the normal (complex) line bundle $NS \to S$ by 
$$
  N_z S \ = \ T_z \bP W / H_z S \ = \ L_z^* \ot ( W / \wh H_zS) \, .
$$

Given $f = (f_0 , \ldots , f_n) \in \cF_S$, set $z = [f_0] \in S$ and let $(f^0 , \ldots , f^n)$ denote the dual basis of $W^*$.  Then $\sff_\a = f^0 \ot ( f_\a \ \tmod \ L_z)$ defines a basis of $H_zS$, and $\sff_m = f^0 \ot ( f_m \ \tmod \ \wh H_z S)$ spans $N_zS$.  Let $(\sff^1,\ldots, \sff^{m-1})$ denote the dual basis of $H^*_zS = (H_zS)^*$.  Define 
\begin{eqnarray*}
  \mathbf{P}_f & = & P_{\a\b}\,\sff^\a\sff^\b\ot\sff_m 
  \ \in \ (\tSym^2 H^*_z S) \ot N_zS \,,
  \quad \hbox{ and } \\
  \mathbf{L}_f & = & P_{\a\barb}\,\sff^\a \bar\sff^\b \ot \sff_m 
  \ \in \ H^*_z S \,\ot\, \bar H^*_zS \,\ot\, N_zS \, .
\end{eqnarray*}
We claim that $\mathbf{P}$ and $\mathbf{L}$ descend to well-defined sections of $(\tSym^2 H^*) \ot NS$ and $H^*S \ot \bar H^*S \ot NS$, respectively, over $S$.  (The tensors $\mathbf{P}$ and $\mathbf{L}$ are respectively the anti-hermitian and hermitian parts of the second fundamental form of $S$.)  To establish the claim it suffices to show that $\mathbf{P}$ and $\mathbf{L}$ are constant on fibres of $\cF_S$. This is seen as follows.

First consider a change of frame (\S\ref{S:tw}) of the form 
\begin{equation} \label{E:block}
  \tilde f_0 \ = \ g^0_0 \, f_0 \, , \quad 
  \tilde f_\a \ = \ g_\a^\b \, f_\b \, , \quad 
  \tilde f_m \ = \ g^m_m \, f_m \, .  
\end{equation}
Such a change of frame is called a \emph{block fibre motion} on $\cF_S$.  
%Since $\cF_S \subset \cF \simeq \tSL(W)$ it is necessary that $\tdet(g)=g^0_0\,\tdet(g^\a_\b)\,g^m_m = 1$.  
Computing with \eqref{E:tO} and \eqref{E:1b} we see that the change in $P$ and $L$ is given by 
\begin{equation} \label{E:tQ}
  \wt{P}_{\a\b} \ = \
  \frac{P_{\c\e} \, g^\c_\a \, g^\e_\b}{g^0_0 \, g^m_m} 
  \quad \hbox{ and } \quad
  \wt{P}_{\a\barb} \ = \ 
  \frac{P_{\c\bare} \, g^\c_\a \, \bar g^\e_\b}{\bar g^0_0 \, g^m_m} \, .
\end{equation}
The transformation in the coefficients $P_{\a\b}$ and $P_{\a\barb}$ precisely cancels transformation in $\sff$ so that $\mathbf{P}_f = \mathbf{P}_{\tilde f}$ and $\mathbf{L}_f = \mathbf{L}_{\tilde f}$.

Next, consider a change of frame (\S\ref{S:tw}) of the form
\begin{equation} \label{E:shear}
  \tilde f_0 \ = \ f_0 \, , \quad 
  \tilde f_\a \ = \ f_\a \ + \ g^0_\a \, f_0 \, , \quad
  \tilde f_m  \ = \ f_m \ + \ g_m^0 \, f_0 \ + \ g^\a_m \, f_\a \, .
\end{equation} 
These changes of frame are \emph{shear fibre motions}.  The entire group of fibre motions on $\cF_S$ is generated by block and shear transformations.  Computing with \eqref{E:tO} and \eqref{E:1b} we see that $P$ and $L$ are unchanged by shear fibre motions: 
\begin{equation} \label{E:PLshear}
  \wt{P}_{\a\b} \ = \ P_{\a\b}
  \quad \hbox{ and } \quad 
  \wt{P}_{\a\barb} \ = \ P_{\a\b} \, .
\end{equation}
As a consequence of \eqref{E:tQ} and \eqref{E:PLshear} we see that $\mathbf{P}_f$ and $\mathbf{L}_f$ are constant under the fibre motions.  Our claim follows.  

The tensors $\mathbf{P}$ and $\mathbf{L}$ are projectively invariant.  To be precise, suppose that $\wt S$ is a second hypersurface that is projectively equivalent to $S$ via $A \in \tGL(W)$.  Note that $A$ naturally identifies $(\tSym^2 H_z^*S) \ot N_zS$ with $(\tSym^2H_{Az}^*\wt S) \ot N_{Az}\wt S$, and $H_z^*S\,\ot\,\bar H_{z}^* S\,\ot\,N_z S$ with $H^*_{Az}\wt S\,\ot\,\bar H^*_{Az} \wt S\,\ot\,N_{Az}\wt S$.  Thus we may define the pull-backs $A^* \mathbf{\wt{P}}$ and $A^*\mathbf{\wt L}$.  From \eqref{E:LinvarO} we have $P = \wt{P} \circ A$ and $Q = \wt{Q} \circ A$.  Thus $A^* \wt{\mathbf{P}} = \mathbf{P}$ and $A^*\mathbf{\wt L} = \mathbf{L}$.  Whence projective equivalence.  

\begin{remark*}
G. Jensen \cite{MR739891} proved that a nondegenerate hypersurface $S$ has $\mathbf{P} \equiv 0$ if and only if $S$ is projectively equivalent to a quadric hypersurface (Section \ref{S:hyperquadric}).  In related work Detraz and Tr\'epreu \cite{MR1094711} showed that the hyperquadric appears as one of two types of hypersurfaces in $\bC^n$ that are characterized by an elliptic system. 
\end{remark*}

%\begin{remark} \label{R:P=0}
%From \eqref{E:tQ} and \eqref{E:PLshear} we see that, when restricted to a fibre of $\pi : \cF_S \to S$, $P_{\a\b}$ is either identically zero or no where vanishing.
%\end{remark}

\begin{remark*}
In the case that $S$ is strongly $\bC$-linearly convex the inverse matrix \eqref{E:SCLC_inv} exists.  If $Q = (Q^{\a\b})$ and $M = (Q^{\a\barb})$, then 
\begin{eqnarray*}
  \mathbf{Q}_f & = & Q^{\a\b}\,\sff_\a\sff_\b\ot\sff^m 
  \ \in \ (\tSym^2 H_z S) \ot N^*_zS \,,
  \quad \hbox{ and } \\
  \mathbf{M}_f & = & -Q^{\a\barb}\,\sff_\a \bar\sff_\b \ot \sff^m 
  \ \in \ H_z S \,\ot\, \bar H_zS \,\ot\, N^*_zS \, .
\end{eqnarray*}
similarly define projectively invariant tensors $\mathbf{Q}$ and $\mathbf{M}$ on $S$.  Here $\sff^m \in N^*_zS$ is dual to $\sff_m \in N_zS$.  As we will see in \eqref{E:II*}, $\mathbf{Q}$ and $\mathbf{M}$ are pull-backs (under a `lifted' Gauss map) of the second fundamental form of the dual-hypersurface.
\end{remark*}

%------------------------------------------------------------------------------
\section{A reduction of the bundle $\cF_S$}\label{S:P_S}
%------------------------------------------------------------------------------

In this section we will show that the second order coefficients $h_{\sigma n}$ and $h_{nn}$ (equivalently, for $P_{\a m}$ and $P_{mm}$) may be normalized to zero.  That is, $\cF_S$ admits a sub-bundle $\cP_S$ of adapted frames over $S$ on which the coefficients vanish.

%------------------------------------------------------------------------------
\subsection{Normalizations}\label{S:norms}
%------------------------------------------------------------------------------

\begin{lemma} \label{L:P_S}
Let $S \subset \bP W$ be a CR-hypersurface.  There exists a sub-bundle $\cP_S \subset \cF_S$ on which the coefficients $P_{\a m}$ and $P_{mm}$ vanish.  In particular the restriction of the Maurer-Cartan form $\Omega$ to $\cP_S$ satisfies 
\begin{subequations} \label{SE:normP}
\begin{eqnarray}
  \label{E:P_S0}
  \barO^m_0 & = & \O^m_0 \\
  \label{E:P_S1}
   \Omega^m_\a & = & P_{\a\b} \, \Omega^\b_0
   \ + \ P_{\a\barb}\,\bar\Omega^\a_0\, ,\\
  \label{E:P_S2}
  \Omega^0_0 \ + \ \bar\Omega^m_m & = & \bar\Omega^0_0 \ + \ \Omega^m_m \, ,
\end{eqnarray} 
with $P_{\b\a} = P_{\a\b}$ and $\bar P_{\a\barb} = -P_{\b\bara}$.
\end{subequations}
\end{lemma}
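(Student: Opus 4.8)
The plan is to first observe that \eqref{E:P_S0}--\eqref{E:P_S2} are essentially automatic once the sub-bundle is in hand. Indeed \eqref{E:P_S0} is just \eqref{E:w^n+1_0}, which holds on all of $\cF_S$; the symmetry relations $P_{\b\a}=P_{\a\b}$ and $\bar P_{\a\bar\b}=-P_{\b\bar\a}$ are precisely the ones produced when Cartan's Lemma \ref{L:CartanO} was applied to \eqref{E:dOm0}; and \eqref{E:P_S1}, \eqref{E:P_S2} are then simply the restrictions of \eqref{E:1b} and \eqref{E:1a} to any sub-bundle on which $P_{\a m}$ and $P_{mm}$ vanish. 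So the entire content of the lemma is the \emph{existence} of a sub-bundle $\cP_S\subset\cF_S$ on which the functions $P_{\a m},P_{mm}:\cF_S\to\bC$ are identically zero.

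To produce it I track how the pair $(P_{\a m},P_{mm})$ changes under a fibre motion. By \S\ref{S:F2} the fibre motions are generated by the block motions \eqref{E:block} and the shear motions \eqref{E:shear}, and only a small family of shears is needed for the normalization: fixing $f\in\cF_S$ and parameters $(c_\a)\in\bC^{m-1}$, $t\in\bR$, take $\tilde f_0=f_0$, $\tilde f_\a=f_\a+c_\a f_0$, $\tilde f_m=f_m+\ti\,t\,f_0$. Substituting into the change-of-frame formula \eqref{E:tO}, using \eqref{E:1b}, \eqref{E:1a} and the relation $\bar\Omega^m_0=\Omega^m_0$ valid on $\cF_S$, and re-expressing everything in terms of the new semi-basic forms $\wt\Omega^\a_0,\wt{\bar\Omega}^\a_0,\wt\Omega^m_0$, one finds that the second-fundamental-form coefficients are unchanged, $\wt P_{\a\b}=P_{\a\b}$ and $\wt P_{\a\bar\b}=P_{\a\bar\b}$, while
$$\wt P_{\a m}=P_{\a m}+c_\a\,,\qquad \wt P_{mm}=P_{mm}-4\ti\,t$$
(the last using that $P_{mm}$ is purely imaginary).

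Consequently $(c_\a,t)\mapsto(\wt P_{\a m},\wt P_{mm})$ is an affine bijection $\bC^{m-1}\times\bR\to\bC^{m-1}\times\ti\bR$; in particular the map $\cF_S\to\bC^{m-1}\times\ti\bR$, $e\mapsto(P_{\a m}(e),P_{mm}(e))$, is surjective on every fibre of $\cF_S\to S$ and its fibrewise differential along the shear directions is $(c_\a,t)\mapsto(c_\a,-4\ti t)$, which is onto. Hence $0$ is a regular value, $\cP_S:=\{e\in\cF_S\mid P_{\a m}(e)=0,\ P_{mm}(e)=0\}$ is a smooth submanifold meeting every fibre, and (the displayed transformation law exhibits the structure group of $\cF_S$ acting transitively on $\bC^{m-1}\times\ti\bR$, so $\cF_S\to\bC^{m-1}\times\ti\bR$ is equivariant onto a homogeneous space and) $\cP_S$ is the corresponding reduction of $\cF_S$ to the subgroup $H$ fixing the normalization, generated by the block motions and the shears with $c_\a=t=0$. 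Restricting \eqref{E:w^n+1_0}, \eqref{E:1b} and \eqref{E:1a} to $\cP_S$ then yields \eqref{E:P_S0}--\eqref{E:P_S2}.

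The main obstacle is the bookkeeping in \eqref{E:tO}: one must verify that the shear parameter $t=\tIm g^0_m$ enters $\wt P_{mm}$ with a nonzero coefficient while leaving $\wt P_{\a m}$, $\wt P_{\a\b}$ and $\wt P_{\a\bar\b}$ unchanged, so that $P_{\a m}$ and $P_{mm}$ can be killed simultaneously and independently; by contrast \eqref{E:tQ} shows a block motion merely rescales $P_{mm}$, so it cannot do this. Care is also needed to apply $\bar\Omega^m_0=\Omega^m_0$ at the right places, since that reality relation is exactly what collapses the would-be $\bar\Omega^m_0$ contributions into the single $\Omega^m_0$ term carrying $\wt P_{mm}$.
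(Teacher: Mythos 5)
Your proof is correct and takes essentially the same route as the paper: normalizing $P_{\a m}$ and $P_{mm}$ by shear fibre motions computed from \eqref{E:tO} and \eqref{E:1}, your transformation formulas being exactly the paper's specialized to the subfamily $g^\a_m=0$, $g^0_m=\ti t$ (the paper works with the general shear \eqref{E:shear} and chooses $g^0_\a$ and $\tIm g^0_m$), with your regular-value remark merely making explicit the smoothness of the zero set that the paper leaves implicit. The only inaccuracy is the parenthetical description of the stabilizer as generated by block motions and the shears with $c_\a=t=0$: by Remark \ref{R:norms} the shears preserving $\cP_S$ are those with arbitrary $g^\a_m$ whose $g^0_\a$ and $\tIm(g^0_m)$ are then determined by \eqref{E:normg0}, so the true structure group is larger; this does not affect the existence argument, which is all the lemma requires.
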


\begin{proof}
Since \eqref{E:w^n+1_0} holds on $\cF_S$, equation \eqref{E:P_S0} is immediate.  By \eqref{E:1} it remains to show that $P_{\a m}$ and $P_{mm}$ can be normalized to zero.  Consider a change of frame (\S\ref{S:tw}) of the form \eqref{E:shear}.  Set $h^0_m = g^0_\a\,g^\a_m - g^0_m = (g^{-1})^0_m$.  Then \eqref{E:tO} yields
$$ \begin{array}{rclrcl}
  \wt\O^0_0 & = & \O^0_0 \ - \ g^0_\a \,\O^\a_0 \ + \ h^0_m\,\O^m_0 \,,\quad &
  \wt\Omega^\a_0 & = & \Omega^\a_0 \ - g^\a_m \, \Omega^m_0 \,,\quad
  \wt\Omega^m_0 \ = \ \Omega^m_0 \,,\\
  \wt\Omega^m_\a & = & \Omega^m_\a \ + \ g^0_\a \, \Omega^m_0 \,,\quad &
  \wt\O^m_m & = & \O^m_m \ + \ g^\a_m\,\O^m_\a \ + \ g^0_m\,\O^m_0 \, .
\end{array} $$
The coefficients $P_{\a m}$ transform as $\wt{P}_{\a m} = P_{\a m} + g^0_\a + P_{\a\b} \, g^\b_m + P_{\a\barb}\,\bar g^\b_m$.  By selecting $g^0_\a$ appropriately we may may normalize to 
\begin{equation} \label{E:Pam=0}
  P_{\a m} \ = \ 0  \, .
\end{equation}
Similarly, \eqref{E:1a} yields $\wt{P}_{mm} = P_{mm} + (g^0_\a\, g^\a_m - \bar g^0_\a\,\bar g^\a_m ) - 2\,(g^0_m - \bar g^0_m)$.  So we may use $g^0_m$ to normalize $\tIm(P_{mm}) = 0$.  Since $P_{mm}$ takes values in $\ti\bR$, we have 
\begin{equation} \label{E:Pmm=0}
  P_{mm} \ = \ 0 \, .
\end{equation}
Let $\cP_S \subset \cF_S$ denote the sub-bundle on which \eqref{E:Pam=0} and \eqref{E:Pmm=0} hold.
\end{proof}

\begin{corollary} \label{C:P_S}
The second-order invariants $h_{sn}$, $2 \le s \le n$, vanish on $\cP_S$.  In particular, the following relations hold on $\cP_S$:
$$ 
%\renewcommand{\arraystretch}{1.3}
%\begin{array}{ll}
  0 \ = \ \w^{n+1}_0 \ = \ \w^n_1 \,, \quad
  \w^{n+1}_1 \ = \ \w^n_0 \,,\quad
  \w^{n+1}_\sigma \ = \ h_{\sigma\tau}\,\w^\tau_0 \,, \quad 
  \w^{n+1}_n \ = \ \w^1_0 \ = \ -\w^n_{n+1} \, . 
%\end{array} 
$$
\end{corollary}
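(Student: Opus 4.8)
The plan is to first translate the two conditions cutting out $\cP_S$ into vanishing statements about the second-order coefficients $h_{st}$, and then to read the asserted identities directly off \eqref{E:h2} together with the symmetry relations \eqref{E:w}. For the first step: by Lemma \ref{L:P_S}, $\cP_S\subset\cF_S$ is the locus where $P_{\a m}=0$ and $P_{mm}=0$. Feeding this into the dictionary \eqref{E:P=(h)} --- namely $P_{\a m}=h_{n,2\a+1}+\ti\,h_{n,2\a}$ and $P_{mm}=-2\ti\,h_{nn}$ --- gives $h_{n,2\a}=h_{n,2\a+1}=0$ for all $0<\a<m$ together with $h_{nn}=0$. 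Since the indices $\{2\a,\,2\a+1 : 0<\a<m\}$ exhaust $\{2,\ldots,n-1\}$ and $h$ is symmetric, this is exactly $h_{sn}=h_{ns}=0$ for $2\le s\le n$; the normalization $h_{1n}=h_{n1}=1$ from \eqref{E:h2_id} is untouched.

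Next I would evaluate \eqref{E:h2}. On $\cP_S$ it reads $\w^{n+1}_s=h_{st}\,\w^t_0$, and $h_{1s}=0$ for $s<n$ while $h_{1n}=1$ by \eqref{E:h2_id}. Splitting the sum over $t$ into the three ranges $t=1$, $1<t<n$, $t=n$ and applying the vanishing from the previous paragraph: the case $s=1$ gives $\w^{n+1}_1=\w^n_0$; the case $s=\sigma$ with $1<\sigma<n$ kills the $t=1$ and $t=n$ terms, leaving $\w^{n+1}_\sigma=h_{\sigma\tau}\,\w^\tau_0$; and the case $s=n$ leaves only the $t=1$ term, giving $\w^{n+1}_n=\w^1_0$.

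Finally I would invoke \eqref{E:w}: with $n=2m$, the relation $\w^{2a}_{2b+1}=-\w^{2a+1}_{2b}$ at $(a,b)=(m,0)$ gives $\w^n_1=-\w^{n+1}_0$, which vanishes by \eqref{E:w^n+1_0}, and at $(a,b)=(m,m)$ gives $\w^n_{n+1}=-\w^{n+1}_n=-\w^1_0$ using the previous step. Together with $\w^{n+1}_0=0$ (valid already on $\cF_S$ by \eqref{E:w^n+1_0}) these assemble into the displayed chain of equalities, and the claim that the $h_{sn}$ vanish on $\cP_S$ is the content of the first step.

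I expect no serious obstacle: the whole argument is index bookkeeping. The only point that needs genuine care is the count in the first step --- verifying that $P_{\a m}$ and $P_{mm}$ together annihilate precisely $\{h_{sn} : 2\le s\le n\}$ and in particular do not disturb $h_{1n}=1$ --- after which everything follows mechanically from \eqref{E:h2}, \eqref{E:h2_id}, and \eqref{E:w}.
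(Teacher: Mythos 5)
Your proof is correct and follows exactly the route the paper intends: its proof simply cites \eqref{E:w}, \eqref{E:w^n+1_0}, \eqref{E:h2_id}, \eqref{E:h2}, \eqref{E:P=(h)} and Lemma \ref{L:P_S}, and your argument is a careful unpacking of those same ingredients, including the index count showing $P_{\a m}=0$ and $P_{mm}=0$ give precisely $h_{sn}=0$ for $2\le s\le n$ while leaving $h_{1n}=1$ intact.
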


\begin{proof}
The corollary follows from \eqref{E:w}, \eqref{E:w^n+1_0}, \eqref{E:h2_id}, \eqref{E:h2}, \eqref{E:P=(h)} and Lemma \ref{L:P_S}.
\end{proof}

\begin{remark} \label{R:norms}
The bundle $\cP_S$ is of (real) dimension $2 m^2 + 3$ and is preserved by the shear fibre motions \eqref{E:shear} satisfying
\begin{eqnarray}
  \label{E:normg0}
  g^0_\a \ + \ P_{\a\b} \, g^\b_m \ + \ P_{\a\barb}\,\bar g^\b_m & = & 0 
  \,, \qquad \hbox{ and }\\
  \nonumber
  4\ti\,\tIm(g^0_m) \ = \ 2\,(g^0_m - \bar g^0_m) & = & 
  (g^0_\a\, g^\a_m \ - \ \bar g^0_\a\,\bar g^\a_m ) \ = \ 
  2\ti\,\tIm(g^0_\a\,g^\a_m) \\
  \nonumber
  & = & -P_{\a\b}\,g^\a_m\,g^\b_m \ + \ \overline{P_{\a\b}\,g^\a_m\,g^\b_m}
  \ - \ 2\,P_{\a\barb}\,g^\a_m\,\bar g^\b_m \, .
\end{eqnarray}
\end{remark}

\begin{lemma} \label{L:P_Spreserved}
Let $S \subset \bP W$ be a CR-hypersurface and $A \in \tGL(W)$.  Then $L_A(\cP_S) = \cP_{\wt S}$, where $\wt S = A(S) \subset \bP W$.
\end{lemma}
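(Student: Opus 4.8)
The plan is to deduce the lemma directly from the left-invariance \eqref{E:LinvarO} of the Maurer--Cartan form, using the fact that the coefficients $P_{\a m}$ and $P_{mm}$ are intrinsic to the restriction $\Omega_{|\cF_S}$.

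First I would recall from Remark \ref{R:LAFS} that $L_A$ restricts to a diffeomorphism $L_A : \cF_S \to \cF_{\wt S}$ covering $A : S \to \wt S$, with inverse $L_{A^{-1}}$. Hence it suffices to prove that $L_A$ carries the zero locus $\cP_S = \{\,P_{\a m} = 0 = P_{mm}\,\} \subset \cF_S$ onto the zero locus $\cP_{\wt S} = \{\,\wt P_{\a m} = 0 = \wt P_{mm}\,\} \subset \cF_{\wt S}$, and for this it is enough to check the identities of functions $\wt P_{\a m} \circ L_A = P_{\a m}$ and $\wt P_{mm} \circ L_A = P_{mm}$ on $\cF_S$.

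To establish these, observe that since $L_A^*\Omega = \Omega$ on all of $\tGL(W)$, the map $L_A^*$ identifies $\Omega_{|\cF_{\wt S}}$ with $\Omega_{|\cF_S}$; in particular it carries the structure equation \eqref{E:dOm0} on $\cF_{\wt S}$ to the same equation on $\cF_S$, and carries the linearly independent semi-basic forms on $\cF_{\wt S}$ to those on $\cF_S$. Now the coefficients $P_{\a\b}, P_{\a\barb}, P_{\a m}, P_{mm}$ appearing in \eqref{E:1} are the \emph{unique} functions, furnished by Cartan's Lemma \ref{L:CartanO}, for which \eqref{E:1} holds — uniqueness because the semi-basic forms are linearly independent. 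Applying $L_A^*$ to the version of \eqref{E:1} on $\cF_{\wt S}$ therefore produces the version on $\cF_S$ with $P_{\bullet}$ replaced by $\wt P_{\bullet}\circ L_A$; by uniqueness $\wt P_{\bullet}\circ L_A = P_{\bullet}$ for each of these coefficients. In particular $\wt P_{\a m}\circ L_A = P_{\a m}$ and $\wt P_{mm}\circ L_A = P_{mm}$, whence $L_A(\cP_S) = \cP_{\wt S}$.

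There is no essential obstacle here; the only point requiring care is the observation that Cartan's Lemma yields \emph{uniquely} determined coefficients, so that $P_{\a m}$ and $P_{mm}$ depend only on $\Omega_{|\cF_S}$ and hence transform correctly under the left-invariant $L_A$. Alternatively one could argue via Remark \ref{R:hinvar}, which gives $h(e) = \wt h(L_A e)$, together with \eqref{E:P=(h)}, which expresses $P_{\a m}$ and $P_{mm}$ as linear combinations of $h_{n,2\a}$, $h_{n,2\a+1}$ and $h_{nn}$; the vanishing locus of these $h$-coefficients is then manifestly preserved by $L_A$.
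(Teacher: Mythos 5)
Your argument is correct and follows essentially the same route as the paper: both rest on Remark \ref{R:LAFS} ($L_A(\cF_S)=\cF_{\wt S}$) together with the left-invariance \eqref{E:LinvarO} of $\Omega$, the paper phrasing this as ``the normalizations \eqref{SE:normP} hold on $L_A(\cP_S)$'' while you spell out the same fact coefficientwise, using the uniqueness of the Cartan-Lemma coefficients to get $\wt P_{\a m}\circ L_A = P_{\a m}$ and $\wt P_{mm}\circ L_A = P_{mm}$. The extra care you take with uniqueness (and the alternative via Remark \ref{R:hinvar} and \eqref{E:P=(h)}) is sound but not a different method.
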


\begin{proof}
In Remark \ref{R:LAFS} we observed that $L_A(\cF_S) = \cF_{\wt S}$.  Thus $L_A(\cP_S) \subset \cF_{\wt S}$.  From left-invariance \eqref{E:LinvarO} of $\Omega$ we see that $\eqref{SE:normP}$ holds on $L_A(\cP_S)$.  Hence $L_A(\cP_S) = \cP_{\wt S}$.
\end{proof}

\begin{proposition} \label{P:proj_equivP}
Two CR-hypersurfaces $S , \wt{S} \subset \bP W$ are projectively equivalent if and only if there exists a smooth map $\phi : \cP_S \to \cP_{\wt S}$ such that $\phi^*(\Omega_{|\cP_{\wt S}}) = \Omega_{|\cP_S}$.  (Equivalently, $\phi^*(\omega_{|\cP_{\wt S}}) = \omega_{|\cP_S}$.)
\end{proposition}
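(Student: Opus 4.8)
The plan is to follow the template of Proposition~\ref{P:proj_equiv}, the one new point being that $\cP_S$ is a \emph{proper} sub-bundle of $\cF_S$, so that Proposition~\ref{P:proj_equiv} does not apply verbatim; instead I would return to Cartan's Theorem~\ref{T:Cartan}, checking that its hypotheses survive restriction from $\cF_S$ to $\cP_S$.

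For the forward implication I would start from $A \in \tGL(W)$ with $\wt S = A(S)$, invoke Lemma~\ref{L:P_Spreserved} to get $L_A(\cP_S) = \cP_{\wt S}$, and set $\phi := L_A|_{\cP_S}\colon \cP_S \to \cP_{\wt S}$.  The left-invariance \eqref{E:LinvarO} of $\Omega$ (equivalently \eqref{E:Linvar} of $\w$), together with the fact that pullback commutes with restriction to a submanifold, then gives $\phi^*(\Omega_{|\cP_{\wt S}}) = \Omega_{|\cP_S}$, and likewise for $\w$.

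For the reverse implication, suppose $\phi\colon \cP_S \to \cP_{\wt S}$ satisfies $\phi^*(\Omega_{|\cP_{\wt S}}) = \Omega_{|\cP_S}$.  First I would observe that, since $\td$ commutes with pullback along the inclusion $\cP_S \hookrightarrow \cF \simeq \tGL(W)$, the restricted form $\eta := \Omega_{|\cP_S}$ on $M := \cP_S$ still satisfies the Maurer--Cartan equation \eqref{E:mcC}, so Theorem~\ref{T:Cartan} applies with $G = \tGL(W)$ and $\vartheta = \Omega$.  Writing $\iota\colon \cP_S \hookrightarrow \tGL(W)$ and $\iota'\colon \cP_{\wt S} \hookrightarrow \tGL(W)$ for the inclusions, both $\iota$ and $\iota'\circ\phi$ pull $\Omega$ back to $\eta$ (the latter because $(\iota'\circ\phi)^*\Omega = \phi^*(\Omega_{|\cP_{\wt S}}) = \eta$).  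The uniqueness clause of Theorem~\ref{T:Cartan} then yields, on a connected neighborhood $U$ of any point, an $A \in \tGL(W)$ with $\iota'\circ\phi = L_A\circ\iota$ on $U$; that is, $\phi(e) = Ae$ for $e \in U$, so $L_A(U) \subset \cP_{\wt S} \subset \cF_{\wt S}$.  Since $L_A(\cF_S) = \cF_{A(S)}$ by Remark~\ref{R:LAFS}, projecting to $\bP W$ shows that an open subset of $A(S)$ lies in $\wt S$; as both are hypersurfaces this forces $A(S) = \wt S$ near the relevant points, so $S$ and $\wt S$ are projectively equivalent (and when $\cP_S$ is connected a single $A$ works, exactly as in Proposition~\ref{P:proj_equiv}).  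The parenthetical statement for $\w$ is immediate from \eqref{E:W=(w)} and \eqref{E:w=(W)}, which recover $\w$ and $\Omega$ from one another pointwise.

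The step I expect to be the only real obstacle is precisely this reverse direction: one must resist extending $\phi$ to a map $\cF_S \to \cF_{\wt S}$ and quoting Proposition~\ref{P:proj_equiv}, and instead verify that Cartan's Theorem~\ref{T:Cartan} is already available on the sub-bundle $\cP_S$ --- which it is, because the Maurer--Cartan equation is inherited under restriction and the inclusion $\cP_S \hookrightarrow \tGL(W)$ is an admissible competitor in the uniqueness statement.  Everything else is bookkeeping with \eqref{E:LinvarO}, Lemma~\ref{L:P_Spreserved}, and Remark~\ref{R:LAFS}.
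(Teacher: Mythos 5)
Your proposal is correct and is essentially the paper's own argument: the paper proves this proposition by declaring its proof identical to that of Proposition~\ref{P:proj_equiv}, i.e.\ by applying Cartan's Theorem~\ref{T:Cartan} with $G = \tGL(W)$, $\vartheta = \Omega$, $M = \cP_S$, $\eta = \Omega_{|\cP_S}$, $f_1$ the inclusion and $f_2 = \phi$, with the forward direction supplied by Lemma~\ref{L:P_Spreserved} and left-invariance \eqref{E:LinvarO}, exactly as you do. Your extra remarks (that the Maurer--Cartan equation survives restriction to the sub-bundle and that the inclusion $\cP_S \hookrightarrow \tGL(W)$ is an admissible competitor in the uniqueness clause) simply make explicit what the paper leaves implicit.
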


\noindent The proof of Proposition \ref{P:proj_equivP} is identical to that of Proposition \ref{P:proj_equiv}.

%------------------------------------------------------------------------------
\subsection{Example: the hyperquadric} \label{S:hyperquadric}
%------------------------------------------------------------------------------

The homogenous model of a strongly pseudoconvex hypersurface in $\bC\bP^n$ is the hyperquadric.  Fix a frame $\sfe = (\sfe_0,\ldots, \sfe_{n+1})$ of $V$ in $\cF$ and let $\sff=(\sff_0,\ldots,\sff_{m})$ be the corresponding basis of $W$.  Define linear coordinates $z = z^a \sff_a$ on $W$.  Let $[z] = [z^0: \cdots:z^{m}]$ be the corresponding complex homogeneous coordinates on $\bP W$.  Define 
$$
  q(z) \ = \ \bar z^t Q z \ = \ 
  \ti ( z^0 \bar z^m - z^m \bar z^0 ) \ + \ \sum_\a z^\a \bar z^\a \, .
$$
Let $\tSU(1,m) = \tSU(W,q) \subset \tGL(W)$ be the subgroup stabilizing $q$.  Then the Lie algebra $\fsu(1,m)$ is given by matrices $X = (X^a_b) \in \fgl(W)$ satisfying the following:
\begin{equation} \label{E:su}
  \renewcommand{\arraystretch}{1.3}
  \begin{array}{l}
   0 \ = \ \bar X^0_0 + X^m_m \ = \  \bar X^m_0 - X^m_0 
     \ = \ \bar X^0_m - X^0_m \ = \ \bar X^m_\a - \ti X^\a_0 
     \ = \ \bar X^\a_m + \ti X^0_\a \, , \\
   (X^\a_\b) \in \mathfrak{u}(m-1) \, , \quad \hbox{ and } \quad 
   \mathrm{Tr}(X) = 0 \, .
  \end{array} 
\end{equation}

Let $S = \{ q=0 \} \subset \bP W$.  Note that $\sfe \in \cF_S$.  Define $\cG = \tSU(1,m) \cdot \sfe \subset \cF_S$.  Then $\cG$ is a sub-bundle of the adapted frames over $S$, and is naturally identified with $\tSU(1,m)$.  The Maurer-Cartan form, when restricted to $\cG$, takes values in $\fsu(1,m)$.  In particular, 
$$
  \Omega^m_\a \ = - \ti \, \bar\Omega^\a_0 
  \qquad \hbox{ and } \qquad 
  \barO^0_0 \ + \ \O^m_m \ = \ 0 \, .
$$
We see that $\cG \subset \cP_S$, and $P_{\a\b} = 0$  and $P_{\a\barb} = -\ti\d_{\a\b}$ on $\cG$.  

%------------------------------------------------------------------------------
\subsection{An \boldmath $\w$--coframe on $\cP_S$ \unboldmath}
%------------------------------------------------------------------------------

By Corollary \ref{C:P_S} we have 
$0 = h_{\sigma n} = h_{nn}$ on the sub-bundle $\cP_S$.  Differentiating these expressions and applying \eqref{E:h2_id}, \eqref{E:h3}, \eqref{E:dh2_id} and \eqref{E:w} yields $0 = h_{1sn}$ and
\begin{subequations} \label{E:w0}
\begin{eqnarray}
  \label{E:w1s}
  \w^1_\sigma \ + \ h_{\sigma\tau}\,\w^\tau_n & = & 
  -h_{\sigma\tau n}\,\w^\tau_0 \ - \ h_{\sigma nn}\,\w^n_0 \,,\\
  \label{E:w0s}  
  \w^0_\sigma \ + \ J_\sigma^\nu\,h_{\nu\tau}\,\w^\tau_n & = & 
  -J^\nu_\sigma\,h_{\nu n \tau}\,\w^\tau_0 
  \ - \ J^\nu_\sigma\,h_{\nu nn}\,\w^n_0 \,,\\
  \label{E:w1n}
  2 \, \w^0_{n+1} \ \stackrel{\eqref{E:w}}{=} \ -2\,\w^1_n 
  & = & h_{nn\tau}\,\w^\tau_0 \ + \ h_{nnn}\,\w^n_0 \, .
\end{eqnarray}\end{subequations}
Note that \eqref{E:w0s} is a consequence of \eqref{E:Jh} and \eqref{E:w1s}.

\begin{lemma} \label{L:PSframe}
The 
$$
  E_\w(\cP_S) \ := \ \{ \w^0_0 ,\, \w^t_0 ,\, \w^{2\a}_{2\b} ,\, \w^{2\a+1}_{2\b} 
  ,\, \w^0_n ,\, \w^\tau_n ,\, \w^n_n \}
$$
form a coframing of $\cP_S$.  The remaining components of $\w$ are given by \eqref{E:w}, Corollary \ref{C:P_S}, \eqref{E:w1s} and \eqref{E:w1n}.
\end{lemma}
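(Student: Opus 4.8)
The plan is to establish two facts: first, that the restriction to $\cP_S$ of every component $\w^j_k$ is a smooth-function linear combination of the forms in $E_\w(\cP_S)$; second, that $E_\w(\cP_S)$ consists of $1+n+2(m-1)^2+1+(n-2)+1 = 2m^2+3$ forms, which is $\dim_\bR\cP_S$ by Remark~\ref{R:norms}. Granting the first fact, the forms in $E_\w(\cP_S)$ span $T^*_p\cP_S$ at every $p$, since the pullbacks of the $\w^j_k$ already do ($\w$ coframes $\cF\simeq\tGL(V,\sfJ)$). Granting the second, a spanning set whose cardinality equals the dimension is a basis, so $E_\w(\cP_S)$ is a coframing; and the ``remaining components'' assertion is exactly the content of the first fact.

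For the first fact I would argue column by column, using \eqref{E:w} to reduce each odd column $\w^\bullet_{2b+1}$ to the even column $\w^\bullet_{2b}$ (corresponding entries agree up to sign), so that only the even columns $\w^\bullet_{2b}$, $0\le b\le m$, need be examined. Column $0$: the entries $\w^0_0,\dots,\w^n_0$ lie in $E_\w(\cP_S)$, and $\w^{n+1}_0=0$ by \eqref{E:w^n+1_0}. An even column $\w^\bullet_{2\b}$ with $1\le\b\le m-1$: rows $2\a,2\a+1$ with $1\le\a\le m-1$ are in $E_\w(\cP_S)$; rows $0$ and $1$ are given by \eqref{E:w0s} and \eqref{E:w1s}, which write $\w^0_{2\b},\w^1_{2\b}$ as combinations of $\w^\tau_n,\w^\tau_0,\w^n_0$; and rows $n,n+1$ are given by Corollary~\ref{C:P_S} (e.g.\ $\w^{n+1}_\sigma=h_{\sigma\tau}\w^\tau_0$) together with \eqref{E:w}. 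The last even column $\w^\bullet_n$: rows $0$, $\tau$ ($2\le\tau\le n-1$) and $n$ are in $E_\w(\cP_S)$; row $1$ is $\w^1_n$, given by \eqref{E:w1n}; and row $n+1$ is $\w^{n+1}_n=\w^1_0$ by Corollary~\ref{C:P_S}. This exhausts every $\w^j_k$.

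No new computation is needed for \eqref{E:w0}: it has already been derived in the text by differentiating $0=h_{\sigma n}=h_{nn}$ on $\cP_S$; and \eqref{E:w0s} is in turn a consequence of \eqref{E:w1s} and \eqref{E:Jh} (which itself comes from \eqref{E:w} and Corollary~\ref{C:P_S}), so the references cited in the lemma do suffice for the remaining components. The only real work is the bookkeeping: checking that the chain of substitutions ``\eqref{E:w^n+1_0}, then Corollary~\ref{C:P_S}, then \eqref{E:w0}, then \eqref{E:w}'' reaches all $(2m+2)(m+1)$ independent entries of $\w$ without circularity, and that the cardinality count indeed matches. I expect that verification to be the main (though routine) obstacle; once the spanning property is in hand, the dimension count of Remark~\ref{R:norms} closes the argument, with no separate linear-independence check required.
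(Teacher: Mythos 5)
Your proposal is correct and follows the same route as the paper: the paper's proof is precisely the observation that the relations \eqref{E:w}, \eqref{E:w^n+1_0}, Corollary \ref{C:P_S} and \eqref{E:w0} express every remaining component of $\w$ in terms of $E_\w(\cP_S)$, so that the listed forms span the pulled-back coframe, combined with the count $|E_\w(\cP_S)| = 2m^2+3 = \dim_\bR\cP_S$ from Remark \ref{R:norms}. Your column-by-column bookkeeping and the spanning-plus-cardinality argument are just an explicit write-up of that dimension count, and your tallies ($2m^2+3$ forms, all $2(m+1)^2$ entries of $\w$ reached) check out.
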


\noindent The claim that $E_\w(\cP_S)$ is a coframing on $\cP_S$ follows by dimension count.  See Remark \ref{R:norms}.  

\begin{corollary}  \label{C:PSframe}
Two CR-hypersurfaces $S , \wt S \in \bP W$ are projectively equivalent if and only if there exists a smooth map $\phi : \cP_{S} \to \cP_{\wt S}$ such that 
\begin{equation} \label{E:phi*1}
  \phi^*E_\w(\cP_{\wt S}) \ = \ E_\w(\cP_S) \,,
\end{equation}
and 
\begin{equation} \label{E:phi*2} \renewcommand{\arraystretch}{1.3}
\begin{array}{rcl}
   h_{\s\t} & = & {\wt h}_{\s\t} \circ \phi \, ,\quad
   h_{\s\t n} \  = \ {\wt h}_{\s\t n} \circ \phi \,,\\
   h_{\s nn} & = & {\wt h}_{\s nn} \circ \phi \,,\quad
   h_{nnn} \ = \ {\wt h}_{nnn} \circ \phi \, .
\end{array}\end{equation}
Equivalently, $\phi^* (\w_{|\cP_{\wt S}}) = \w_{|\cP_S}$.
\end{corollary}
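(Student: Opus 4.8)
The plan is to obtain the corollary as a formal consequence of Proposition \ref{P:proj_equivP} and Lemma \ref{L:PSframe}, with no genuinely new computation. By Proposition \ref{P:proj_equivP}, $S$ and $\wt S$ are projectively equivalent if and only if there is a smooth map $\phi:\cP_S\to\cP_{\wt S}$ with $\phi^*(\w_{|\cP_{\wt S}})=\w_{|\cP_S}$; this is precisely the ``equivalently'' clause at the end of the statement. So the whole task reduces to proving that, for a smooth $\phi:\cP_S\to\cP_{\wt S}$, the pair of conditions \eqref{E:phi*1} and \eqref{E:phi*2} taken together is equivalent to $\phi^*\wt\w=\w$.

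First I would treat the implication from $\phi^*\wt\w=\w$ to \eqref{E:phi*1} and \eqref{E:phi*2}. Condition \eqref{E:phi*1} is immediate, since the members of $E_\w(\cP_S)$ are among the components of $\w$. For \eqref{E:phi*2}, one recalls that on $\cP_S$ the coefficients $h_{\s\t}$, $h_{\s\t n}$, $h_{\s nn}$, $h_{nnn}$ are \emph{determined} by the structure relations \eqref{E:h2} and \eqref{E:h3} (equivalently the cases $p=2,3$ of Proposition \ref{P:h}) together with \eqref{E:w1s} and \eqref{E:w1n}, by comparing coefficients of the pertinent linearly independent $1$-forms among the $\w^t_0$. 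Since $\phi^*$ commutes with $\td$ and $\wedge$ and carries $\wt\w$ to $\w$, it carries each such relation on $\cP_{\wt S}$ to the corresponding relation on $\cP_S$; comparing coefficients then forces $h=\wt h\circ\phi$ for each of the four families in \eqref{E:phi*2}.

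For the converse I would invoke Lemma \ref{L:PSframe} head on. It states that every component $\w^j_k$ on $\cP_S$ is either a member of the coframe $E_\w(\cP_S)$ or is expressed --- via the structure relations \eqref{E:w}, Corollary \ref{C:P_S}, \eqref{E:w1s} and \eqref{E:w1n} (hence also \eqref{E:Jh} and \eqref{E:w0s}) --- as a linear combination of members of $E_\w(\cP_S)$ whose coefficients are universal constants (the entries of $J$, together with $0$ and $\pm1$) or one of $h_{\s\t}$, $h_{\s\t n}$, $h_{\s nn}$, $h_{nnn}$. The identical relations hold on $\cP_{\wt S}$ with $\w$ and $h$ replaced by $\wt\w$ and $\wt h$. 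Pulling these relations back along $\phi$ and using \eqref{E:phi*1} to match the coframe parts and \eqref{E:phi*2} to match the coefficient functions yields $\phi^*\wt\w^j_k=\w^j_k$ for all the remaining components as well. Thus $\phi^*\wt\w=\w$, and Proposition \ref{P:proj_equivP} then yields the projective equivalence.

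The entire weight of the argument rests on Lemma \ref{L:PSframe}: the substantive point is that second- and third-order data alone --- the $h_{\s\t}$ together with $h_{\s\t n}$, $h_{\s nn}$, $h_{nnn}$ --- already suffice to reconstruct $\w_{|\cP_S}$ from the coframe $E_\w(\cP_S)$, so that no higher-order invariants need appear in \eqref{E:phi*2}. The one place that calls for a little care, which I expect to be the main (mild) obstacle, is checking that the index-$1$ forms $\w^1_\s$, $\w^1_n$ --- and the $\w^0_\s$, $\w^0_{n+1}$ obtained from them via \eqref{E:Jh} and \eqref{E:w0s} --- are genuinely pinned down by \eqref{E:w1s}, \eqref{E:w1n} and Corollary \ref{C:P_S}, and therefore contribute nothing new; this is exactly the index-$1$ redundancy already established in \S\ref{S:h1}.
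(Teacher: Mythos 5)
Your argument is correct and follows the paper's own route: you show that \eqref{E:phi*1} together with \eqref{E:phi*2} is equivalent to $\phi^*(\w_{|\cP_{\wt S}}) = \w_{|\cP_S}$ by means of the structure relations behind Lemma \ref{L:PSframe} (Corollary \ref{C:P_S}, \eqref{E:w}, \eqref{E:Jh}, \eqref{E:w1s}/\eqref{E:w0s}, \eqref{E:w1n}), and then conclude with Proposition \ref{P:proj_equivP}, which is exactly the paper's proof, only written out in more detail.
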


\begin{proof}
By the relations \eqref{E:w^n+1_0}, \eqref{E:h2}, \eqref{E:w0s} and \eqref{E:w1n} the equations \eqref{E:phi*1} and \eqref{E:phi*2} hold if and only if $\phi^*(\w_{|\cP_{\wt S}}) = \w_{|\cP_S}$.  The corollary then follows from  Proposition \ref{P:proj_equivP}.
\end{proof}

In order to establish the $\O$--versions of Lemma \ref{L:PSframe} and Corollary \ref{C:PSframe} (in Section \ref{S:PSframeO}) we must first compute two derivatives in Sections \ref{S:dPS1} and \ref{S:dPS2}.

%------------------------------------------------------------------------------
\subsection{Differentiate \eqref{E:P_S1}} \label{S:dPS1}
%------------------------------------------------------------------------------

Differentiating \eqref{E:P_S1} with \eqref{E:mcC} produces
$$
  0 \ = \ \cP_{\a\b}\wedge \O^\b_0 \ + \ \cP_{\a\barb} \wedge \bar\O^\b_0 
  \ - \ \big(\Omega^0_\a \, + \, P_{\a\b}\,\O^\b_m \, + \, P_{\a\barb}\,\barO^\b_m
  \big) \wedge \O^m_0 \, ,
$$
where
\begin{eqnarray*}
  \cP_{\a\b} & = & \td P_{\a\b} \ + \ P_{\a\b}\,(\O^0_0 + \O^m_m) 
  \ - \ P_{\c\b}\,\O^\c_\a \ - \   P_{\a\c}\,\O^\c_\b  \,,\\
  \cP_{\a\barb} & = & \td P_{\a\barb} \ + \ P_{\a\barb}\,(\barO^0_0 + \O^m_m)
   \ - \ P_{\c\barb}\,\O^\c_\a \ - \ P_{\a\barc}\,\barO^\c_\b\,.
\end{eqnarray*}
Lemma \ref{L:P_S} implies 
\begin{equation} \label{E:cP2id}
  \cP_{\a\b} \ = \ \cP_{\b\a} \quad \hbox{ and } \quad 
  \bar\cP_{\a\barb} \ = \ -\cP_{\b\bara} \, .
\end{equation}
Cartan's Lemma \ref{L:CartanO} and \eqref{E:cP2id} yield functions $P_{\a\b\c} ,\, P_{\a\b\barc},\, P_{\a\b m} ,\, P_{\a\barb\c} ,\, P_{\a\barb\barc} ,\, P_{\a\barb m} ,\, P_{\a mm} : \cP_S \to \bC$ such that 
%\begin{subequations} \label{E:3rd_a}
\begin{eqnarray} 
  \nonumber %\label{E:cPab} 
  \cP_{\a\b} & = & P_{\a\b\c}\,\O^\c_0 \ + \ P_{\a\b\barc}\,\barO^\c_0
  \ + \ P_{\a\b m}\,\O^m_0 \, ,\\
  \nonumber %\label{E:cPabarb}
  \cP_{\a\barb} & = & P_{\a\barb\c}\,\O^\c_0 \ + \ P_{\a\barb\barc}\,\barO^\c_0 
  \ + \ P_{\a\barb m}\,\O^m_0 \,,\\
  \label{E:O0a}
  \Omega^0_\a \ + \ P_{\a\b}\,\O^\b_m \ + \ P_{\a\barb}\,\barO^\b_m & = & 
  -P_{\a\b m}\,\O^\b_0 \ - \ P_{\a\barb m}\,\barO^\b_0 \ - \ P_{\a mm}\,\O^m_0 \, , 
\end{eqnarray}
%\end{subequations}
with 
\begin{equation} \nonumber %\label{E:Pid} 
\renewcommand{\arraystretch}{1.3}
\begin{array}{rcl}
  P_{\a\b\c} \ = \ P_{\b\a\c} \ = \ P_{\a\c\b} \,, & 
  P_{\a\b\barc} \ = \ P_{\b\a\barc} \ = \ P_{\a\barc\b} \,,&
  P_{\a\barb\barc} \ = \ P_{\a\barc\barb} \,,\\
  P_{\a\b m} \ = \ P_{\b\a m}\, , &
  \bar P_{\a\barb\c} \ = \ -P_{\b\bara\barc} \,, &
  \bar P_{\a\barb m} \ = \ -P_{\b\bara m} \, .
\end{array} \end{equation}
Note that \eqref{E:O0a} is the $\Omega$--version of \eqref{E:w0s}, and it is straight forward to check that 
\begin{eqnarray*}
  P_{\a\b m} & = & \half \, \left( h_{2\a,2\b+1,n} \,-\, h_{2\b,2\a+1,n} \right)
  \ - \ \ihalf \, \left( h_{2\a,2\b,n} \,-\, h_{2\a+1,2\b+1,n} \right) \,,\\
  P_{\a\barb m} & = & \half\,\left( h_{2\a,2\b+1,n} \,-\, h_{2\b,2\a+1} \right) 
  \ - \ \ihalf\,\left( h_{2\a,2\b,n} \,+\, h_{2\a+1,2\b+1,n} \right) \,,\\
  P_{\a mm} & = & h_{2\a+1,nn} \ + \ \ti\,h_{2\a,nn} \, .
\end{eqnarray*}

%------------------------------------------------------------------------------
\subsection{Differentiate \eqref{E:P_S2}} \label{S:dPS2}
%------------------------------------------------------------------------------

Differentiating \eqref{E:P_S2} produces
\begin{eqnarray*}
  0 & = & \big( -\O^0_\a \, - \, P_{\a\b}\,\O^\b_m \, - \, P_{\a\barb}\,
                 \bar\O^\b_m \big) 
  \wedge \O^\a_0 \ + \ 
  \big( \bar\O^0_\a \, + \, \bar P_{\a\b}\,\bar\O^\b_m \, + \, 
        \bar P_{\a\barb}\,\O^\b_m \big)
  \wedge \bar\O^\a_0 \\
  & & + \ 
  2\, \big( \bar\O^0_m \,-\,\O^0_m \big) \wedge \O^m_0 \, .
\end{eqnarray*}
Cartan's Lemma \ref{L:CartanO} yields $P_{m^3} : \cP_S \to \bC$ such that 
\begin{equation} \label{E:O0m}
  -2 \, (\O^0_m \,-\,\barO^0_m) \ = \ 
  P_{\a mm} \,\O^\a_0 \ - \ \bar P_{\a mm} \, \bar\O^\a_0 
  \ - \ P_{m^3}\,\O^m_0 \quad \hbox{ with } \quad \bar P_{m^3} \ = \ -P_{m^3} \, . 
\end{equation}
This expression is the $\Omega$--version of \eqref{E:w1n}.  It is straight-forward to check that 
$$
  P_{mmm} \ = \ -2\ti\,h_{nnn}\, .
$$

\begin{remark*}
The coefficient functions $P_{\sfa\sfb\sfc}$ in this section and Section \ref{S:dPS1} are the third-order invariants of $S$ with respect to $\Omega$.  Unlike $P_{\a\b}$ and $P_{\a\barb}$ they do not yield well-defined tensors on $S$.
\end{remark*}

%------------------------------------------------------------------------------
\subsection{A \boldmath $\O$--coframe on $\cP_S$ \unboldmath} \label{S:PSframeO}
%------------------------------------------------------------------------------

In analogy with Lemma \ref{L:PSframe} we have the following.

\begin{lemma} \label{L:PSframeO}
The 1-forms 
$$ 
E_\O(\cP_S) \ := \ \{ \O^0_0 \,, \barO^0_0 \,, \O^\a_0 ,\, \barO^\a_0 \,, \O^m_0 ,\, \O^\a_\b ,\, \barO^\a_\b \,, \O^m_m ,\, \O^\a_m ,\, \barO^\a_m ,\, \O^0_m  \}
$$
form a coframing of $\cP_S$ (over $\bR$) and the remaining components of $\Omega$ are given by \eqref{SE:normP}, \eqref{E:O0a} and \eqref{E:O0m}.
\end{lemma}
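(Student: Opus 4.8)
The plan is to run the argument for Lemma~\ref{L:PSframe} with $\w$ replaced by $\Omega$. The only structural input needed is that the forms $\Omega^a_b$ and $\barO^a_b$ together span $T^*_p\cF\otimes_\bR\bC$ at every $p\in\cF$ --- they are the components of the Maurer--Cartan form of $\tGL(W)\simeq\cF$ together with its conjugate --- so, restricting along the inclusion $\cP_S\hookrightarrow\cF$, they span $T^*_p\cP_S\otimes_\bR\bC$ as well. Since $\dim_\bR\cP_S=2m^2+3$ by Remark~\ref{R:norms}, and $E_\O(\cP_S)$ has exactly $2m^2+3$ members, it suffices to show that every $\Omega^a_b$ and every $\barO^a_b$, restricted to $\cP_S$, is a $C^\infty(\cP_S)$-linear combination of the forms in $E_\O(\cP_S)$: a spanning set of $2m^2+3$ elements in the $(2m^2+3)$-dimensional space $T^*_p\cP_S\otimes_\bR\bC$ is automatically a basis, hence a coframing over $\bR$, and the same computation simultaneously displays the remaining components of $\Omega$ in the asserted form.

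To carry this out I would tabulate which of the $2(m+1)^2$ forms $\Omega^a_b,\barO^a_b$ already belong to $E_\O(\cP_S)$ and which do not. Those that do not are precisely $\Omega^0_\a$ and $\barO^0_\a$, $\Omega^m_\a$ and $\barO^m_\a$, $\barO^0_m$, $\barO^m_0$, and $\barO^m_m$ --- a total of $4m-1$ forms, complementary within $\{\Omega^a_b\}\cup\{\barO^a_b\}$ to the $2m^2+3$ members of $E_\O(\cP_S)$. Each is pinned down on $\cP_S$ by an identity already in hand: $\Omega^0_\a$ by \eqref{E:O0a} and $\barO^0_\a$ by its conjugate; $\Omega^m_\a$ by \eqref{E:P_S1} and $\barO^m_\a$ by its conjugate; $\barO^0_m$ by \eqref{E:O0m}; $\barO^m_0$ by \eqref{E:P_S0}; and $\barO^m_m$ by \eqref{E:P_S2}. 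In every case the right-hand side is visibly a combination of members of $E_\O(\cP_S)$, with coefficients among the second- and third-order invariants $P_{\a\b},P_{\a\barb},P_{\a\b m},P_{\a\barb m},P_{\a mm},P_{m^3}$ and their conjugates. This establishes both the coframing claim and the statement about the remaining components.

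As a cross-check the lemma can also be deduced from its $\w$-version: the constant-coefficient formulas \eqref{E:W=(w)} and \eqref{E:w=(W)} give an invertible passage between $\{\w^j_k\}$ and $\{\tRe\Omega^a_b,\tIm\Omega^a_b\}$, under which the coframing $E_\w(\cP_S)$ of Lemma~\ref{L:PSframe} corresponds to the real and imaginary parts of one representative from each conjugate pair in $E_\O(\cP_S)$, the discrepancy being absorbed by the three reality relations $\Omega^m_0\in\bR$ (from \eqref{E:P_S0}), $\tIm\Omega^m_m=\tIm\Omega^0_0$ (from \eqref{E:P_S2}), and the expression for $\tIm\Omega^0_m$ in \eqref{E:O0m}. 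The only point requiring any care is the bookkeeping of the previous paragraph: one must confirm that $E_\O(\cP_S)$ is exactly complementary to the set of components fixed by \eqref{SE:normP}, \eqref{E:O0a} and \eqref{E:O0m}, with nothing double-counted and nothing omitted, and keep conjugates straight. As in the proof of Lemma~\ref{L:PSframe}, this index-chase is mechanical, and I would leave it to the reader.
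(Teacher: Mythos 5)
Your argument is correct and is essentially the paper's own: the paper disposes of Lemma \ref{L:PSframeO} by the same dimension count ($\dim_\bR\cP_S=2m^2+3$ from Remark \ref{R:norms}, matched against the $2m^2+3$ members of $E_\O(\cP_S)$), with the complementary components of $\Omega$ supplied by \eqref{SE:normP}, \eqref{E:O0a} and \eqref{E:O0m}, exactly as in your bookkeeping. Your tally of the $4m-1$ omitted forms and their defining relations is accurate, so the proposal is a faithful (if more explicit) rendering of the paper's proof.
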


\noindent The lemma follows by dimension count, see Remark \ref{R:norms}.

\begin{remark*}
In the case that $S$ is strongly $\bC$-linearly convex, the $\{ \O^\a_0,\,\O^\a_m\}$ in the coframing may be replaced with the $\{ \O^m_\a ,\, \O^0_\a \}$.  See Section \ref{S:SCLC}.
\end{remark*}

\begin{corollary} \label{C:PSframeO}
Two CR-hypersurfaces $S , \wt S \in \bP W$ are projectively equivalent if and only if there exists a smooth map $\phi : \cP_S \to \cP_{\wt S}$ such that 
$$
  \phi^* E_\O(\cP_{\wt S})  \ =  \ E_\Omega(\cP_S) 
$$
and 
$$ \renewcommand{\arraystretch}{1.3}
\begin{array}{rcl}
   P_{\a\b} & = & \wt{P}_{\a\b} \circ \phi \, ,\quad
   P_{\a\barb} \ = \ \wt{P}_{\a\barb} \circ \phi \,, \\
   P_{\a\b m} & = & \wt{P}_{\a\b m} \circ \phi \,, \quad
   P_{\a\barb m} \  = \ \wt{P}_{\a\barb m} \circ \phi \,,\\
   P_{\a mm} & = & \wt{P}_{\a mm} \circ \phi \,,\quad
   P_{mmm} \ = \ \wt{P}_{mmm} \circ \phi \, .
\end{array}$$
Equivalently, $\phi^*(\Omega_{|\cP_{\wt S}}) = \Omega_{|\cP_S}$.
\end{corollary}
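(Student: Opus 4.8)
The plan is to mimic the proof of Corollary \ref{C:PSframe} verbatim, replacing the $\w$--coframe of Lemma \ref{L:PSframe} by the $\Omega$--coframe of Lemma \ref{L:PSframeO}. The crux is the equivalence: a bundle map $\phi:\cP_S\to\cP_{\wt S}$ satisfies $\phi^*(\Omega_{|\cP_{\wt S}})=\Omega_{|\cP_S}$ if and only if it matches the coframing $E_\Omega(\cP_S)$ \emph{and} the six families of invariant functions $P_{\a\b},\,P_{\a\barb},\,P_{\a\b m},\,P_{\a\barb m},\,P_{\a mm},\,P_{mmm}$. Once this is established, Proposition \ref{P:proj_equivP} closes the argument, since it asserts that $S$ and $\wt S$ are projectively equivalent exactly when a $\phi$ with $\phi^*\Omega=\Omega$ exists.

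For the ``only if'' direction I would assume $\phi^*(\Omega_{|\cP_{\wt S}})=\Omega_{|\cP_S}$. Pulling back only the components that appear in the list $E_\Omega(\cP_S)$ gives $\phi^*E_\Omega(\cP_{\wt S})=E_\Omega(\cP_S)$ at once. For the invariants, note that $\phi$ is a real smooth map, so $\phi^*$ commutes with complex conjugation of forms. Pulling back \eqref{E:P_S1} for $\wt S$ and using $\phi^*\Omega^m_\a=\Omega^m_\a$ together with $\phi^*\Omega^\b_0=\Omega^\b_0$ and $\phi^*\barO^\b_0=\barO^\b_0$, the linear independence of the semi-basic forms $\{\O^\b_0,\barO^\b_0,\O^m_0\}$ on $\cP_S\subset\cF_S$ (Remark \ref{R:sb}) forces $\wt P_{\a\b}\circ\phi=P_{\a\b}$ and $\wt P_{\a\barb}\circ\phi=P_{\a\barb}$. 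Applying the identical argument to \eqref{E:O0a} and \eqref{E:O0m} yields the matching of $P_{\a\b m},\,P_{\a\barb m},\,P_{\a mm}$ and $P_{mmm}$, so \eqref{E:phi*1}'s $\Omega$--analog and the displayed coefficient equalities all hold.

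For the ``if'' direction I would assume $\phi^*E_\Omega(\cP_{\wt S})=E_\Omega(\cP_S)$ and that $\phi$ matches all six families of invariants. By Lemma \ref{L:PSframeO} every component of $\Omega$ not already in $E_\Omega(\cP_S)$ is, on $\cP_S$, a $\cP_S$--linear combination of the forms of $E_\Omega(\cP_S)$ and their conjugates, whose coefficients are either universal constants (e.g.\ the factors of $\ti$ in \eqref{E:P_S0}--\eqref{E:P_S2}) or precisely the invariant functions $P_{\a\b},\,P_{\a\barb},\,P_{\a\b m},\,P_{\a\barb m},\,P_{\a mm},\,P_{mmm}$ appearing in \eqref{SE:normP}, \eqref{E:O0a}, \eqref{E:O0m}. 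Pulling back these structure relations and invoking both hypotheses shows that each remaining component of $\Omega_{|\cP_{\wt S}}$ pulls back to its counterpart in $\Omega_{|\cP_S}$; hence $\phi^*(\Omega_{|\cP_{\wt S}})=\Omega_{|\cP_S}$. Proposition \ref{P:proj_equivP} then gives projective equivalence of $S$ and $\wt S$, and conversely.

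The only genuine obstacle is bookkeeping rather than conceptual: one must confirm that the list of ``remaining components'' furnished by Lemma \ref{L:PSframeO} introduces \emph{no} invariant functions beyond the six displayed families, and, in the first direction, that the needed independence of the semi-basic forms is legitimately available on $\cP_S$ (it is, since $\cP_S\subset\cF_S$, by Remark \ref{R:sb}). Given Lemmas \ref{L:P_S}, \ref{L:PSframeO} and the derivations in Sections \ref{S:dPS1}--\ref{S:dPS2}, these checks are routine, and the rest is a transcription of the $\w$--argument of Corollary \ref{C:PSframe}.
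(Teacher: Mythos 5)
Your proposal is correct and follows the same route the paper intends: the paper's proof of Corollary \ref{C:PSframeO} is declared ``identical to that of Corollary \ref{C:PSframe},'' i.e.\ one shows via the structure relations \eqref{SE:normP}, \eqref{E:O0a}, \eqref{E:O0m} that matching the coframe $E_\Omega$ together with the six families of invariants is equivalent to $\phi^*(\Omega_{|\cP_{\wt S}})=\Omega_{|\cP_S}$, and then invokes Proposition \ref{P:proj_equivP}. Your bookkeeping (linear independence of the semi-basic forms for the forward direction, and the fact that the remaining components of $\Omega$ involve no invariants beyond the six displayed families for the converse) is exactly the content the paper leaves to the reader.
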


\noindent The proof is identical to that of Corollary \ref{C:PSframe}, and is left to the reader.

%------------------------------------------------------------------------------
\subsection{The case that \boldmath $S$ \unboldmath is SCLC} \label{S:SCLC}
%------------------------------------------------------------------------------

In the case that $S$ is strongly $\bC$--linearly convex (Definition \ref{D:SCLC}) equations \eqref{E:P_S1}, \eqref{E:O0a} and \eqref{E:O0m} have alternate formulations.  Let $Q = (Q^{\a\b})$ and $M = (Q^{\a\barb})$ be given by \eqref{E:SCLC_inv}.  Then 
\begin{subequations} \label{SE:SCLC_alt}
\begin{eqnarray}
  \label{E:Oa0}
  \O^\a_0 & = & Q^{\a\b}\,\O^m_\b \ - \ Q^{\a\barb}\,\barO^m_\b \,,\\
  \label{E:Oam}
  \O^\a_m \ + \ Q^{\a\b}\,\O^0_\b \ - \ Q^{\a\barb}\,\barO^0_\b & = & 
  -Q^{\a\b m}\,\O^m_\b \ + \ Q^{\a\barb m}\,\barO^m_\b 
  \ - \ Q^{\a mm}\,\O^m_0 \,, \\
  \label{E:O0m_alt}
    -2 \, (\O^0_m \,-\,\barO^0_m) & = &
  Q^{\a mm} \,\O^m_\a \ - \ \bar Q^{\a mm} \, \bar\O^m_\a 
  \ - \ P_{m^3}\,\O^m_0 \,,
\end{eqnarray}
\end{subequations}
where 
\begin{eqnarray*}
  Q^{\a\b m} & = & Q^{\a\c}\,P_{\c\e m}\,Q^{\e\b} \ + \ 
  Q^{\a\c}\,P_{\c\bare m}\,Q^{\b\bare} \ - \ 
  Q^{\a\barc}\,\bar P_{\c\e m}\,Q^{\b\bare} \ + \ 
  Q^{\a\barc}\,P_{\e\barc m} \, Q^{\e\b} \,,\\
  Q^{\a\barb m} & = & Q^{\a\c}\,P_{\c\e m}\,Q^{\e\barb} \ - \ 
  Q^{\a\c}\,P_{\c\bare m}\,\bar Q^{\e\b} \ + \ 
  Q^{\a\barc}\,\bar P_{\c\e m}\,\bar Q^{\e\b} \ + \ 
  Q^{\a\barc}\,P_{\e\barc m} \, Q^{\e\barb} \,,\\
  Q^{\a mm} & = & Q^{\a\c}\,P_{\c mm} \ - \ Q^{\a\barc}\, \bar P_{\c mm} \, .
\end{eqnarray*}
As we will see in Section \ref{S:SD}, the coefficients $Q$ above may be identified with the $P$ coefficients of the dual hypersurface.

%------------------------------------------------------------------------------
\section{Dual hypersurfaces} \label{S:SD}
%------------------------------------------------------------------------------

In this section we define the Gauss map of a CR-hypersurface $S \subset \bP W$ and characterize the self-dual strongly $\bC$--linearly convex (SCLC) hypersurfaces (Theorem \ref{T:SD}).

%------------------------------------------------------------------------------
\subsection{Gauss map} \label{S:gauss}
%------------------------------------------------------------------------------
Let $\cF_S$ be the adapted frame bundle over a SCLC hypersurface $S$.  Given $f = (f_0,\ldots,f_m) \in \cF_S$, let $z = [f_0] \in S$, and let $\wh H_z S= \wh T_zS \,\cap \, \sfJ(\wh T_zS)$ be the maximal complex subspace of $\wh T_zS \subset W$.  Note that $\wh H_z S= \tspan\{ f_0 , \ldots , f_{m-1} \}$ (see \S\ref{S:F_S}).  Let $(f^0 , \ldots , f^m) \in W^*$ be the basis dual to $f$.  Note that $f^m$ vanishes when restricted to $\wh H$.  Since $\wh H$ depends only on $z \in S$, this implies that, modulo rescaling, $f^m$ depends only on $z \in S$.  In particular, the map $\cF_S \to \bP W^*$ sending $f \mapsto [f^m] \in \bP W^*$ descends to $S$ where it defines the \emph{Gauss map}
$$
  \gamma : S \ \to \ \bP W^* \, .
$$
Under the identification of $\bP W^*$ with the Grassmannian $\tGr(m,m+1)$ of $m$-dimensional $\bC$--planes in $W$,  $\gamma$ is the map sending $z \in S$ to $\wh H_zS \in \tGr(m,m+1)$.

\begin{definition*}
The image $S^* = \gamma(S)$ is the \emph{dual of $S$.}   
\end{definition*}

Let $\z = (\z_0,\ldots,\z_m) \in \tGL(W^*)$ denote a basis of $W^*$.  The Maurer-Cartan form $\Lambda$ on $\tGL(W^*)$ is defined by $\td \z_a = \Lambda_a^b\,\z_b$.  Define a map $\Gamma : \tGL(W) \to \tGL(W^*)$ by $\Gamma(f) = (f^m,f^\a,f^0)$.  Note that $\Gamma^2 = \tId$.  From \eqref{E:df} we see that 
\begin{equation} \label{E:df^m}
  \td f^a \ = \ -\Omega^a_b \, f^b \, .
\end{equation}   
Thus,
\begin{equation} \label{E:GammaPB}
  \Gamma^* \left( \begin{array}{ccc}
    \Lambda^0_0 & \Lambda^0_\b & \Lambda^0_m \\
    \Lambda^\a_0 & \Lambda^\a_\b & \Lambda^\a_m \\
    \Lambda^m_0 & \Lambda^m_\b & \Lambda^m_m
  \end{array} \right) \ = \ 
  - \left( \begin{array}{ccc}
  \O^m_m & \O^\b_m & \O^0_m \\
  \O^m_\a & \O^\b_\a & \O^0_\a \\
  \O^m_0 & \O^\b_0 & \O^0_0
  \end{array} \right) \, .
\end{equation}
The following lemma is well-known; see \cite[\S2.5]{MR2060426}.  We give a proof as a warm-up to Theorem \ref{T:SD}.

\begin{lemma} \label{L:*}
If $S \subset \bP W$ is a strongly $\bC$--linearly convex hypersurface, then $S^* \subset \bP W^*$ is also a strongly $\bC$--linearly convex  hypersurface and $\Gamma(\cP_S) = \cP_{S^*}$.
\end{lemma}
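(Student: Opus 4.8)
The plan is to exploit the involution $\Gamma^2=\tId$ together with the pull--back identity \eqref{E:GammaPB}, so that essentially only the single inclusion $\Gamma(\cP_S)\subseteq\cP_{S^*}$ (together with the assertion that $S^*$ is again SCLC) has to be proved; the reverse inclusion and biduality $(S^*)^*=S$ then follow by running the same argument with $S^*$ in place of $S$. Below I identify a $1$--form on $\Gamma(\cP_S)$ with its $\Gamma$--preimage via \eqref{E:GammaPB} without further comment.

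\emph{Step 1: $\Gamma$ maps $\cF_S$ into $\cF_{S^*}$.} Given $f=(f_0,\dots,f_m)\in\cF_S$ with $z=[f_0]$, the frame $\Gamma(f)=(f^m,f^1,\dots,f^{m-1},f^0)$ of $W^*$ has $[f^m]=\gamma(z)\in S^*=\gamma(S)$ by the construction in \S\ref{S:gauss}. Adaptedness of $\Gamma(f)$ is the statement that $\tspan_\bC\{f^m,f^1,\dots,f^{m-1}\}=\tAnn(L_z)$ is the tangent hyperplane $\wh H_{[f^m]}S^*$. To see this I differentiate $f^m$ by \eqref{E:df^m}, obtaining $\td f^m=-\Omega^m_0\,f^0-\Omega^m_\a\,f^\a-\Omega^m_m\,f^m$, and substitute \eqref{E:1b}; on $\cF_S$ the form $\Omega^m_0$ is real--valued and semi--basic by \eqref{E:w^n+1_0}, and strong $\bC$--linear convexity of $S$ is exactly the invertibility of $\left(\begin{array}{cc}P&L\\L^t&-\bar P\end{array}\right)$, so the forms $\Omega^m_0,\Omega^m_\a,\bar\Omega^m_\a$ are pointwise independent on $\cF_S$. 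Hence $\{\td f^m(\xi)\bmod\bC f^m\}$ spans $\tAnn(L_z)\oplus\bR f^0$, a real hyperplane of $W^*$ whose maximal complex subspace is $\tAnn(L_z)=\tspan_\bC\{f^m,f^1,\dots,f^{m-1}\}$. This simultaneously shows that $S^*$ is a strongly pseudoconvex hypersurface and that $\Gamma(f)\in\cF_{S^*}$ (equivalently: $\Gamma^*(\Lambda^m_0-\bar\Lambda^m_0)=-(\Omega^m_0-\bar\Omega^m_0)=0$, the defining relation for $\cF_{S^*}$).

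\emph{Step 2: $\Gamma(\cP_S)\subseteq\cP_{S^*}$, and $S^*$ is SCLC.} By Lemma \ref{L:P_S} the sub--bundle $\cP_S$ is cut out of $\cF_S$ by $P_{\a m}=P_{mm}=0$, i.e.\ by the normalizations \eqref{SE:normP}. Pulling these back with \eqref{E:GammaPB}: \eqref{E:P_S2} becomes $\Lambda^0_0+\bar\Lambda^m_m-\bar\Lambda^0_0-\Lambda^m_m=0$, the $S^*$--analogue of \eqref{E:P_S2}; and, invoking the SCLC reformulation \eqref{E:Oa0} valid on $\cP_S$, one gets on $\Gamma(\cP_S)$ the identity
\[
  \Lambda^m_\b \ = \ -\,\Omega^\b_0 \ = \
  -\,Q^{\b\c}\Omega^m_\c + Q^{\b\barc}\bar\Omega^m_\c \ = \
  Q^{\b\c}\Lambda^\c_0 - Q^{\b\barc}\bar\Lambda^\c_0 \,,
\]
which carries no $\Lambda^m_0$--term. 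Thus the Maurer--Cartan form on $\Gamma(\cP_S)$ obeys the relations \eqref{SE:normP} that define $\cP_{S^*}$, so $\Gamma(\cP_S)\subseteq\cP_{S^*}$; moreover, reading off coefficients, the second fundamental form of $S^*$ is $\wt P_{\a\b}=Q^{\a\b}$, $\wt P_{\a\barb}=-Q^{\a\barb}$, with $Q,M$ as in \eqref{E:SCLC_inv}. Then the SCLC matrix of $S^*$ is $\tdiag(I,-I)$--conjugate to $\left(\begin{array}{cc}Q&M\\M^t&-\bar Q\end{array}\right)=\left(\begin{array}{cc}P&L\\L^t&-\bar P\end{array}\right)^{-1}$, hence invertible, and a short linear--algebra computation (using the convexity inequality \eqref{E:SCLC3} for $S$) shows that the Levi form $\wt P_{\a\barb}$ of $S^*$ is definite; so $S^*$ is SCLC.

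\emph{Step 3 and conclusion.} Applying Steps 1--2 with $S^*$ in place of $S$ (legitimate since $S^*$ is SCLC) gives $\Gamma(\cP_{S^*})\subseteq\cP_{(S^*)^*}$. On the other hand $\Gamma^2=\tId$ gives $\cP_S=\Gamma(\Gamma(\cP_S))\subseteq\Gamma(\cP_{S^*})\subseteq\cP_{(S^*)^*}$, and projecting to $\bP W$ forces $S\subseteq(S^*)^*$, whence $(S^*)^*=S$ by equality of dimensions. Therefore $\Gamma(\cP_{S^*})\subseteq\cP_S$, so $\cP_{S^*}=\Gamma(\Gamma(\cP_{S^*}))\subseteq\Gamma(\cP_S)$; combined with Step 2 this gives $\Gamma(\cP_S)=\cP_{S^*}$. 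I expect the main obstacle to be Step 1 --- extracting the biduality content (that $S^*$ is a hypersurface with maximal complex tangent $\tAnn(L_z)$) directly from the structure equations rather than from projective duality, while keeping track of which components of $\Omega$ are semi--basic and which are real. Steps 2--3 are essentially bookkeeping with \eqref{E:GammaPB} and \eqref{E:SCLC_inv}, though some care with signs and conjugations is needed in the definiteness check for $\wt P_{\a\barb}$.
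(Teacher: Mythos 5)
Your Steps 1 and 2 (the inclusion part) track the paper's own argument closely: differentiating $f^m$ via \eqref{E:df^m}, using strong $\bC$--linear convexity to get pointwise independence of $\O^m_0,\O^m_\a,\barO^m_\a$ so that $S^*$ is a hypersurface and $\Gamma(\cF_S)\subseteq\cF_{S^*}$, then pulling the normalizations \eqref{SE:normP} through \eqref{E:GammaPB} with the SCLC inversion \eqref{E:Oa0} to get the dual relations and the identifications $P^*_{\a\b}=Q^{\a\b}$, $P^*_{\a\barb}=-Q^{\a\barb}$. Your Step 3 is a sensible supplement: the paper passes from \eqref{E:Lneg} to the equality $\Gamma(\cP_S)=\cP_{S^*}$ without comment, and your biduality argument via $\Gamma^2=\tId$ is the right way to justify the reverse inclusion --- although the line ``$S\subseteq(S^*)^*$, whence $(S^*)^*=S$ by equality of dimensions'' only gives openness; what you actually need (and what your Step 1 computation applied to a frame over $z$ already gives, since $\wh H_{\gamma(z)}S^*=\tAnn(L_z)$ has annihilator $L_z$) is the pointwise identity $\gamma^*(\gamma(z))=z$.

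The genuine gap is the claim that $S^*$ is SCLC, which is half of the lemma. SCLC is the nonvanishing of the form \eqref{E:SCLC2} only on vectors of the special conjugate-pair shape $(\sfz,\bar\sfz)$, so invertibility of the matrix $\bigl(\begin{smallmatrix} Q & -M \\ \bar M & -\bar Q\end{smallmatrix}\bigr)$ is necessary but nowhere near sufficient; and definiteness of the Levi form of $S^*$ is exactly strong pseudoconvexity, which by \eqref{E:SCLC3} is strictly weaker than SCLC. So the asserted chain ``a short computation shows $\wt P_{\a\barb}$ is definite; so $S^*$ is SCLC'' is invalid even if the unspecified computation were supplied: you must verify the full inequality \eqref{E:SCLC3} (equivalently the nonvanishing \eqref{E:SCLC*}) for the dual coefficients. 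The paper does this with a substitution trick: it replaces $(\sfw;\bar\sfw)$ by the image of $(\sfz;\bar\sfz)$ under an invertible matrix built from $P$ and $L$, which converts the dual convexity form into (a multiple of) $\tIm P(\sfz,\sfz)-\ti L(\sfz,\bar\sfz)$, the convexity form of $S$ itself, so that SCLC of $S$ transfers directly to $S^*$. Some argument of this type is needed and is missing from your proposal; moreover, since your Step 3 invokes ``legitimate since $S^*$ is SCLC'' to run Steps 1--2 on the dual, the equality $\Gamma(\cP_S)=\cP_{S^*}$ in your write-up is also contingent on this missing step.
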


\begin{proof}
From \eqref{E:df^m} we have 
\begin{equation} \label{E:d_gamma}
  \td f^m \ = \ -\Omega^m_0 \,f^0 - \Omega^m_\b \, f^\b - \Omega^m_m \, f^m \, .
\end{equation}
It is a consequence of strong $\bC$--linear convexity that $\{ \O^m_0 , \O^m_\a , \barO^m_\a , \O^m_m , \barO^m_m\}$ are linearly independent over $\bR$ on $\cF_S$, see \eqref{E:Oa0}.  Thus, $\wh T_{\c(z)} S^* = \td \gamma (T_f \cF_S)$ is of real dimension $n+1 = 2m+1$.  In particular, $S^*$ is a hypersurface in $\bP W^*$. 

From \eqref{E:df^m} we see that $\Gamma$ maps $\cF_S$ to $\cF_{S^*}$.  If we restrict $\Gamma$ to $\cP_S$, then \eqref{E:GammaPB} and Lemma \ref{L:P_S} yield
$$\renewcommand{\arraystretch}{1.3}
\begin{array}{rcl}
  \Gamma^*(\bar \Lambda^m_0) & = & \Gamma^*(\Lambda^m_0) \,,\\
  \Gamma^*(\Lambda^m_\a) & = & -\O^\a_0 
  \ = \ -Q^{\a\b}\,\O^m_\b \ + \ Q^{\a\barb}\,\barO^m_\b 
  \ = \ Q^{\a\b}\,\Gamma^*(\Lambda^\b_0) 
  \ - \ Q^{\a\barb}\,\Gamma^*(\bar\Lambda^\b_0)\,, \\
  \Gamma^*( \bar\Lambda^0_0 \,+\, \Lambda^m_m ) & = & 
  \Gamma^*(\Lambda^0_0 \,+\, \bar\Lambda^m_m ) \, .
\end{array}$$
The coefficients $Q = (Q^{\a\b})$ and $M = (Q^{\a\barb})$ above are defined by \eqref{E:SCLC_inv}.  Since $\Gamma : \tGL(W) \to \tGL(W^*)$ is a diffeomorphism, we have 
\begin{equation} \label{E:Lneg} \renewcommand{\arraystretch}{1.3}
\begin{array}{rcl}
  \bar \Lambda^m_0 & = & \Lambda^m_0 \,,\\
  \Lambda^m_\a & = & Q^{\a\b}\,\Lambda^\b_0 
  \ - \ Q^{\a\barb}\,\bar\Lambda^\b_0\,, \\
  \bar\Lambda^0_0 \,+\, \Lambda^m_m & = & 
  \Lambda^0_0 \,+\, \bar\Lambda^m_m \, .
\end{array} \end{equation}
This implies that $\Gamma(\cP_S) = \cP_{S^*}$, and 
\begin{equation} \label{E:II*}
   Q^{\a\b} \ = \ \Gamma^*(P_{\a\b}^*) \quad\hbox{ and } \quad 
   -Q^{\a\barb} \ = \ \Gamma^*(P_{\a\barb}^*)\,,
\end{equation} 
where $P_{\a\b}^*$ and $P_{\a\barb}^*$ are the coefficients of the second fundamental form on $S^*$.

To see that $S^*$ is strongly $\bC$--linearly convex it suffices, by \eqref{E:II*} and \eqref{E:SCLC2}, to show that 
\begin{equation} \label{E:SCLC*}
   0 \ \not= \ \ti\,\left( \begin{array}{cc} \sfw^t & \bar\sfw^t \end{array} \right) 
  \left( \begin{array}{cc} Q & -M \\ \bar M & -\bar Q \end{array} \right) 
  \left( \begin{array}{c} \sfw \\ \bar\sfw \end{array} \right) \,,
\end{equation}
for all $0 \not= \sfw \in \bC^{m-1}$.  Define $0 \not=\sfz\in\bC^{m-1}$ by
$$
  \left( \begin{array}{c} \sfw \\ \bar\sfw \end{array} \right) \ = \ 
  \left( \begin{array}{cc} P & -L \\ \bar L & -\bar P \end{array} \right) 
  \left( \begin{array}{c} \sfz \\ \bar\sfz \end{array} \right) \, .
$$
Then the right-hand side of \eqref{E:SCLC*} is equal to
$$
  \ti\,\left( \begin{array}{cc} \sfz^t & \bar\sfz^t \end{array} \right) 
  \left( \begin{array}{cc} P & -L \\ \bar L & -\bar P \end{array} \right) 
  \left( \begin{array}{c} \sfz \\ \bar\sfz \end{array} \right) \ = \ 
  2\,\tIm \, P(\sfz,\sfz) \ - \ 2\,\ti L(\sfz,\bar\sfz) \, .
$$
Since $S$ is SCLC this quantity is nonzero for all $0 \not= \sfz \in \bC^{m-1}$; see \eqref{E:SCLC3}.  We conclude that \eqref{E:SCLC*} holds and $S^*$ is SCLC.
\end{proof}

%------------------------------------------------------------------------------
\subsection{Self-dual hypersurfaces} \label{S:selfdual}
%------------------------------------------------------------------------------

\begin{definition*}
The SCLC hypersurface $S$ is \emph{self-dual} if there exists an (complex) linear isomorphism $A : W^* \to W$ such that $A(S^*) = S$.  
\end{definition*}

\begin{theorem} \label{T:SD}
A strongly $\bC$--linearly convex CR-hypersurface $S \subset \bP W$ is self-dual if and only if there exists a smooth map $\phi : \cP_S \to \cP_S$ such that 
\begin{equation} \label{E:SDphi*}
  \phi^* \left( \begin{array}{ccc}
    \O^0_0 & \O^0_\b & \O^0_m \\
    \O^\a_0 & \O^\a_\b & \O^\a_m \\
    \O^m_0 & \O^m_\b & \O^m_m
  \end{array} \right)\ = \ 
  - \left( \begin{array}{ccc}
  \O^m_m & \O^\b_m & \O^0_m \\
  \O^m_\a & \O^\b_\a & \O^0_\a \\
  \O^m_0 & \O^\b_0 & \O^0_0
  \end{array} \right) \, .
\end{equation}
In particular, the map $\phi$ must satisfy
\begin{subequations} \label{SE:Q=P}
\begin{eqnarray}
   \label{E:Q=Pa}
   Q^{\a\b} & = & P_{\a\b} \circ \phi \, ,\quad
   -Q^{\a\barb} \ = \ P_{\a\barb} \circ \phi \,, \\
   \label{E:Q=Pb}
   Q^{\a\b m} & = & P_{\a\b m} \circ \phi \,, \quad
   -Q^{\a\barb m} \  = \ P_{\a\barb m} \circ \phi \,,\\
   \label{E:Q=Pc}
   Q^{\a mm} & = & P_{\a mm} \circ \phi \,,\quad
   P_{mmm} \ = \ P_{mmm} \circ \phi \, .
\end{eqnarray} \end{subequations}
\end{theorem}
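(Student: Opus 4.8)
The plan is to reduce self-duality of $S$ to an ordinary projective equivalence inside $\bP W$ and then quote Proposition \ref{P:proj_equivP} together with the Gauss-map computation \eqref{E:GammaPB} of Lemma \ref{L:*}. First I would fix once and for all a complex linear isomorphism $A_0 : W^* \to W$ and the induced map $L_{A_0} : \tGL(W^*) \to \tGL(W)$, $(\z_0,\ldots,\z_m)\mapsto(A_0\z_0,\ldots,A_0\z_m)$. Exactly as for the left-invariance \eqref{E:LinvarO}, $L_{A_0}$ pulls the Maurer--Cartan form $\Omega$ of $\tGL(W)$ back to that, $\Lambda$, of $\tGL(W^*)$ (after the evident identification of $\fgl(W)$- with $\fgl(W^*)$-valued forms), and, just as in Lemma \ref{L:P_Spreserved}, it carries $\cP_{S^*}$ onto $\cP_{A_0S^*}$, where $A_0S^*\subset\bP W$ is the image of $S^*\subset\bP W^*$ (which is SCLC by Lemma \ref{L:*}) under the projective isomorphism induced by $A_0$. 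Since every complex isomorphism $A:W^*\to W$ is $g\circ A_0$ for a unique $g\in\tGL(W)$ and $A(S^*)=g(A_0S^*)$, the hypersurface $S$ is self-dual if and only if $S$ and $A_0S^*$ are projectively equivalent hypersurfaces of $\bP W$; by Proposition \ref{P:proj_equivP} this is in turn equivalent to the existence of a smooth map $\psi:\cP_{A_0S^*}\to\cP_S$ with $\psi^*(\Omega_{|\cP_S})=\Omega_{|\cP_{A_0S^*}}$.

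Next I would introduce the canonical diffeomorphism $\Phi_0:=L_{A_0}\circ\Gamma:\cP_S\to\cP_{A_0S^*}$, where $\Gamma:\tGL(W)\to\tGL(W^*)$, $f\mapsto(f^m,f^\a,f^0)$, restricts to $\cP_S\to\cP_{S^*}$ by Lemma \ref{L:*}. By \eqref{E:GammaPB},
$$
  \Phi_0^*\bigl(\Omega_{|\cP_{A_0S^*}}\bigr)\ =\ \Gamma^*\bigl(L_{A_0}^*\Omega\bigr)\ =\ \Gamma^*\Lambda
$$
is exactly the matrix-valued $1$-form on the right-hand side of \eqref{E:SDphi*}. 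Hence a map $\psi$ as above yields $\phi:=\psi\circ\Phi_0:\cP_S\to\cP_S$ with $\phi^*(\Omega_{|\cP_S})=\Phi_0^*\psi^*(\Omega_{|\cP_S})=\Phi_0^*(\Omega_{|\cP_{A_0S^*}})$, i.e. \eqref{E:SDphi*}; conversely, a map $\phi:\cP_S\to\cP_S$ satisfying \eqref{E:SDphi*} yields $\psi:=\phi\circ\Phi_0^{-1}:\cP_{A_0S^*}\to\cP_S$ with $\psi^*(\Omega_{|\cP_S})=(\Phi_0^{-1})^*\Phi_0^*(\Omega_{|\cP_{A_0S^*}})=\Omega_{|\cP_{A_0S^*}}$. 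Together with the first paragraph this proves the asserted equivalence; connectedness of $S$, hence of $\cP_S$, enters only through Proposition \ref{P:proj_equivP}.

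For the relations \eqref{SE:Q=P} I would, given $\phi$ with \eqref{E:SDphi*}, pull back in turn the defining relations \eqref{E:P_S1}, \eqref{E:O0a}, \eqref{E:O0m} of the $P$-coefficients on $\cP_S$, use \eqref{E:SDphi*} to rewrite each pulled-back component of $\Omega$ and its conjugate, and compare the resulting identities with the SCLC reformulations \eqref{E:Oa0}, \eqref{E:Oam}, \eqref{E:O0m_alt} that define the $Q$-coefficients on $\cP_S$. Since the $1$-forms $\O^m_0,\O^m_\a,\barO^m_\a,\O^0_\a,\barO^0_\a$ occurring there belong to the SCLC coframing of $\cP_S$ (the variant of Lemma \ref{L:PSframeO}), matching coefficients gives first \eqref{E:Q=Pa}, then \eqref{E:Q=Pb} together with the first identity of \eqref{E:Q=Pc}, and finally $P_{mmm}=P_{mmm}\circ\phi$.

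The main obstacle is bookkeeping rather than depth. One must keep straight the $0\leftrightarrow m$ transposition built into $\Gamma$ — hence into \eqref{E:GammaPB} and the right-hand side of \eqref{E:SDphi*} — as against the index-preserving identification effected by $A_0$, so that the compositions $L_{A_0}\circ\Gamma$ and $\psi\circ\Phi_0$ really land in $\cP_S$ with precisely the stated pullback; and then one must carry out the routine but sign-sensitive coefficient comparison in the last paragraph without error. Everything else is an immediate consequence of Proposition \ref{P:proj_equivP} and Lemma \ref{L:*}.
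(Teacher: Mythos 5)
Your proposal is correct and takes essentially the same route as the paper: both arguments rest on the Gauss-map identity \eqref{E:GammaPB}, Lemma \ref{L:*}, and Maurer--Cartan rigidity (which you access through Proposition \ref{P:proj_equivP} after transporting $S^*$ into $\bP W$ by a fixed reference isomorphism $A_0$, while the paper invokes Theorem \ref{T:Cartan} directly to conclude $\phi = \cL_A\circ\Gamma$), together with the structure equations \eqref{E:O0a}, \eqref{E:O0m}, \eqref{E:Oam}, \eqref{E:O0m_alt}. Your derivation of \eqref{SE:Q=P} by pulling the defining relations on $\cP_S$ back by $\phi$ and matching coefficients against the SCLC coframing is the same computation the paper carries out via $\Gamma^*$ of the $S^*$-versions (equations \eqref{E:II*}, \eqref{E:P3*1}, \eqref{E:P3*2}), so the differences are organizational rather than substantive.
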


\begin{proof}
Suppose that $S$ is self dual.  Then there exists a linear isomorphism $A : W^* \to W$ such that $A(S^*) = S$.  Given $\z = (\z_0,\z_\a,\z_m) \in \tGL(W^*)$ define $f = A(\z) = (f_0,f_\a,f_m) \in \tGL(W)$ by $f_a := A(\z_a)$.  This defines an induced map $\mathcal{L}_A : \tGL(W^*) \to \tGL(W)$.  We have 
\begin{equation} \label{E:A*O=L}
  \mathcal{L}_A^*(\Omega_f) = \Lambda_\z \,.  
\end{equation}
This, together with \eqref{E:GammaPB}, implies that $\phi = \cL_A \circ \Gamma$ satisfies \eqref{E:SDphi*}.

Conversely suppose that there exists a smooth map $\phi : \cP_S \to \cP_S$ satisfying \eqref{E:SDphi*}.  Note that the right-hand side of \eqref{E:SDphi*} is a $\fgl(W)$--valued 1-form satisfying the Maurer-Cartan equation.  It now follows from Theorem \ref{T:Cartan} that $\phi = \cL_A \circ \Gamma$ for some linear isomorphism $A : W^* \to W$.

It remains to establish \eqref{SE:Q=P}. The first line \eqref{E:Q=Pa} is a consequence of \eqref{E:II*}.  To establish \eqref{E:Q=Pb} consider the $S^*$ version of \eqref{E:O0a}.  We have 
$$\renewcommand{\arraystretch}{1.5}
\begin{array}{rcl}
\multicolumn{3}{l}{
  \Gamma^*(P^*_{\a\b m})\,\Omega^m_\b \ + \  
  \Gamma^*(P^*_{\a\barb m})\,\bar\O^m_\b
  \ + \ \Gamma^*(P^*_{\a mm})\,\O^m_0 \hbox{\hspace{100pt}} 
  } \\
  \hbox{\hspace{80pt}} & \stackrel{\eqref{E:GammaPB}}{=} & 
    -\Gamma^*(P^*_{\a\b m}\Lambda^\b_0 \,+\, P^*_{\a\barb m}\bar\Lambda^\b_0
  \,+\, P^*_{\a mm}\Lambda^m_0 ) \\
  & \stackrel{\eqref{E:O0a}}{=} &
  \Gamma^*( \Lambda^0_\a \,+\, P^*_{\a\b}\,\Lambda^\b_m \,+\, P^*_{\a\barb}\,
  \bar\Lambda^\b_m ) \\
  & \stackrel{\eqref{E:GammaPB},\eqref{E:II*}}{=} &
  -\O^\a_m \ - \ Q^{\a\b}\,\O^0_\b \ + \ Q^{\a\barb}\,\barO^0_\b \\
  & \stackrel{\eqref{E:Oam}}{=} & 
  Q^{\a\b m}\,\O^m_\b \ + \ Q^{\a\barb m}\,\barO^m_\b \ + \ 
  Q^{\a mm}\,\O^m_0 \, .
\end{array} $$
Thus 
\begin{equation} \label{E:P3*1}
  \Gamma^*(P^*_{\a\b m}) \ = \ Q^{\a\b m} \,, \quad
  \Gamma^*(P^*_{\a\barb m}) \ = \ -Q^{\a\barb m} \quad\hbox{ and }\quad
  \Gamma^*(P^*_{\a mm}) \ = \ Q^{\a mm} \, .
\end{equation}
This yields \eqref{E:Q=Pb}.

To establish \eqref{E:Q=Pc} consider the $S^*$ version of \eqref{E:O0m}
$$\renewcommand{\arraystretch}{1.5}
\begin{array}{rcl}
\multicolumn{3}{l}{
  \Gamma^*(P^*_{\a mm})\,\Omega^m_\a \ - \  
  \Gamma^*(\bar P^*_{\a mm})\,\bar\O^m_\a
  \ - \ \Gamma^*(P^*_{mmm})\,\O^m_0 \hbox{\hspace{100pt}} 
  } \\
  \hbox{\hspace{80pt}} & \stackrel{\eqref{E:GammaPB}}{=} & 
    -\Gamma^*(P^*_{\a mm}\Lambda^\a_0 \,-\, \bar P^*_{\a mm}\bar\Lambda^\a_0
  \,-\, P^*_{mmm}\Lambda^m_0 ) \\
  & \stackrel{\eqref{E:O0m}}{=} &
  2\, \Gamma^*( \Lambda^0_m \,-\, \bar\Lambda^0_m ) 
  \ \stackrel{\eqref{E:GammaPB}}{=} \ -2\,(\O^0_m \,-\,\barO^0_m )\\
  & \stackrel{\eqref{E:O0m_alt}}{=} &
  Q^{\a mm}\,\O^m_\a \ - \ \bar Q^{\a mm}\,\barO^m_\a \ - \ P_{m^3}\,\O^m_0 \, .
\end{array} $$
In particular, 
\begin{equation} \label{E:P3*2}
  \Gamma^*(P^*_{m^3}) \ = \ P_{m^3} \, .
\end{equation}
This yields \eqref{E:Q=Pc}.
\end{proof}

\begin{remark*}
Note that \eqref{E:SDphi*} implies that $(\phi^2)^* \Omega = \Omega$.  Thus there exists $A \in \tGL(W)$ such that $\phi^2 = {L_A}_{|\cP_S}$.
\end{remark*}

\begin{example}
It is well-known that the hyperquadric (see Section \ref{S:hyperquadric}) is self-dual.  Following the notation of Section \ref{S:hyperquadric}, by an argument analogous to the proof of Theorem \ref{T:SD}, the self-duality of the hyperquadric is equivalent to the existence of a map $\phi : \cG \to \cG$ such that \eqref{E:SDphi*} holds.  Given $f = g \cdot \sff \in \cG$, with $g \in \tSU(1,m-1)$, define $\bar f = \bar g \cdot \sff$.  Then $\phi(f_0,f_\a,f_m) = (-\bar f_0 , \ti \bar f_\a , \bar f_m)$ defines a map $\cG \to \cG$ satisfying \eqref{E:SDphi*}.
\end{example}

%------------------------------------------------------------------------------
\subsection{The \boldmath $\w$--version \unboldmath } \label{S:selfdual_w}
%------------------------------------------------------------------------------

The equations \eqref{SE:Q=P} provide second and third-order conditions for a SCLC hypersurface to be self-dual.  If we shift from the $\Omega$ perspective to the $\w$ perspective we obtain $p$-th order conditions as follows.

The equations \eqref{E:W=(w)}, \eqref{E:P=(h)} and \eqref{E:Oa0} imply that we may solve \eqref{E:h2} for $\w^s_0$; that is, there exist functions $k^{st} = k^{ts} : \cF_S \to \bR$ such that $\w^s_0 = k^{st} \, \w^{n+1}_t$.  More generally, in analogy with Proposition \ref{P:h}, there exist functions $k^{s_1\cdots s_p t} : \cF_S \to \bR$, $p>1$, fully symmetric in their indices, and inductively defined by 
\begin{eqnarray*}
  k^{s_1 \ldots s_p t} \, \w_t^{n+1} & = &
  -\td k^{s_1 \ldots s_p} \, + \, (p-1) \, k^{s_1 \ldots s_p} \, \w^{n+1}_{n+1}
  \, + \, k^{s_1 \ldots s_p} \, \w^0_0 \hfill \\
  & &
  + \ p \,  
  \left\{ (p-2) \, k^{(s_1 \ldots s_{p-1}} \w^{s_p)}_{n+1} \,  
       - \, k^{t(s_1 \ldots s_{p-1}} \, \w_t^{s_p)}  \right\} \\
  & & 
  + \ \sum_{j=1}^{p-2} \, \tbinom{p}{j} \, 
      \left\{
      (j-1)\, k^{(s_1 \ldots s_j} \, k^{s_{j+1} \ldots s_p)} \, \w^0_{n+1}
      \ - \ k^{t(s_1 \ldots s_j} \, k^{s_{j+1} \ldots s_p)} \, \w^0_t 
       \right\} \, . 
\end{eqnarray*}
(Our convention is that $k^s = 0$.)

Let $\eta$ denote the Maurer-Cartan form on $\tGL(V^*,\sfJ)$.  It is straight-forward to check that the $\w$--version of \eqref{E:GammaPB} is 
$$
  \Gamma^*\eta^j_k \ = \ -\w^{\n(k)}_{\n(j)} \,,
$$ 
where $\n$ is the permutation of $\{1,\ldots,n\}$ defined by $\n(0) = n+1$, $\n(1) = n$, $\n(2\a) = 2\a+1$ and $\n^2=\tId$.  In particular, if $h^*$ denotes the differential invariants on $S^*$ given by Proposition \ref{P:h}, then
\begin{equation} \nonumber \label{E:PBh*}
  \Gamma^*(h^*_{s_1\cdots s_p}) \ = \ h^*_{s_1\cdots s_p} \circ \Gamma 
  \ = \ k^{\nu(s_1)\cdots \nu(s_p)} \ =: \ k^{s_1\cdots s_p}_\n \, .
\end{equation}
This equation generalizes (with respect to $\w$) the equations \eqref{E:II*}, \eqref{E:P3*1} and \eqref{E:P3*2}.  In particular, by working with respect to $\w$ we may strengthen Theorem \ref{T:SD} to the following.

\begin{theorem} \label{T:SDh}
A strongly $\bC$--linearly convex CR-hypersurface $S \subset \bP W$ is self-dual if and only if there exists a smooth map $\phi : \cF_S \to \cF_S$ such that 
$$
  \phi^*\w^j_k = -\w^{\n(k)}_{\n(j)} \, .
$$
In particular, the map $\phi$ must satisfy
$$
  \phi^*(h_{s_1\cdots s_p}) \ = \ k^{s_1\cdots s_p}_\n \, .
$$
\end{theorem}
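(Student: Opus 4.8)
The plan is to transcribe the proof of Theorem~\ref{T:SD} into the $\w$-picture, replacing $\Omega$, $\Lambda$, $\tGL(W)$, $\cP_S$ and \eqref{E:GammaPB} by $\w$, the Maurer--Cartan form $\eta$ on $\tGL(V^*,\sfJ)$, $\tGL(V,\sfJ)$, $\cF_S$ and the identity $\Gamma^*\eta^j_k=-\w^{\n(k)}_{\n(j)}$ of \S\ref{S:selfdual_w}. Throughout I write $\Gamma$ for the associated map $\tGL(V,\sfJ)\to\tGL(V^*,\sfJ)$; by (the proof of) Lemma~\ref{L:*} together with \eqref{E:df^m} it restricts to a diffeomorphism $\cF_S\to\cF_{S^*}$, strong $\bC$-linear convexity being precisely what makes $S^*$ a hypersurface. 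Suppose first that $S$ is self-dual, so there is a complex-linear isomorphism $A:W^*\to W$ with $A(S^*)=S$. As in Theorem~\ref{T:SD}, $A$ induces $\cL_A:\tGL(V^*,\sfJ)\to\tGL(V,\sfJ)$, $\z\mapsto A\circ\z$, and differentiating $\td(A\z_a)=\eta^b_a(A\z_b)$ yields the $\w$-analogue of \eqref{E:A*O=L}, namely $\cL_A^*\w=\eta$. Arguing as in Remark~\ref{R:LAFS} (now for $\cL_A$), $\cL_A(\cF_{S^*})=\cF_{A(S^*)}=\cF_S$, so $\phi:=\cL_A\circ\Gamma$ is a map $\cF_S\to\cF_S$, and $\phi^*\w^j_k=\Gamma^*\cL_A^*\w^j_k=\Gamma^*\eta^j_k=-\w^{\n(k)}_{\n(j)}$.

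Conversely, suppose $\phi:\cF_S\to\cF_S$ satisfies $\phi^*\w^j_k=-\w^{\n(k)}_{\n(j)}$, equivalently $\phi^*\w=\Gamma^*\eta$; note the right-hand side is an $\fgl(V^*,\sfJ)$-valued $1$-form obeying the Maurer--Cartan equation. Since $\Gamma$ is a diffeomorphism $\cF_S\to\cF_{S^*}$, the composite $\Psi:=\phi\circ\Gamma^{-1}:\cF_{S^*}\to\tGL(V,\sfJ)$ satisfies $\Psi^*\w=\eta_{|\cF_{S^*}}$. Fixing any complex-linear isomorphism $A_0:W^*\to W$, the restriction of $\cL_{A_0}$ to $\cF_{S^*}$ also pulls $\w$ back to $\eta_{|\cF_{S^*}}$, so the uniqueness clause of Theorem~\ref{T:Cartan} produces $B\in\tGL(V,\sfJ)$ with $\Psi=L_B\circ\cL_{A_0}=\cL_{B A_0}$; hence $\phi=\cL_A\circ\Gamma$ with $A:=B\circ A_0:W^*\to W$. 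Because $\phi$ takes $\cF_S$ into $\cF_S$ while $\cL_A$ carries $\cF_{S^*}$ onto $\cF_{A(S^*)}$, comparison of base points forces $A(S^*)=S$, so $S$ is self-dual.

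It remains to verify the ``in particular'' clause, which now applies to the $\phi$ produced in either direction since $\phi=\cL_A\circ\Gamma$ and $\phi^*\w=\Gamma^*\eta$. The invariants $h_{s_1\cdots s_p}$ on $\cF_S$ and $h^*_{s_1\cdots s_p}$ on $\cF_{S^*}$ are produced from $\w_{|\cF_S}$ and $\eta_{|\cF_{S^*}}$ by the same recursion of Proposition~\ref{P:h}, which uses only the structure equations and the relations $\w^{n+1}_0=0$, resp.\ $\eta^{n+1}_0=0$; hence $\phi^*\w=\Gamma^*\eta$ immediately gives $\phi^*h_{s_1\cdots s_p}=\Gamma^*h^*_{s_1\cdots s_p}$, and combining this with $\Gamma^*(h^*_{s_1\cdots s_p})=k^{s_1\cdots s_p}_\n$ from \S\ref{S:selfdual_w} yields exactly $\phi^*(h_{s_1\cdots s_p})=k^{s_1\cdots s_p}_\n$. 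The genuinely new input has already been isolated in \S\ref{S:selfdual_w} (the identities $\Gamma^*\eta^j_k=-\w^{\n(k)}_{\n(j)}$ and $\Gamma^*h^*=k_\n$), so the only point here requiring care is to run the Theorem~\ref{T:SD} argument on the full bundle $\cF_S$ instead of the reduction $\cP_S$: this needs the $\cF_S$-level forms of Lemma~\ref{L:*} and of \eqref{E:A*O=L}, and it is in the former that strong $\bC$-linear convexity enters, through the real linear independence of $\{\O^m_0,\O^m_\a,\barO^m_\a,\O^m_m,\barO^m_m\}$ on $\cF_S$.
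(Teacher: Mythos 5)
Your proposal is correct and follows essentially the paper's intended argument: the paper gives no separate proof of Theorem \ref{T:SDh}, presenting it as the transcription of the proof of Theorem \ref{T:SD} into the $\w$--picture via the identities $\Gamma^*\eta^j_k=-\w^{\n(k)}_{\n(j)}$ and $\Gamma^*(h^*)=k_\n$ of Section \ref{S:selfdual_w}, which is exactly what you carry out (your use of Cartan's uniqueness clause to recover $A$ matches the converse step in the proof of Theorem \ref{T:SD}).
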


\noindent By Proposition \ref{P:hcomplete} we have the following.

\begin{theorem}  \label{T:SD_h}
A strongly $\bC$--linearly convex analytic CR-hypersurface $S$ is self-dual if and only if there exist frames $e , \tilde e \in \cF_S$ such that $h(\tilde e) = k_\n(e)$. 
\end{theorem}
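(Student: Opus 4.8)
The plan is to read Theorem \ref{T:SD_h} off from Theorem \ref{T:SDh} and Proposition \ref{P:hcomplete} by means of the involution $\Gamma$ of \S\ref{S:selfdual_w}. Recall from \S\ref{S:selfdual_w} that $\Gamma : \tGL(W) \to \tGL(W^*)$ is a diffeomorphism with $\Gamma^2 = \tId$ which restricts to a bijection $\cF_S \to \cF_{S^*}$ (by Lemma \ref{L:*} and $\Gamma^2 = \tId$), and that it satisfies $\Gamma^*(h^*_{s_1\cdots s_p}) = k^{s_1\cdots s_p}_\n$; equivalently $h^*(\Gamma(e)) = k_\n(e)$ for every $e \in \cF_S$, where $h^*$ denotes the invariant system of $S^*$ furnished by Proposition \ref{P:h}. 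By Lemma \ref{L:*} the dual $S^* \subset \bP W^*$ is again a strongly $\bC$-linearly convex CR-hypersurface; it is connected because $\gamma : S \to S^*$ is continuous and surjective with $S$ connected, and analytic because $\gamma$ is analytic. Finally observe that the statement and proof of Proposition \ref{P:hcomplete} never use that the two ambient spaces coincide: the argument is carried out on the frame bundles and invokes only the left-invariance of the Maurer--Cartan form together with the local recovery (\S\ref{S:local_comp_w}) of an analytic hypersurface from its invariants. Hence it applies verbatim to a linear isomorphism $A : W^* \to W$ and the pair $S^* \subset \bP W^*$, $S \subset \bP W$: such an $A$ with $A(S^*) = S$ exists if and only if there are frames $\z \in \cF_{S^*}$ and $\tilde e \in \cF_S$ with $h^*(\z) = h(\tilde e)$.

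($\Rightarrow$) Suppose $S$ is self-dual. By Theorem \ref{T:SDh} there is a smooth map $\phi : \cF_S \to \cF_S$ with $\phi^*(h_{s_1\cdots s_p}) = k^{s_1\cdots s_p}_\n$. Fix any $e \in \cF_S$ and set $\tilde e := \phi(e)$. Evaluating the pulled-back functions at $e$ gives $h(\tilde e) = h(\phi(e)) = k_\n(e)$, as required.

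($\Leftarrow$) Suppose $h(\tilde e) = k_\n(e)$ for some $e, \tilde e \in \cF_S$. Set $\z := \Gamma(e) \in \cF_{S^*}$. Then $h^*(\z) = h^*(\Gamma(e)) = k_\n(e) = h(\tilde e)$, so by the version of Proposition \ref{P:hcomplete} recorded above there is a linear isomorphism $A : W^* \to W$ with $A(S^*) = S$; that is, $S$ is self-dual.

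The only slightly delicate point is the remark that Proposition \ref{P:hcomplete} applies to an isomorphism between the two isomorphic but formally distinct spaces $W$ and $W^*$, together with the verification that $S^*$ is a connected analytic strongly $\bC$-linearly convex hypersurface; both are immediate from Lemma \ref{L:*} and \S\ref{S:local_comp_w}, and everything else is a direct substitution. (One may equally bypass Theorem \ref{T:SDh} and argue both directions symmetrically through Proposition \ref{P:hcomplete}: $S$ is self-dual iff $h^*(\z) = h(\tilde e)$ for some $\z \in \cF_{S^*}$, $\tilde e \in \cF_S$, iff --- writing $\z = \Gamma(e)$ and using $h^* \circ \Gamma = k_\n$ --- $k_\n(e) = h(\tilde e)$ for some $e, \tilde e \in \cF_S$.)
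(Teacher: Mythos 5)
Your proposal is correct and follows essentially the route the paper intends: the paper derives Theorem \ref{T:SD_h} directly from Theorem \ref{T:SDh} together with Proposition \ref{P:hcomplete} and the relation $h^*\circ\Gamma = k_\n$, which is exactly your argument, with the useful extra care of noting that Proposition \ref{P:hcomplete} transfers verbatim to a pair of hypersurfaces in $\bP W^*$ and $\bP W$ and that $S^*$ inherits connectedness and analyticity. (Only a cosmetic point: you claim $\Gamma$ restricts to a bijection $\cF_S\to\cF_{S^*}$, but all you use --- and all Lemma \ref{L:*} gives directly --- is that $\Gamma$ maps $\cF_S$ into $\cF_{S^*}$, which suffices.)
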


%------------------------------------------------------------------------------
\bibliography{refs.bib}
\bibliographystyle{plain}
%------------------------------------------------------------------------------
\end{document}